\theoremstyle{definition} 
\newtheorem{coro}{Corollary}
\newtheorem{lem}{Lemma}
\newtheorem{prop}{Proposition}
\newtheorem{thm}{Theorem}
\newtheorem{rem}{Remark}
\newtheorem{defn}{Definition}
\newtheorem*{thm*}{Theorem}
\newtheorem*{lem*}{Lemma}
\newtheorem*{prop*}{Proposition}
\newtheorem*{coro*}{Corollary}
\newtheorem{exam}{Example}
\newtheorem*{exam*}{Example}
\newcommand{\nc}{\newcommand}
\nc{\gl}{\mathfrak{gl}}
\nc{\GL}{\mathfrak{GL}}
\nc{\g}{\mathfrak{g}}
\nc{\gh}{\widehat\g}
\nc{\fh}{\mathfrak{h}}
\nc{\fu}{\mathfrak{u}}
\nc{\la}{\lambda}
\nc{\al}{\alpha }
\nc{\be}{\beta }
\nc{\ve}{\varepsilon }
\nc{\om}{\omega }
\nc{\ta}{\theta}
\nc{\veps}{\varepsilon}
\nc{\ch}{{\mathop {\rm ch}}}
\nc{\Tr}{{\mathop {\rm Tr}\,}}
\nc{\Id}{{\mathop {\rm Id}}}
\nc{\x}{{\bf x}}
\nc{\bs}{{\bf s}}
\nc{\bz}{{\bf z}}
\nc{\bp}{{\bf p}}
\nc{\bfp}{{(\mathbf p)}}
\nc{\bfx}{{\mathbf x}}
\nc{\bfq}{{\bf q}}
\nc{\bq}{{\bf q}}
\nc{\br}{{\bf r}}
\nc{\bc}{{\bf c}}
\nc{\bt}{{\bf t}}
\nc{\bff}{{\bf f}}
\nc{\pa}{\partial}
\nc{\ld}{\ldots}
\nc{\cd}{\cdots}
\nc{\hk}{\hookrightarrow}
\nc{\To}{{\mathbb T}}
\nc{\T}{{\otimes}}
\nc{\gr}{\mathrm{gr}}
\nc{\ov}{\overline}
\nc{\cO}{\mathcal O}
\nc{\msl}{\mathfrak{sl}}
\nc{\msp}{\mathfrak{sp}}
\nc{\mgl}{\mathfrak{gl}}
\nc{\cS}{\mathcal S}
\nc{\U}{\mathrm U}
\nc{\V}{\EuScript V}
\nc{\cal}{\mathcal}
\nc{\bH}{\EuScript H}
\nc{\Res}{\mathrm{Res\ }}
\newcommand{\ol}{\overline}
\newcommand{\bZ}{{\mathbb Z}}
\title[Essential bases and toric degenerations]{Essential bases and toric degenerations arising from birational sequences}
\author{Xin Fang, Ghislain Fourier and Peter Littelmann}
\address{Xin Fang, Mathematisches Institut, Universit\"{a}t zu K\"{o}ln, Weyertal 86-90, D-50931, K\"{o}ln, Germany.}
\email{xinfang.math@gmail.com}
\address{Ghislain Fourier, Institut f\"{u}r Algebra, Zahlentheorie und Diskrete Mathematik, Leibniz Universit\"{a}t Hannover, Germany}
\email{fourier@math.uni-hannover.de}
\address{Peter Littelmann, Mathematisches Institut, Universit\"{a}t zu K\"{o}ln, Weyertal 86-90, D-50931, K\"{o}ln, Germany.}
\email{peter.littelmann@math.uni-koeln.de}
\begin{document}
\maketitle

\begin{abstract}  We present a new approach to construct $T$-equivariant flat toric degenerations of flag varieties and spherical varieties,
combining ideas coming from the theory of Newton-Okounkov bodies with ideas originally stemming from PBW-filtrations.
For each pair $(S,>)$ consisting of a birational sequence and a monomial order, we attach to the affine variety $G/\hskip -3.5pt/U$
a monoid $\Gamma=\Gamma(S,>)$. As a side effect we get a vector space basis $\mathbb B_{\Gamma}$ of $\mathbb C[G/\hskip -3.5pt/U]$,
the elements being indexed by $\Gamma$. The basis $\mathbb B_{\Gamma}$ has multiplicative properties very similar to those of the dual canonical basis.
This makes it possible to transfer the methods of Alexeev and Brion \cite{AB} to this more general setting, once one knows that the
{monoid} $\Gamma$ is finitely generated and saturated.
\end{abstract}

\section{Introduction}
During the recent years, several constructions of $T$-equivariant flat toric degenerations of flag varieties and spherical varieties
have been presented. To name a few examples: the degeneration of $SL_n/B$ by  Gonciulea and Lakshmibai in \cite{GL}; 
its interpretation by Kogan and Miller \cite{KM} using GIT methods; the approach of Caldero \cite{C1} and Alexeev-Brion \cite{AB},
which uses certain nice properties of the multiplicative behavior of the dual canonical basis, there
is the approach of Kaveh \cite{K1} and Kiritchenko \cite{Kir2}, which has been inspired by the theory of Newton-Okounkov bodies \cite{KK}, \cite{LM} (see \cite{And} for constructing toric degenerations using Newton-Okounkov bodies), and there is the approach by Feigin, Fourier and Littelmann in \cite{FFL1}, which has been inspired by a conjecture of Vinberg
concerning certain filtrations of the enveloping algebra of $\mathcal{U}(\mathfrak n^-)$. The starting point for this article was the aim to understand the
connection between \cite{Kir2} and \cite{FFL2} (see Example~\ref{kiritchenkoexample}).
\par
To be more precise, let us fix some notations. In the following let $G$ be a complex connected reductive algebraic group,
we assume that $G\simeq G^{ss}\times (\mathbb C^*)^r$, where the semisimple part $G^{ss}$ of $G$ is simply connected.
We fix a Cartan decomposition $\hbox{\rm Lie\,}G=\mathfrak n^-\oplus\mathfrak h\oplus \mathfrak n^+$, where
$\mathfrak h$ is a maximal torus and $\mathfrak b=\mathfrak h\oplus \mathfrak n^+$ is a Borel subalgebra of $\mathfrak g=\hbox{\rm Lie\,}G$.
Correspondingly let $U^-$ and $U^+$ be the maximal unipotent subgroups of $G$ having $\mathfrak n^-$ respectively
$\mathfrak n^+$ as Lie algebra, and let $T$ be a maximal torus in $G$ with Lie algebra $\mathfrak h$ and weight lattice $\Lambda$.
Denote by $\Lambda^+$ the subset of dominant weights. For $\lambda\in\Lambda^+$, let $V(\lambda)$ be the corresponding finite dimensional irreducible representation and $v_\lambda$ be a highest weight vector. For $\lambda\in\Lambda^+$, let $P_\lambda$ be the stabilizer  of the line $[v_\lambda]\in \mathbb P(V(\lambda))$.
\par
The aim of the present article is to exhibit an approach which, in a certain way, unifies the three approaches mentioned above.
Let $N$ be the number of positive roots of $G$. As a first step, we fix a sequence $S=(\beta_1,\ldots,\beta_N)$ of positive roots
(they do not have to be pairwise different!). We call the sequence a {\it birational sequence} for $U^-$ if the product map
of the associated unipotent root subgroups:
$$
\pi: U_{-\beta_1}\times\cdots \times U_{-\beta_N}\longrightarrow U^-
$$
is birational. A simple example for a birational sequence is the case where $S$ is just an enumeration of the set of all positive roots (the PBW-type case),
another example is the case where $S$ consists only of simple roots and $w_0=s_{\beta_1}\cdots s_{\beta_N}$ is a reduced decomposition of
the longest word in the Weyl group $W$ of $G$. For other examples see section~\ref{examplegenerating}. The second ingredient in
our approach is the choice of a weight function $\Psi:\mathbb N^N\rightarrow\mathbb N$ and a refinement of the associated
partial order ``$>_\Psi$'' to a monomial order ``$>$'' on $\mathbb N^N$.
We use the birational map $\pi$ and the total order ``$>$'' to associate to the variety $G/\hskip -3.5pt/U$ in two different
ways a monoid $\Gamma=\Gamma(S,>)\subset \Lambda \times\mathbb Z^N$. 
\par
Let $(Y,\mathcal{L})$ be a polarized $G$-variety and $R(Y,\mathcal{L})=\bigoplus_{n=0}^\infty H^0(Y,\mathcal{L}^n)$ be the graded algebra. With the help of the birational map $\pi$ and the total order "$>$", we introduce a filtration on $R(Y,\mathcal{L})$ (see section \ref{algfilt}); let $gr\,R(Y,\mathcal{L})$ be the associated graded algebra and $Y_0=\text{Proj}(gr\,R(Y,\mathcal{L}))$ be the corresponding projective variety. If moreover the polarized $G$-variety is spherical, let $P(Y,{\cal L})\subset \Lambda_{\mathbb R}$ and
$P(Y_0,{\cal L})\subset \Lambda_{\mathbb R}\times\mathbb R^{N}$ be the associated moment polytopes.
\par
The main results of the article can be summarized as follows. We fix a birational map $\pi$, a weight function $\Psi$ and a 
monomial order $>$ on $\mathbb N^N$ refining the partial order $>_\Psi$, and let $\Gamma=\Gamma(S,>)\subset \Lambda \times\mathbb Z^N$
be the corresponding monoid associated  to the variety $G/\hskip -3.5pt/U$.
\begin{thm*}\it
If the monoid $\Gamma$ is finitely generated and saturated, then:
\begin{itemize}
\item[{\it i)}] $\Gamma\subset \Lambda\times\mathbb Z^{N}$ is the set of integral points
of a rational polyhedral cone $\mathcal C\subset \Lambda_{\mathbb R}\otimes \mathbb R^N$, and $\hbox{Spec\,}\Gamma$ is a normal toric variety for the
torus $T\times(\mathbb C^*)^N$.
\item[{\it ii)}] Given an affine spherical $G$-variety $Y$, there exists a flat and $T$-equivariant degeneration of $Y$ into a $T\times (\mathbb C^*)^N$-{normal}
toric variety $Y_0$
such that the cone associated to $Y_0$ is the intersection of $\mathcal C$ with $\mathcal W\times \mathbb R^N$, where
$\mathcal W \subset \Lambda_{\mathbb R}$ is the weight cone of $Y$.
\item[{\it iii)}] Given a polarized $G$-variety $(Y,\cal L)$,  there exists a family of $T$-varieties $\psi : {\cal Y}\rightarrow \mathbb A^1$, where ${\cal Y}$ is a
normal variety, together with divisorial sheaves ${\cal O}_{\cal Y}(n)$ ($n\in\mathbb  Z$), such that:
\begin{itemize}
\item $\psi$ is projective and flat.
\item The family $\psi$ is trivial with fiber $Y$ over the complement of $0$ in $\mathbb A^1$, and ${\cal O}_{\cal Y}(n)\vert_Y \simeq {\cal L}^n$
for all $n$.
\item The fiber of $\psi$ at $0$ is isomorphic to $Y_0$, and ${\cal O}_{\cal Y}(n)\vert_{Y_0} \simeq {\cal L}_0^{(n)}$ for all $n$.
\item If $Y$ is spherical, then $Y_0$ is a normal toric variety for the torus $T\times(\mathbb C^*)^N$.
\end{itemize}
\item[{\it iv)}] Given a polarized spherical $G$-variety $(Y,\cal L)$, the projection $p : \Lambda_{\mathbb R}\times \mathbb R^N \rightarrow \Lambda_{\mathbb R}$ onto the first factor
restricts to a surjective map
$p : P(Y_0,{\cal L}) \rightarrow P(Y,{\cal L})$,
with fiber over any $\lambda\in\Lambda^+_{\mathbb R}$
being the essential polytope $P(\lambda)$ (\ref{esspoly}). In particular, for $\lambda\in\Lambda^+$, the limit variety $Y_0$ of the flag variety $Y=G/P_\lambda$ is a normal toric variety under $(\mathbb C^*)^N$, whose
moment polytope is the essential polytope $P(\lambda)$.
\end{itemize}
\end{thm*}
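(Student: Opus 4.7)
Part (i) is essentially Gordan's lemma. Since by hypothesis $\Gamma \subset \Lambda \times \mathbb{Z}^N$ is finitely generated and saturated, it coincides with the set of lattice points of the rational polyhedral cone $\mathcal{C}$ spanned by any finite generating set, and saturation makes the monoid algebra $\mathbb{C}[\Gamma]$ integrally closed. Since $\Gamma$ lives in the character lattice of $T \times (\mathbb{C}^*)^N$, $\mathrm{Spec}\,\mathbb{C}[\Gamma]$ is a normal affine toric variety for this torus; no geometric input is required beyond the hypothesis on $\Gamma$.

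The substantial content lies in (ii) and (iii), which I would obtain by transferring the Alexeev--Brion argument \cite{AB} from the dual canonical basis to the basis $\mathbb{B}_\Gamma$. The key technical step is to show that the filtration on $R(Y,\mathcal{L})$ introduced in Section~\ref{algfilt} has, as associated graded algebra, precisely the monoid algebra $\mathbb{C}[\Gamma \cap (\mathcal{W} \times \mathbb{Z}^N)]$ in the affine spherical case (and an analogous monoid algebra in the projective case). This rests on the "dual-canonical-like" multiplicative properties of $\mathbb{B}_\Gamma$ advertised in the abstract: products of basis elements must have a well-controlled leading term with respect to the monomial order $>$, so that passing to $\mathrm{gr}$ kills all mixed terms and leaves pure monoid multiplication. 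Once this identification is available, the family is produced by the standard Rees construction: set $\mathcal{R} = \bigoplus_{k\in\mathbb{Z}} \mathcal{F}^{k} R(Y,\mathcal{L})\, t^{-k} \subset R(Y,\mathcal{L})[t,t^{-1}]$ and let $\mathcal{Y} = \mathrm{Proj}_{\mathbb{A}^1}\mathcal{R}$. Flatness over $\mathbb{A}^1$ is automatic from $\mathbb{C}[t]$-torsion freeness; the fiber over $t\neq 0$ is $Y$ by inverting $t$, and the fiber at $t=0$ is $\mathrm{Proj}(\mathrm{gr}\,R(Y,\mathcal{L})) = Y_0$. Normality of $\mathcal{Y}$ is then forced by normality of the special fiber $Y_0$ (which is the toric variety of the saturated cone $\mathcal{C}\cap(\mathcal{W}_{\mathbb R}\times \mathbb{R}^N)$) combined with flatness and irreducibility of the generic fiber; $T$-equivariance of the family is immediate because the filtration is a filtration of $\Lambda$-graded modules.

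For (iv), one unwinds the moment-polytope description: $P(Y_0,\mathcal{L})$ is the closed convex hull of $\bigcup_n \frac{1}{n}\mathrm{supp}_{T\times(\mathbb{C}^*)^N}\! H^0(Y_0,\mathcal{L}_0^n)$ inside $\Lambda_{\mathbb R}\times \mathbb{R}^N$. The projection $p$ forgets the $\mathbb{R}^N$-coordinate, so $p(P(Y_0,\mathcal{L}))$ is the convex hull of the $T$-support of the sections, which equals $P(Y,\mathcal{L})$ because the $G$-module structure of $R(Y,\mathcal{L})$ is preserved (only the multiplication is degenerated). By construction of $\Gamma$, the fiber of $p$ over a dominant rational $\lambda$ consists exactly of the exponents of the $\mathbb{B}_\Gamma$-vectors in the $V(\lambda)^*$-isotypic component, which is by definition the essential polytope $P(\lambda)$ of (\ref{esspoly}). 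The flag variety assertion is then the special case $Y = G/P_\lambda$, where $P(Y,\mathcal{L})$ collapses to the ray $\mathbb{R}_{\geq 0}\lambda$ and only the essential polytope survives.

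The main obstacle, as indicated above, is establishing the associated-graded identification in (ii)--(iii): one has to show that the leading-term behavior of products $b\cdot b'$ of elements of $\mathbb{B}_\Gamma$ really implements monoid addition in $\Gamma$, so that the degenerated algebra is the monoid algebra of $\Gamma$ rather than some quotient. Once this compatibility is in place, parts (ii) and (iii) follow the Alexeev--Brion scheme with only cosmetic changes, and (iv) is a formal consequence of the construction of $\Gamma$.
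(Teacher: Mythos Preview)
Your outline follows the paper's approach closely and correctly identifies the crux: Lemma~\ref{sc1} (the leading-term multiplicativity of $\mathbb B_\Gamma$) is exactly what makes $gr\,R$ a monoid algebra, after which Alexeev--Brion's machinery applies. Two points, however, are glossed over and one of them is a genuine technical step.

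\textbf{The torus in (i).} Saying that $\Gamma$ ``lives in'' the character lattice of $T\times(\mathbb C^*)^N$ only gives a toric variety for some subtorus. To get the full torus you need $\langle\Gamma\rangle_{\mathbb Z}=\Lambda\times\mathbb Z^N$, and this is not formal: the paper proves it (Corollary~\ref{rank}) by using the birational map $\varphi:\mathcal Z_S\to G/\hskip-3.5pt/U$ and the identification of $\Gamma$ with the valuation monoid, so that every lattice point is a difference of valuations of regular functions. Without this input your (i) is incomplete.

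\textbf{Linearizing the filtration in (ii)--(iii).} You write the Rees algebra as $\bigoplus_{k\in\mathbb Z}\mathcal F^k R\,t^{-k}$, but the filtration constructed in Section~\ref{algfilt} is indexed by $(\Lambda^+\times\mathbb N^N,>_{alg})$, a \emph{partially} ordered monoid, not by $\mathbb Z$. Passing from this to an $\mathbb N$-filtration so that the Rees construction yields a family over $\mathbb A^1$ requires producing a linear form $e:\Lambda_{\mathbb R}\times\mathbb R^N\to\mathbb R$ that takes non-negative integral values on $\Gamma$ and is strictly monotone for $>_{alg}$ on any prescribed finite set. The paper isolates this as Lemma~\ref{straight}; its proof is not hard but is not automatic either, because $>_{alg}$ mixes the dominance order on $\Lambda^+$ with the \emph{reverse} of the $\Psi$-weighted lex order on $\mathbb N^N$, so one has to balance three linear pieces (one for $\Lambda$, one for $\Psi$, one for the lex refinement) with suitably large coefficients. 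This is the place where finite generation of $\Gamma$ is actually used. Your sketch jumps directly to the Rees algebra as if this step were already done.

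\textbf{Minor correction in (iv).} For $Y=G/P_\lambda$ with $\mathcal L=\mathcal L_\lambda$, the moment polytope $P(Y,\mathcal L)$ is the single point $\{\lambda^*\}$, not the ray $\mathbb R_{\ge 0}\lambda$; the essential polytope then sits over that point. The paper's argument also keeps track of the $\lambda\leftrightarrow\lambda^*$ duality coming from $H^0(G/P_\lambda,\mathcal L_\lambda^n)\simeq V(n\lambda)^*$, which your sketch elides.
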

\begin{exam*}
{\it The reduced decomposition case.\/}
For details see section~\ref{stringconeproof}:
fix a reduced decomposition $\underline{w}_0=s_{i_1}\cdots s_{i_N}$ of the longest word $w_0$ in $W$,
let ${C}_{\underline{w}_0}\subset \mathbb R^N$ be the associated string cone defined in \cite{Li1,BZ}
and set $S=({\alpha_{i_1}},\ldots, {\alpha_{i_N}})$. Let $\Psi:\mathbb N^N\rightarrow \mathbb N$ be the height weighted degree
function $\mathbf m\mapsto \sum_{i=1}^N m_i ht(\beta_i)$. We fix on $\mathbb N^N$ the associated $\Psi$-weighted opposite lexicographic order.
The monoid $\Gamma$ is finitely generated and {saturated}, and the cone $\mathcal C$ is the cone
${\mathcal C}_{\underline{w}_0}\subset \Lambda_{\mathbb R}\times {C}_{\underline{w}_0}$ defined by the weight inequalities
in \eqref{weightequation}. So we recover the case studied by Alexeev, Brion \cite{AB} and Kaveh \cite{K1}.
\end{exam*}
\begin{exam*}
{\it Lusztig's PBW-case.\/} For details see section~\ref{Lusztig}: fix a reduced decomposition $\underline{w}_0=s_{i_1}\cdots s_{i_N}$ of the longest word
$w_0$ in $W$ and let $S=(\beta_1,\ldots,\beta_N)$ be the enumeration of the positive roots
associated to the decomposition, i.e., $\beta_k=s_{i_1}\cdots s_{i_{k-1}}(\alpha_k)$ for $k=1,\ldots,N$. Let $\Psi:\mathbb N^N\rightarrow \mathbb N$
be the height weighted degree function as above, we fix on $\mathbb N^N$ the associated  $\Psi$-weighted opposite right
lexicographic order. The monoid $\Gamma$ is finitely generated and {saturated}, the cone $\mathcal C$ can be described using Lusztig's piecewise linear combinatorics
in the {\tt ADE} case respectively the piecewise linear combinatorics developed in \cite{BZ} in the general case.
\end{exam*}
\begin{exam*}
 {\it The homogeneous PBW-case.\/}
For details see section~\ref{homordercase}: let $G=SL_{n+1}$ or $Sp_{2n}$. Let $S=(\beta_1,\dots,\beta_N)$ be a
good ordering of the positive roots and let $\Psi:\mathbb N^N\rightarrow \mathbb N$ be the degree function
$\mathbf m\mapsto \sum_{i=1}^N m_i$. We fix on $\mathbb N^N$ the induced homogeneous right lexicographic order.
The monoid $\Gamma$ is finitely generated and {saturated}, the cone $\mathcal C$
is the cone described in \cite{FFL1}, see also section~\ref{homordercase}.
\end{exam*}
It remains to justify the claim that  the method unifies the three approaches mentioned at the beginning.
Indeed, connections have been observed before: the relation between
Newton-Okounkov bodies and the approach via filtrations has been described in \cite{FFL1} and, vice versa,
Kiritchenko recovers in \cite{Kir2} the polytopes described in \cite{FFL2} as Newton-Okounkov bodies.
The connection between the dual canonical basis, string cones and Newton-Okounkov bodies
is described in \cite{K1}. The aim of this article is to develop for flag varieties (or, more general, for spherical varieties)
an approach such that all three aspects are inherent to the approach from the very beginning.
\par
As a first step we use in section~\ref{SValuationsemigroup} the birational map $\pi$ and the monomial order ``$>$'' to define
a $\Lambda\times \mathbb Z^N$-valued valuation $\nu$ on the field of rational functions $\mathbb C(G/\hskip -3.5pt/U)$
and associate to $G/\hskip -3.5pt/U$ the valuation monoid ${\cal V}(\mathbb C[G/\hskip -3.5pt/U])$. This construction
has been inspired by the theory of Newton-Okounkov bodies.
\par
In the sections \ref{essential1} and \ref{filtandsemi}, we provide a different way to construct the monoid.
The birationality of $\pi$ implies on the level of enveloping algebras that the set of ordered monomials of root vectors
$\{ f_{\beta_1}^{m_1}\cdots f_{\beta_N}^{m_N}\mid \mathbf m=(m_1,\ldots,m_N)\in\mathbb N^N\}$
 forms a spanning set for $\mathcal{U}(\mathfrak n^-)$ as a vector space. In combination  with the fixed total order ``$>$" on $\mathbb N^N$,
we define an increasing filtration of $\mathcal{U}(\mathfrak n^-)$ by finite dimensional subspaces such that the subquotients
are of dimension smaller than or equal to one. As in \cite{FFL1,FFL2,FFL3},
this filtration can be used to associate to $G/\hskip -3.5pt/U$ the monoid $\Gamma$ of {\it essential}
monomials. Essential means in this context that the class of the monomial occurs as
a basis vector of a one dimensional subquotient of the filtration.
\par
We prove in section~\ref{dualessentialsection} that the {\it valuation monoid} $\nu(\mathbb C[G/\hskip -3.5pt/U])$ and the {\it essential monoid} $\Gamma$
coincide. Hence we get a monoid
$\Gamma=\Gamma(S,>)$ attached to $G/\hskip -3.5pt/U$ which depends on the choice of a birational sequence $S$ and the choice of the monomial order ``$>$''.
The most important property proved in this section concerns the third aspect: the basis of $\mathcal{U}(\mathfrak n^-)$ given by the essential monomials
gives via duality rise to a basis of $\mathbb C[G/\hskip -3.5pt/U]$ which we call the {\it dual essential basis}.
It turns out that this vector space basis of $\mathbb C[G/\hskip -3.5pt/U]$ has multiplicative properties similar to those of the dual canonical
basis. So once one knows that $\Gamma$ is finitely generated and {saturated}, then one can view $\hbox{Spec\,}\Gamma$ as a toric variety for the
torus $T\times(\mathbb C^*)^N$, which can be obtained as a flat $T$-equivariant degeneration of $G/\hskip -3.5pt/U$.
One can transfer the approach of Alexeev and Brion \cite{AB} without any change
(up to some details) to this more general setting and prove the same results about $T$-equivariant flat toric degenerations
of spherical varieties and moment polytopes, see sections~\ref{AffG} and \ref{polytopesection}. In section~\ref{Qcf} we discuss the quasi-commutative case, i.e.,
the associated graded vector space $gr\,\mathcal{U}(\mathfrak n^-)$ has in addition an algebra structure
making it isomorphic to $S(\mathfrak n^-)$. If $\Gamma$ is finitely generated and saturated, then the toric variety $Y_0$
inherits in the quasi-commutative case an action of a commutative unipotent group $\mathbb G_a^N$.
\par
As a summarization, the new approach combines ideas coming from the theory of Newton-Okounkov bodies
with ideas originally stemming from PBW-filtrations. As a side effect we get a vector space basis of $\mathbb C[G/\hskip -3.5pt/U]$
which has multiplicative properties very close to those of the dual canonical basis. This makes it possible to
transfer the methods of Alexeev and Brion \cite{AB} to this more general setting, once we know that the
monoid $\Gamma$ is finitely generated and {saturated}.

We conjecture, that $\Gamma$ is always finitely generated. As the Example in \cite{BFF15} concerning the Lie algebra of type $\tt G_2$ shows, $\Gamma$ will not be saturated in general.
But note that the monoid $\Gamma$ and its associated Newton-Okounkov body may provide interesting information even if 
$\Gamma$ is not necessarily finitely generated. For an application where it is not known whether the
monoid is finitely generated or not see \cite{FLP, K2}. In \cite{FLP}, the associated Newton-Okounkov body is used to determine the Gromov width of coadjoint orbits.

\textbf{Acknowledgements.} The work of X.F. is supported by the Alexander von Humboldt Foundation.
G.F. would like to thank the University of Cologne for its hospitality. The work of G.F. and P.L. was partially supported by the DFG-Schwerpunkt SPP 1388.

\section{Glossary}
We summarize the important notations and conventions used throughout the paper in the following glossary:
\begin{itemize}
\item[-] $G$: a connected complex reductive algebraic group with semi-simple part $G^{ss}$;
\item[-] $B$, $T$: a Borel subgroup and a torus contained in the Borel subgroup;
\item[-] $U$, $U^-$: the unipotent radical in a fixed Borel subgroup $B$, and its opposite;
\item[-] $G/\hskip -3.5pt/U:=\text{Spec}(\mathbb{C}[G]^U)$: the $U$-invariant points;
\item[-] $\mathfrak{n}^-$: Lie algebra of $U^-$; $\mathfrak{b}$: Lie algebra of $B$; $\mathfrak{t}$: Lie algebra of $T$;
\item[-] $\mathcal{U}(\mathfrak n^-)$: the universal enveloping algebra of the Lie algebra $\mathfrak{n^-}$;
\item[-] $\Phi$: root system of $\mathfrak{g}$; $\Phi^+$: set of positive roots in $\mathfrak{g}$ with respect to $\mathfrak{b}$; $\mathcal{R}$: root lattice of $\mathfrak{g}$; $\mathcal{R}^+$: semi-group of $\mathcal{R}$ generated by positive roots;
\item[-] $\Lambda$: weight lattice of $G$; $\Lambda^+$: monoid of dominant integral weights; $\Lambda^\dagger$: see section \ref{SValuationsemigroup};
\item[-] $V(\lambda)$: finite dimensional irreducible representation of $G$ of highest weight $\lambda$; $v_\lambda$: a highest weight vector in $V(\lambda)$;
\item[-] $\omega_1,\ldots,\omega_n$: the set of fundamental weights of $G$;
\item[-] $S$: a birational sequence of $G$ (Definition \ref{orderedgenerating});
\item[-] $Z_S$: parametrizing affine space (Definition \ref{Def:ZS}); $\mathcal{Z}_S:=Z_S\times T$;
\item[-] ASM, an affine saturated monoid (Definition \ref{Def:ASM});
\item[-] $es(\mathfrak{n}^-)$: essential monoid of $\mathfrak{n}^-$; $\Gamma(\lambda)$: essential monoid associated to $\lambda$; $\Gamma$: global essential monoid; $\mathcal{C}_{(S,>)}$: the essential cone associated to the birational sequence $S$ and the chosen lexicographic order $>$ (Definition \ref{Def:Ess}).
\end{itemize}

\section{The variety $G/\hskip -4.5pt/U$}\label{GnachU}
\subsection{Some notation} Let $G$ be a connected complex reductive algebraic group
isomorphic to $G^{ss}\times (\mathbb C^*)^r$, where $G^{ss}$ denotes the semisimple part of $G$,
and $G^{ss}$ is simply connected. Let $\hbox{Lie\,}G=\mathfrak g$ be its Lie algebra.
We fix a Cartan decomposition $\mathfrak g=\mathfrak n^-\oplus\mathfrak b$, where
$\mathfrak b$ is a Borel subalgebra with maximal torus $\mathfrak t$ and nilpotent radical
$\mathfrak n^+$.  Let $\Phi$ be the root system of $\mathfrak g$ and denote by $\Phi^+$ the set of
positive roots with respect to the choice of $\mathfrak b$. We denote by ${\cal R}$ the root lattice and by
${\cal R}^+$ the semigroup generated by the positive roots. For $\beta\in \Phi^+$ let
$f_\beta$ be a non-zero root vector in $\mathfrak n^-$ of weight $-\beta$.

Let $B\subset G$ be the Borel subgroup such that $\hbox{Lie\,}B=\mathfrak b$, let $U\subset B$ be its unipotent radical
and denote by $T\subset B$ the maximal torus such that $\hbox{Lie\,}T=\mathfrak t$.
Let $\Lambda$ be the weight lattice of $T$ and denote by $\Lambda^+$ the subset of dominant integral weights.
We write ``$\succ_{wt}$'' for the usual partial order on $\Lambda$, i.e., $\la\succ_{wt}\mu$ if and only if $\la-\mu\in{\cal R}^+$.
We denote by $B^-$ the opposite Borel subgroup and let $U^-$ be its unipotent radical, so $\hbox{Lie\,}U^-=\mathfrak n^-$.
Throughout the paper, $N$ denotes the number of positive roots, $n$ is the rank of $\Phi$ and $n+r$ is the rank of $G$. For $\lambda\in\Lambda^+$, let $V(\lambda)$ be the finite dimensional irreducible representation of $G$ associated to $\lambda$ and $v_\lambda$ be a highest weight vector.

\subsection{The variety $G/\hskip -4pt/U$} One has a natural $G\times G$-action on $G$ by left and right multiplication.
The ring $\mathbb C[G]^{1\times U}$ of $1\times U$-invariant functions (in the following we write just $\mathbb C[G]^{U}$) is
finitely generated and normal, so the variety $G/\hskip -3.5pt/U:=\hbox{Spec}(\mathbb C[G]^U)$ is a normal
(but in general singular) affine variety.
Since $1\times T$ normalizes $1\times U$, $\mathbb C[G]^U$ is a natural $G\times T$-algebra.
As a $G$-representation (resp. $G\times T$-representation) its coordinate ring is isomorphic to
\begin{equation}\label{coordinateGnachU}
\mathbb C[G/\hskip -3.5pt/U]\simeq \bigoplus_{\la\in\Lambda^+} V(\la)^*\simeq \bigoplus_{\la\in\Lambda^+} V(\la)^*\otimes v_\la.
\end{equation}
So  $(1,t)\in G\times T$ acts on $V(\la)^*\simeq V(\la)^*\otimes v_\la$ by the scalar $\la(t)$.

The variety $G/\hskip -3.5pt/U$ is endowed with a natural $G$-action, making it into a spherical variety, i.e., a variety with a dense $B$-orbit.
One has a canonical dominant map $\psi:G\rightarrow G/\hskip -3.5pt/U$, inducing an inclusion $G/U\hookrightarrow G/\hskip -3.5pt/U$
and a birational orbit map  $o:B^-\rightarrow G/\hskip -3.5pt/U$, $b\mapsto b.\bar{1}$ (where $\bar{1}=\psi(1)$).

An element $\phi\in V(\la)^*$ can be seen as a function on the open and dense subset $G/U\hookrightarrow G/\hskip -3.5pt/U$
as follows: for a class $\bar g\in G/U$ let $g\in G$ be a representative. Then
\begin{equation}\label{coordinatefunction}
\phi\vert_{G/U}: G/U\rightarrow \mathbb C,\quad \bar g\mapsto \phi(g v_\la).
\end{equation}

\section{Birational sequences and the variety $Z_S$}
For a root vector $f_{\beta}\in {\mathfrak g}_{-\beta}$,
$\beta\in\Phi^+$, let  $U_{-\beta}:=\{\exp(s f_{\beta})\mid s\in \mathbb C\}$
be the corresponding root subgroup of $G$.
It is well known that the product map $\prod_{\beta\in\Phi^+}U_{-\beta}\rightarrow U^-$
is an isomorphism of affine $T$-varieties for any chosen ordering on the set of positive roots.

\subsection{Birational sequences}
Fix a sequence of positive roots $S=({\beta_1},\ldots,{\beta_N})$.
We make no special assumption on this sequence, for example there may be repetitions, see
Example~\ref{monomialszwo}. Let $\mathbb T$ be the torus $(\mathbb C^*)^N$,
we write $\texttt{t}=(\texttt{t}_1,\ldots,\texttt{t}_N)$ for an element of $\mathbb T$.
\begin{defn}\label{Def:ZS}
The variety  ${Z}_S$ is the affine space $\mathbb A^N$ endowed with the following $T\times\mathbb T$-action:
$$
\forall (t,\texttt{t})\in T\times \mathbb T:(t,\texttt{t})\cdot(z_1,\ldots,z_N):=
(\texttt{t}_1 \beta_1(t)^{-1}z_1,\ldots, \texttt{t}_N \beta_N(t)^{-1}z_N).
$$
\end{defn}
\begin{defn} \label{orderedgenerating}
We call $S$ a {\it birational sequence} for $U^-$ if the product map $\pi$ is birational:
\begin{equation}\label{birational1}
\pi:Z_S\rightarrow U^-,\quad (z_1,\ldots,z_N)\mapsto \exp(z_1f_{\beta_1})\cdots \exp(z_Nf_{\beta_N}).
\end{equation}
\end{defn}

\begin{rem} It would be interesting to know whether in this setup, $\pi$ being dominant implies that $\pi$ is birational.
\end{rem}

We want to analyze the connection between this geometric condition and properties of the enveloping algebra.
For a given sequence $S=({\beta_1},\ldots,{\beta_N})$ of positive roots set
$$
\begin{array}{rl}
\mathcal{U}(\mathfrak n^-)^i&=\langle f_{\beta_i}^{m_i}\cdots f_{\beta_N}^{m_N} \mid \mathbf m=(m_i,\ldots,m_N)\in \mathbb N^{N-i+1}\rangle\subseteq \mathcal{U}(\mathfrak n^-);\\
Y_i&= \{\exp(z_if_{\beta_i})\cdots \exp(z_Nf_{\beta_N})\mid (z_i,\ldots,z_N)\in\mathbb C^{N-i+1}\}\subseteq U^-.
\end{array}
$$
\begin{lem}\label{one}\it Let $\la$ be a dominant weight and let $v_\la\in V(\la)$ be a highest weight vector. The linear span
$\langle Y_iv_\la\rangle= \langle\exp(z_if_{\beta_i})\cdots \exp(z_Nf_{\beta_N})v_\la\mid (z_i,\ldots,z_N)\in\mathbb C^{N-i+1} \rangle$
is equal to $\mathcal{U}(\mathfrak n^-)^i.v_\la$.
\end{lem}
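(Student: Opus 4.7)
The plan is to observe that for each dominant $\lambda$, each root vector $f_{\beta_k}$ acts locally nilpotently on $V(\lambda)$ (since $V(\lambda)$ has finitely many weights and applying $f_{\beta_k}$ shifts the weight by $-\beta_k$), so $\exp(z_k f_{\beta_k})v = \sum_{m\ge 0} \tfrac{z_k^m}{m!}f_{\beta_k}^m v$ is a polynomial in $z_k$ whenever it is evaluated on a vector of $V(\lambda)$. Consequently
\[
\exp(z_i f_{\beta_i})\cdots\exp(z_N f_{\beta_N})v_\lambda \;=\; \sum_{\mathbf m\in\mathbb N^{N-i+1}} \frac{z_i^{m_i}\cdots z_N^{m_N}}{m_i!\cdots m_N!}\, f_{\beta_i}^{m_i}\cdots f_{\beta_N}^{m_N} v_\lambda,
\]
and the sum on the right is finite because $f_{\beta_i}^{m_i}\cdots f_{\beta_N}^{m_N}v_\lambda$ can be nonzero only when $\lambda - \sum_k m_k\beta_k$ is a weight of $V(\lambda)$, and there are only finitely many such tuples $\mathbf m$.

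The inclusion $\langle Y_i v_\lambda\rangle \subseteq \mathcal U(\mathfrak n^-)^i\cdot v_\lambda$ is then immediate: every vector of the form $\exp(z_i f_{\beta_i})\cdots\exp(z_N f_{\beta_N})v_\lambda$ is visibly a $\mathbb C$-linear combination of the monomial vectors $f_{\beta_i}^{m_i}\cdots f_{\beta_N}^{m_N}v_\lambda$, which span $\mathcal U(\mathfrak n^-)^i\cdot v_\lambda$ by definition.

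For the reverse inclusion I would use a Vandermonde / polynomial interpolation argument. Let $D$ be a degree bound so that only monomials $z_i^{m_i}\cdots z_N^{m_N}$ with each $m_k\le D$ occur in the expansion above. Fix pairwise distinct scalars $c_0,\ldots,c_D\in\mathbb C$. Evaluating the identity at the $(D+1)^{N-i+1}$ points $(z_i,\ldots,z_N)\in\{c_0,\ldots,c_D\}^{N-i+1}$ produces a system of linear equations whose coefficient matrix is the tensor product of univariate Vandermonde matrices in the $c_j$'s, and is therefore invertible. Inverting this system expresses each individual vector $f_{\beta_i}^{m_i}\cdots f_{\beta_N}^{m_N}v_\lambda$ as an explicit $\mathbb C$-linear combination of the vectors $\exp(c_{j_i}f_{\beta_i})\cdots\exp(c_{j_N}f_{\beta_N})v_\lambda\in \langle Y_i v_\lambda\rangle$, proving $\mathcal U(\mathfrak n^-)^i\cdot v_\lambda\subseteq \langle Y_i v_\lambda\rangle$.

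There is no real obstacle: the statement amounts to saying that the $\mathbb C$-linear span of the values of a $V(\lambda)$-valued polynomial map on $\mathbb C^{N-i+1}$ coincides with the span of its coefficient vectors, which is a standard application of Vandermonde invertibility. The only point that needs to be invoked is finite-dimensionality of $V(\lambda)$, which is what reduces the formal power series $\exp(z_k f_{\beta_k})v_\lambda$ to a genuine polynomial and bounds the degrees uniformly.
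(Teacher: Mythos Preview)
Your proof is correct and takes a genuinely different route from the paper's. The paper argues by downward induction on $i$: for the base case $i=N$ it notes that the nonzero vectors among $v_\lambda, f_{\beta_N}v_\lambda, f_{\beta_N}^2 v_\lambda,\ldots$ are linearly independent (they have distinct $T$-weights), so the span of the one-parameter curve $s\mapsto U_{-\beta_N}(s)v_\lambda$ is the span of these coefficient vectors. For the inductive step it uses that $\mathcal U(\mathfrak n^-)^{j+1}v_\lambda$ is $T$-stable, picks a basis of $T$-eigenvectors $v_\mu$, and repeats the same one-variable argument for $s\mapsto U_{-\beta_j}(s)v_\mu$. Thus the paper peels off one root subgroup at a time, invoking weight linear independence in place of an explicit Vandermonde inversion.

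Your argument bypasses the induction and the $T$-structure entirely: you expand the full product at once as a finite polynomial in $(z_i,\ldots,z_N)$ and invert a tensor product of Vandermonde matrices. This is slightly more elementary, since it uses only the finite-dimensionality of $V(\lambda)$ (to get a uniform degree bound~$D$) and nothing about weights. The paper's approach, by contrast, keeps the representation-theoretic structure visible and avoids writing down any explicit matrix; it is essentially the same Vandermonde fact applied one variable at a time, with weight independence making each step transparent.
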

\begin{proof}
Obviously we have  $\langle Y_i.v_\la\rangle\subseteq \mathcal{U}(\mathfrak n^-)^i.v_\la$. We use
an inductive procedure to show equality. Recall that $U_{-\beta_N}(s).v_\la=v_\la+sf_{\beta_N}v_\la+\frac{s^2}{2!}f^2_{\beta_N}v_\la+\ldots$.
The non-zero vectors in $\{v_\la, f_{\beta_N}v_\la,f^2_{\beta_N}v_\la,\ldots\}$ are linearly independent,
so the claim holds for $i=N$.
Assume that the claim holds for all $i>j$, where $j<N$. Note that $Y_{j+1}\subseteq Y_j$
and hence $\langle Y_{j+1}.v_\la\rangle\subseteq\langle Y_j.v_\la\rangle$, so by induction
$\langle Y_j.v_\la\rangle=\langle  U_{-\beta_j}\mathcal{U}(\mathfrak n^-)^{j+1}.v_\la\rangle$. Now
$\mathcal{U}(\mathfrak n^-)^{j+1}.v_\la$ is a $T$-stable subspace
and admits a basis of $T$-eigenvectors. If $v_\mu\in \langle Y_{j+1}.v_\la\rangle$ is a $T$-eigenvector of weight $\mu$,
then
$
U_{-\beta_j}(s).v_\mu=v_\mu+sf_{\beta_j}v_\mu+\frac{s^2}{2!}f^2_{\beta_j}v_\mu+\ldots.
$
Since the non-zero vectors
in the set $\{v_\mu, f_{\beta_j}v_\mu,f^2_{\beta_j}v_\mu,\ldots\}$ are linearly independent, the linear span
$\langle U_{-\beta_j}(s).v_\mu\mid s\in \mathbb C\rangle$ coincides with $\langle v_\mu, f_{\beta_j}v_\mu,  f^2_{\beta_j}v_\mu,\ldots\rangle$,
which proves the claim.
\end{proof}

A first test to show that a given sequence of root vectors is a birational sequence is to
prove that the map $\pi$ in \eqref{birational1} is dominant.
\begin{lem}\it
Let $S=({\beta_1},\ldots,{\beta_N})$ be a sequence of positive roots. The map $\pi$ in \eqref{birational1}
is dominant if and only if the subspaces
$\mathcal{U}(\mathfrak n^-)^i$ form a strictly increasing sequence of subspaces such that
\begin{equation}\label{strict}
\{0\}=\mathcal{U}(\mathfrak n^-)^{N+1}\subsetneq \mathcal{U}(\mathfrak n^-)^N\subsetneq \cdots \subsetneq \mathcal{U}(\mathfrak n^-)^1 =\mathcal{U}(\mathfrak n^-).
\end{equation}
\end{lem}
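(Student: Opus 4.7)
The plan is to translate geometric dominance of $\pi$ into the algebraic statement about the chain $\mathcal{U}(\mathfrak{n}^-)^\bullet$ by duality. The main tool is the natural non-degenerate pairing
$$
\langle\cdot,\cdot\rangle : \mathcal{U}(\mathfrak{n}^-)\times \mathbb{C}[U^-]\longrightarrow \mathbb{C},\qquad \langle x,f\rangle := (x\cdot f)(e),
$$
where $\mathcal{U}(\mathfrak{n}^-)$ acts on $\mathbb{C}[U^-]$ by left-invariant differential operators. This pairing is $T$-equivariant and decomposes as a direct sum of non-degenerate pairings between the finite-dimensional weight spaces $\mathcal{U}(\mathfrak{n}^-)_{-\mu}$ and $\mathbb{C}[U^-]_\mu$. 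In particular, for every $T$-stable subspace $V\subseteq \mathcal{U}(\mathfrak{n}^-)$ the double-annihilator identity $(V^\perp)^\perp=V$ holds, and each $\mathcal{U}(\mathfrak{n}^-)^i$, being the span of the weight vectors $f_{\beta_i}^{m_i}\cdots f_{\beta_N}^{m_N}$, is $T$-stable.

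For every $i\in\{1,\ldots,N+1\}$ I will introduce the tail map $\pi_i:\mathbb{A}^{N-i+1}\to U^-$, $(z_i,\ldots,z_N)\mapsto \exp(z_if_{\beta_i})\cdots\exp(z_Nf_{\beta_N})$, so that $\pi_1=\pi$ and the image of $\pi_i$ is $Y_i$. Using that $f_{\beta_j}$ acts on $\mathbb{C}[U^-]$ as the derivation $(f_{\beta_j}f)(u)=\frac{d}{dt}\big|_{t=0}f(u\exp(tf_{\beta_j}))$, a straightforward Taylor expansion about the origin gives
$$
(\pi_i^* f)(z_i,\ldots,z_N)=\sum_{\mathbf{m}\in\mathbb{N}^{N-i+1}}\frac{z^{\mathbf{m}}}{\mathbf{m}!}\bigl(f_{\beta_i}^{m_i}\cdots f_{\beta_N}^{m_N}\cdot f\bigr)(e),
$$
a sum with only finitely many non-zero terms when $f$ is a polynomial. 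I then read off that
$$
\ker\pi_i^* \;=\; I(\overline{Y_i}) \;=\; (\mathcal{U}(\mathfrak{n}^-)^i)^\perp,
$$
and combining this with the double-annihilator identity yields the key equivalence
$$
\overline{Y_i}=\overline{Y_{i+1}}\quad\Longleftrightarrow\quad \mathcal{U}(\mathfrak{n}^-)^i=\mathcal{U}(\mathfrak{n}^-)^{i+1}.
$$

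The final ingredient is a telescoping dimension count. Because $Y_i=U_{-\beta_i}\cdot Y_{i+1}$ one has $\dim\overline{Y_i}\le \dim\overline{Y_{i+1}}+1$, and starting from $Y_{N+1}=\{e\}$ this telescopes to $\dim\overline{Y_1}\le N$, with equality forcing every intermediate step to be a strict inclusion $\overline{Y_i}\supsetneq\overline{Y_{i+1}}$ of irreducible varieties. Since $\pi$ is dominant exactly when $\overline{Y_1}=U^-$, i.e.\ when $I(\overline{Y_1})=0$, i.e.\ when $\mathcal{U}(\mathfrak{n}^-)^1=\mathcal{U}(\mathfrak{n}^-)$, pairing this observation with the key equivalence of the previous paragraph delivers the desired biconditional. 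The main technical point I anticipate needing care with is the identity $(V^\perp)^\perp=V$: in a general infinite-dimensional non-degenerate pairing this fails, and here it is essential to invoke the $T$-weight decomposition to reduce the statement to the finite-dimensional case weight space by weight space.
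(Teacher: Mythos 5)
Your proof is correct, and it takes a genuinely different route from the paper. The paper's proof works externally: it fixes an embedding of $U^-$ into a large irreducible representation $V(\lambda)$ with $\lambda\gg 0$, proves the "strict chain $\Rightarrow$ dominant" direction by a step-by-step dimension increase inside $V(\lambda)$ (using their Lemma on $\langle Y_i v_\lambda\rangle = \mathcal U(\mathfrak n^-)^i v_\lambda$), and proves the converse by contradiction: a failed strict inclusion would bound $\dim V(\ell\lambda)$ above by a polynomial of degree $N-1$ in $\ell$, contradicting the degree-$N$ growth guaranteed by Weyl's dimension formula. Your argument instead stays internal to the unipotent group, using the standard non-degenerate $T$-equivariant pairing $\mathcal U(\mathfrak n^-)\times\mathbb C[U^-]\to\mathbb C$ (the graded duality $\mathbb C[U^-]\cong\mathcal U(\mathfrak n^-)^{\circ}$). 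The Taylor-expansion identity $\ker\pi_i^*=(\mathcal U(\mathfrak n^-)^i)^\perp$ combined with the double-annihilator (valid here because the weight spaces are finite-dimensional and $\mathcal U(\mathfrak n^-)^i$ is $T$-stable, as you rightly stress) gives the clean equivalence $\overline{Y_i}=\overline{Y_{i+1}}\iff\mathcal U(\mathfrak n^-)^i=\mathcal U(\mathfrak n^-)^{i+1}$; the telescoping bound $\dim\overline{Y_i}\le\dim\overline{Y_{i+1}}+1$ together with irreducibility of the $\overline{Y_i}$ then forces the biconditional. Your route avoids both the auxiliary representation $V(\lambda)$ and Weyl's dimension formula, and it makes the mechanism more transparent: the space $\mathcal U(\mathfrak n^-)^i$ is exactly the annihilator-of-the-annihilator of the tail locus $\overline{Y_i}$. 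One small point worth noting is that the paper's normalization $\mathcal U(\mathfrak n^-)^{N+1}=\{0\}$ differs from the natural one $\mathbb C\cdot 1$ that matches $(\mathcal U(\mathfrak n^-)^{N+1})^\perp=\mathfrak m_e$ and $Y_{N+1}=\{e\}$, but since the inclusion $\mathcal U(\mathfrak n^-)^{N+1}\subsetneq\mathcal U(\mathfrak n^-)^{N}$ is automatic under either convention, this discrepancy is harmless.
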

\begin{proof}
Consider an embedding of $G/B\hookrightarrow \mathbb P(V(\lambda))$ into a projective space
$\mathbb P(V(\lambda))$ for some regular dominant weight $\la$ with sufficiently large coefficients with respect to the fundamental weights, and
identify $U^-$ with the open cell $U^-.id\subset G/B$.
\par
Assume first that \eqref{strict} holds and set $\widehat{(\mathcal{U}(\mathfrak n^-)^{j+1}.v_\la)}=\{v\in  \mathcal{U}(\mathfrak n^-)^{j+1}.v_\la\mid f_{\beta_j}v\not\in
\mathcal{U}(\mathfrak n^-)^{j+1}.v_\la\}$ for $j=1,\ldots,N$. Now \eqref{strict} implies that this set is not empty (for $\la$ as above). The set is open, and it meets
$Y_{j+1}.v_\la$ by Lemma~\ref{one}. It follows that $\dim Y_{j}.v_\la=\dim Y_{j+1}.v_\la+1$, which implies
for $j=1$ that $\pi$ is dominant.

Now assume that the map $\pi$ in \eqref{birational1} is dominant, so for all $\ell\ge 1$ we get
$$
\begin{array}{rcl}
V(\ell\la)\supseteq \langle f_{\beta_1}^{m_1}\cdots f_{\beta_N}^{m_N} v_{\ell\la} \mid\mathbf m\in  \mathbb N^N\rangle
&\supseteq& \langle U_{-\beta_1}(s_1)\cdots U_{-\beta_N}(s_N)
v_{\ell\la}\mid s_1,\ldots,s_N\in\mathbb C\rangle
\\&=&\langle U^- .v_{\ell\la}\rangle  =V(\ell\la).
\end{array}
$$
If one of the inclusions in \eqref{strict} is not strict, say $ \mathcal{U}(\mathfrak n^-)^j= \mathcal{U}(\mathfrak n^-)^{j+1}$, then we would get for all $\ell\ge 1$:
\begin{equation}\label{notstrict}
V(\ell\la)= \langle f_{\beta_1}^{m_1}\cdots  f_{\beta_{j-1}}^{m_{j-1}} f_{\beta_{j+1}}^{m_{j+1}}\cdots f_{\beta_N}^{m_N} v_{\ell\la}
\mid\mathbf m\in  \mathbb N^{N-1}\rangle.
\end{equation}
Let $q_\la$ be the maximal length of a root string in the set of weights of $V(\la)$. It follows that it is sufficient in \eqref{notstrict}
to take only those $\mathbf m\in  \mathbb N^{N-1}$ such that $m_i\le q_\la$ for all $i$.
Note that $q_{\ell\la}=\ell q_\la$. If
\eqref{notstrict} holds, then $\dim V(\ell\la)\le q_\la^{N-1}\ell^{N-1}$, so the dimension is bounded above by a polynomial
in $\ell$ of degree $N-1$. But Weyl's dimension formula implies that $\dim V(\ell\la)$ is a polynomial in $\ell$ of degree $N$,
which is a contradiction, and hence \eqref{strict} holds.
\end{proof}

\subsection{Examples of birational sequences}\label{examplegenerating}
\begin{exam}\label{monomialseins} {\it The PBW-type case\/}:
Fix an enumeration $\Phi^+=\{\beta_1,\ldots,\beta_N\}$ of the positive roots
and set $S=({\beta_1},\ldots,{\beta_N})$.  The map $\pi$ in \eqref{birational1}
above induces an isomorphism of affine varieties for any chosen enumeration. In particular, $S$ is always a birational sequence for $U^-$.
\end{exam}
\begin{exam}\label{monomialszwo} {\it The reduced decomposition case\/}:
Let $\{\alpha_1,\ldots,\alpha_n\}$ be the set of simple roots for the choice of $\Phi^+\subset \Phi$.
Fix a reduced decomposition $\underline{w}_0=s_{i_1}\cdots s_{i_N}$ of the longest word in the Weyl group and
set $S=({\alpha_{i_1}},\ldots, {\alpha_{i_N}})$. Using the  Bott-Samelson desingularization associated to the reduced
decomposition of $w_0$, one easily shows that $S$ is a birational sequence.
\end{exam}
\begin{exam} {\it Mixed cases\/}:  \rm Let $L\subset G$ be a Levi subgroup and denote by $w_L\in W_L$ the
longest element in the Weyl group $W_L\subset W$ of $L$. Let $(\alpha_{i_1},\ldots,\alpha_{i_t})$ be a sequence
of simple roots for $G$ such that $s_{i_1}\cdots s_{i_t} w_L=w_0$ and $t+\ell(w_L)=N$. Denote by $\Phi_L$ the root
system of $L$ and let $\{\beta_1,\ldots,\beta_{N-t}\}=\Phi^+\cap \Phi_L$, then $S=(\alpha_{i_1},\ldots,\alpha_{i_t},\beta_1,\ldots,\beta_{N-t})$
is a birational sequence.

Let $w_L=s_{\gamma_1}\cdots s_{\gamma_{N-t}}$
be a reduced decomposition and let $\{\delta_1,\ldots,\delta_{t}\}$ be the roots in $\Phi^+-\Phi_L^+$,
then $S=(\delta_{i_1},\ldots,\delta_{i_t},\gamma_1,\ldots,\gamma_{N-t})$ is a birational sequence.
\end{exam}
\begin{exam} {\it More mixed cases\/}:  Of course, one may repeat the procedure above: Given
an increasing sequence of Levi subgroups $L_1\subsetneq L_2\subsetneq\ldots  \subsetneq L_r=G$,
one subsequently switches between Example~\ref{monomialseins} and Example~\ref{monomialszwo} . For example,
for $G=SL_5$ let $\alpha_i=\epsilon_i-\epsilon_{i+1}$, $i=1,\ldots,4$, and let $L_i$ be the Levi
subgroup associated to the simple roots $\alpha_1,\ldots,\alpha_i$. One gets the following birational sequence:
$$
S=(
\underbrace{
\epsilon_4-\epsilon_{5},\epsilon_3-\epsilon_{5},\epsilon_2-\epsilon_{5},\epsilon_1-\epsilon_{5}}_{Example~\ref{monomialseins}},
\underbrace{
\epsilon_1-\epsilon_{2},\epsilon_2-\epsilon_{3},\epsilon_3-\epsilon_{4}}_{Example~\ref{monomialszwo}},
\underbrace{
\epsilon_2-\epsilon_{3},\epsilon_1-\epsilon_{3}}_{Example~\ref{monomialseins}},
\epsilon_1-\epsilon_{2})
$$
\end{exam}
\begin{exam} {\it Elementary moves\/}:
Let  $S=({\beta_1},\ldots,{\beta_N})$ be a birational sequence for $U^-$. If the root vectors
$f_{\beta_i},f_{\beta_{i+1}}$ commute, then switching $\beta_i$ and $\beta_{i+1}$ gives again
a birational sequence for $U^-$.

If $\{\beta_i,\beta_{i+1},\beta_{i+2}\}$
is the set of positive roots for a (sub-) root system of type ${\tt A}_2$, then permuting the three gives again
a birational sequence for $U^-$. If $\beta_i=\beta_{i+1}+\beta_{i+2}$, then replacing $\beta_i,\beta_{i+1},\beta_{i+2}$
by $\beta_{i+2},\beta_{i+1},\beta_{i+2}$ or $\beta_{i+1},\beta_{i+2},\beta_{i+1}$ gives again a birational sequence for $U^-$
and so on. Similar rules hold of course for other rank two (sub-) root systems, too.
\end{exam}
\begin{exam} If $S=({\beta_1},\ldots,{\beta_N})$ is a birational sequence, then so is $S'=({\beta_N},\ldots,{\beta_1})$.
\end{exam}

\section{A $T$-equivariant birational map}
Let $S=({\beta_1},\ldots,{\beta_N})$ be a birational sequence for $U^-$.
Let ${\cal Z}_S$ be the toric variety ${\cal Z}_S= Z_S\times T$, where the torus
$T\times \mathbb T$ is acting on ${\cal Z}_S$ as follows:
$$
\forall (t,\texttt{t})\in T\times \mathbb T:(t,\texttt{t})\cdot(z,t'):=
(\texttt{t}_1 \beta_1(t)^{-1}z_1,\ldots, \texttt{t}_N \beta_N(t)^{-1}z_N;tt').
$$
Let $x_i:\, (z_1 ,\ldots,z_N)\mapsto z_i$ be the $i$-th coordinate function on ${Z}_S$,
then
$$
\mathbb C[{\cal Z}_S]\simeq \mathbb C[x_1,\ldots,x_N]\otimes \mathbb C[T]=\mathbb C[x_1,\ldots,x_N]\otimes\mathbb C[e^{\la}\mid\la\in\Lambda].
$$
\begin{lem}\label{Lem:3}
\it Let $S=(\beta_1,\ldots,\beta_N)$ be a birational sequence.
The canonical map ${\cal Z}_S\rightarrow G/ U$, $(z,t)\mapsto
\pi(z).t\cdot \bar{1}$, induces a $T$-equivariant birational map $\varphi: {\cal Z}_S\rightarrow G/\hskip -3.5pt / U$.
\end{lem}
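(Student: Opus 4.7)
The plan is to factor $\varphi$ as a composition of three maps whose birationality is either given or easy to check, and then to verify $T$-equivariance by a direct calculation using that $f_{\beta_i}$ is a root vector of weight $-\beta_i$.

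First I would write $\varphi$ as the composition
\[
\mathcal Z_S = Z_S\times T \xrightarrow{\;\pi\times\mathrm{id}\;} U^-\times T \xrightarrow{\;m\;} B^- \xrightarrow{\;o\;} G/\hskip-3.5pt/U,
\]
where $m(u,t)=ut$ is the multiplication map and $o(b)=b\cdot\bar 1$ is the orbit map recalled in section~\ref{GnachU}. The middle map $m$ is an isomorphism of affine varieties, because $B^-=U^-\rtimes T$ as algebraic groups. The first factor $\pi\times\mathrm{id}$ is birational, since $\pi$ is birational by the hypothesis that $S$ is a birational sequence (Definition~\ref{orderedgenerating}). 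For the orbit map $o$, I would observe that $B^-\cap U=\{e\}$ and that $B^-U\subset G$ is the open Bruhat cell; pushing this down to $G/U\hookrightarrow G/\hskip-3.5pt/U$, one sees that $o$ identifies $B^-$ with a dense open subset of $G/\hskip-3.5pt/U$, hence $o$ is birational (in fact an open immersion onto its image). A composition of birational maps with an isomorphism is birational, so $\varphi$ is birational.

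For $T$-equivariance, I would use that $f_{\beta_i}$ has weight $-\beta_i$, whence $\mathrm{Ad}(t)f_{\beta_i}=\beta_i(t)^{-1}f_{\beta_i}$ and therefore
\[
t\,\exp(z_i f_{\beta_i})\,t^{-1}=\exp\bigl(\beta_i(t)^{-1}z_i f_{\beta_i}\bigr)
\]
for every $t\in T$. Multiplying through the $N$ factors one gets $t\,\pi(z)\,t^{-1}=\pi\bigl(\beta_1(t)^{-1}z_1,\ldots,\beta_N(t)^{-1}z_N\bigr)$. Using this together with the definition of the $T$-action on $\mathcal Z_S$ (with the $\mathbb T$-component equal to $1$), the calculation
\[
\varphi(t\cdot(z,t'))=\pi\bigl(\beta_1(t)^{-1}z_1,\ldots,\beta_N(t)^{-1}z_N\bigr)\cdot tt'\cdot\bar 1=t\,\pi(z)\,t^{-1}\cdot tt'\cdot\bar 1=t\cdot\varphi(z,t')
\]
shows that $\varphi$ intertwines the two $T$-actions.

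The only nontrivial ingredient is the birationality of the orbit map $o$, which I expect to be the main obstacle if one wishes to give full details: one has to know that $G/\hskip-3.5pt/U$ contains $G/U$ as a dense open subset (from \eqref{coordinatefunction} and the identification in \eqref{coordinateGnachU}) and to invoke the open Bruhat cell. Everything else reduces to the hypothesis that $\pi$ is birational and to the standard semidirect-product decomposition $B^-=U^-\rtimes T$.
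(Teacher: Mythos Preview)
Your proof is correct and follows essentially the same route as the paper: the paper factors $\varphi$ through $\pi\times\mathrm{id}$, the multiplication $U^-\times T\to B^-$, and the orbit map $o:B^-\to G/\hskip-3.5pt/U$, and verifies $T$-equivariance by the identical conjugation calculation $t\exp(z_if_{\beta_i})t^{-1}=\exp(\beta_i(t)^{-1}z_if_{\beta_i})$. You give a bit more detail on why $o$ is birational (via the open Bruhat cell), whereas the paper simply quotes this fact from section~\ref{GnachU}.
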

\begin{proof}
The map  is birational because the maps $\pi\times id:Z_S\times T\rightarrow  U^-\times T$,
the map $U^-\times T\rightarrow B^-$ and the orbit map $o: B^-\rightarrow G/\hskip-3.5pt/ U$
are birational. For the actions of $T$ on ${\cal Z}_S$ and $G/\hskip-3.5pt/ U$ we have
$$
\begin{array}{rclcl}
\varphi((t\times \texttt{1})\cdot (z_1,\ldots,z_N,t'))&=&
\exp(\frac{z_1}{\beta_1(t)}f_1)\cdots \exp(\frac{z_N}{\beta_N(t)}f_N)tt' \cdot\bar{1}\\
&=&(t\exp(z_1f_1)t^{-1})\cdots (t\exp(z_Nf_N)t^{-1})tt' \cdot\bar{1}\\
&=&t\cdot\varphi(z_1,\ldots,z_N,t').\\
\end{array}
$$
\end{proof}
\begin{coro}\label{embedding}\it
Via the isomorphism $\mathbb C(G/\hskip -3.5pt/U)\simeq\mathbb C({\cal Z}_S)$,
we can identify the coordinate ring $\mathbb C[G/\hskip -3.5pt/U]$ with a subalgebra of
$A:=\mathbb C[x_1,\ldots,x_N]\otimes\mathbb C[e^{\la}\mid\la\in\Lambda^+]\subset \mathbb C({\cal Z}_S)$.
\end{coro}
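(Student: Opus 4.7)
The plan is to use the $G \times T$-module decomposition \eqref{coordinateGnachU}, namely $\mathbb C[G/\hskip-3.5pt/U] \simeq \bigoplus_{\lambda \in \Lambda^+} V(\lambda)^* \otimes v_\lambda$, and show that each generator of the form $\phi \otimes v_\lambda$ with $\lambda \in \Lambda^+$ pulls back under $\varphi$ to an element of $A$. Since the isomorphism $\varphi^*: \mathbb C(G/\hskip -3.5pt/U) \xrightarrow{\sim} \mathbb C(\mathcal Z_S)$ provided by Lemma \ref{Lem:3} is an algebra map, it suffices to check the claim on such generators.

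Concretely, I would fix $\lambda \in \Lambda^+$ and $\phi \in V(\lambda)^*$ and compute, using the description \eqref{coordinatefunction} of $\phi \otimes v_\lambda$ as a regular function on $G/U \hookrightarrow G/\hskip-3.5pt/U$,
\[
\varphi^*(\phi \otimes v_\lambda)(z_1, \ldots, z_N; t') = \phi\bigl(\pi(z)\, t'\, v_\lambda\bigr) = \lambda(t')\, \phi\bigl(\pi(z) v_\lambda\bigr),
\]
where the last equality uses that $v_\lambda$ is a $T$-eigenvector of weight $\lambda$.

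Next, I would argue that $\pi(z) v_\lambda = \exp(z_1 f_{\beta_1}) \cdots \exp(z_N f_{\beta_N}) v_\lambda$ is a polynomial expression in $(z_1, \ldots, z_N)$ with values in $V(\lambda)$: each $f_{\beta_i}$ is a locally nilpotent endomorphism of the finite dimensional space $V(\lambda)$, so each exponential truncates to a finite sum, and the composition unfolds into a polynomial in the $z_i$'s. Applying the linear functional $\phi$ then yields an element of $\mathbb C[z_1, \ldots, z_N]$, which under the identification $z_i \leftrightarrow x_i$ becomes a polynomial in $\mathbb C[x_1, \ldots, x_N]$. Together with the $\lambda(t') = e^\lambda(t')$ factor (with $\lambda \in \Lambda^+$), we obtain
\[
\varphi^*(\phi \otimes v_\lambda) \in \mathbb C[x_1, \ldots, x_N] \otimes \mathbb C \cdot e^\lambda \subset A.
\]

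Finally, extending by $\mathbb C$-linearity over the sum \eqref{coordinateGnachU} over $\lambda \in \Lambda^+$ gives $\varphi^*\bigl(\mathbb C[G/\hskip-3.5pt/U]\bigr) \subset A$, which is the desired embedding. I do not anticipate any real obstacle here: the birationality of $\varphi$ from Lemma \ref{Lem:3} handles the passage between function fields, and the restriction from $\Lambda$ to $\Lambda^+$ in the definition of $A$ is precisely matched by the fact that only dominant weights appear in \eqref{coordinateGnachU}. The only delicate point is to keep track of the identifications between $V(\lambda)^*$ and regular functions on $G/U$, and of the $T$-action under the orbit map $o: B^- \to G/\hskip-3.5pt/U$.
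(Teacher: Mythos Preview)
Your argument is correct and is precisely the natural one; the paper states the corollary without proof, treating it as immediate from Lemma~\ref{Lem:3} and \eqref{coordinateGnachU}, and the explicit computation you wrote out is exactly what the authors later carry out in the proof of Proposition~\ref{semigleichsemi1}. There is nothing to add.
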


\begin{rem}
The birational map $\varphi:\mathcal{Z}_S\rightarrow G/\hskip -3.5pt/U$ in Lemma \ref{Lem:3} can be extended to a morphism $\widetilde{\varphi}:\mathbb{A}^{N+n}\rightarrow G/\hskip -3.5pt/U$ in the following way. We identify $G/U$ with the dense orbit 
$G\cdot (v_{\omega_1}+\ldots+v_{\omega_n})$ in $G/\hskip -3.5pt/U$ (see \cite{VP}), where $v_{\omega_k}$ is a highest 
weight vector in the fundamental representation $V(\omega_k)$ of $G$ and $v_{\omega_1}+\ldots+v_{\omega_n}\in V(\omega_1)\oplus\cdots\oplus V(\omega_n)$. The boundary of this dense orbit is the union of the $G$-orbits $G\cdot (v_{\omega_{j_1}}+\ldots+v_{\omega_{j_p}})\subset V(\omega_1)\oplus\cdots\oplus V(\omega_n)$ where $1\leq j_1<\ldots<j_p\leq n$ and $0\leq p\leq n-1$ (when $p=0$, this orbit is a single point $\{0\}$).
\par
We fix the embedding of $T$ into $\mathbb{A}^n$: 
$$t\in T\mapsto (\omega_1(t),\omega_2(t),\ldots,\omega_n(t))\in\mathbb{A}^n,$$
identifying $T$ with points in $\mathbb{A}^n$ having non-zero coordinates. With these notations, the morphism 
$\varphi:\mathcal{Z}_S\rightarrow G/U$ is given by
$$(z,t)\mapsto \pi(z)\cdot (\omega_1(t)v_{\omega_1}+\ldots+\omega_n(t)v_{\omega_n}),$$
which can be extended to a morphism $\widetilde{\varphi}:\mathbb{A}^{N+n}\rightarrow G/\hskip -3.5pt/U$, by the above argument.
\end{rem}

\section{Weighted lex-orders on $\mathbb N^N$}
We want to obtain a $T$-equivariant flat degeneration of $G/\hskip -3.5pt/U$ into a toric variety using the embedding in Corollary~\ref{embedding}.
To achieve this, we replace $\mathbb C[G/\hskip -3.5pt/U]$ by its ``{\it algebra of initial terms\/}''.
We recall the definition and some properties of monomial orders used in the following.

\subsection{Weight functions and lexicographic orders} Let $\Psi: \mathbb Z^N\rightarrow \mathbb Z$
be a $\mathbb Z$-linear map such that $\Psi(\mathbb N^N)\subseteq \mathbb N$, we call $\Psi$
an  {\it integral weight function}. The {\it weight order} on $\mathbb N^N$ associated to $\Psi$ is the partial
order defined by $\mathbf m>_{\Psi} \mathbf m'$ iff $\Psi(\mathbf m)>\Psi(\mathbf m')$.
We write $>_{lex}$ for the lexicographic order and $>_{rlex}$
for the right lexicographic order on $\mathbb N^N$.
\begin{exam}
For $\mathbf a=(a_1,a_2,a_3), \mathbf b=(b_1,b_2,b_3)\in\mathbb N^3$
we have $\mathbf a>_{lex} \mathbf b$ if $a_1>b_1$, or if $a_1=b_1$ and $a_2>b_2$, or if $a_1=b_1$, $a_2=b_2$  and $a_3>b_3$,
and we have $\mathbf a>_{rlex} \mathbf b$ if $a_3>b_3$, or if $a_3=b_3$ and $a_2>b_2$, or if $a_3=b_3$, $a_2=b_2$  and $a_1>b_1$.
\end{exam}
We refine the partial order above to a total order.
\begin{defn}\label{weightorder}
A {\it $\Psi$-weighted lexicographic order} (respectively a {\it $\Psi$-weighted right lexicographic order}) on
$\mathbb N^N$ is a total order ``$>$" on $\mathbb N^N$ refining ``$>_{\Psi}$'' as follows:
$$\Small
\mathbf m>\mathbf m'\Leftrightarrow \hbox{\ either $\Psi(\mathbf m)> \Psi(\mathbf m')$, or
$\Psi(\mathbf m)=\Psi(\mathbf m')$ and $\mathbf m>_{lex}\mathbf m'$ (respectively $\mathbf m>_{rlex}\mathbf m'$)}.
$$
If $\Psi$ satisfies in addition the condition $\Psi(\mathbf m)>0$ for all $\mathbf m\in \mathbb N^N-\{0\}$,
then a {\it $\Psi$-weighted opposite lexicographic order} (respectively {\it $\Psi$-weighted opposite right lexicographic order}) on
$\mathbb N^N$ is a total order ``$>$" on $\mathbb N^N$ defined as follows:
$$\Small
\mathbf m>\mathbf m'\Leftrightarrow \hbox{\ either $\Psi(\mathbf m)> \Psi(\mathbf m')$, or
$\Psi(\mathbf m)=\Psi(\mathbf m')$ and $\mathbf m<_{lex}\mathbf m'$ (respectively $\mathbf m<_{rlex}\mathbf m'$)}.
$$
\end{defn}
\noindent
{\bf Notation:} We often just write $(\mathbb N^N,>)$ and say that $\mathbb N^N$ is endowed with a {\it $\Psi$-weighted lex order}
if we have an integral weight function $\Psi:\mathbb Z^N\rightarrow \mathbb Z$
and ``$>$''  is one of the refinements of ``$>_\Psi$'' above.

It is easy to verify:
\begin{lem} \it
A $\Psi$-weighted lex order defines a monomial order on $\mathbb N^N$, i.e., a total order such that $\bf m>m'$
implies for all ${\bf m''}\in \mathbb N^N-\{0\}$: $\bf m+m''>m'+m''>m'$.
\end{lem}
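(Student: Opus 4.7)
The lemma folds two assertions into one chain: \emph{translation invariance} ($\mathbf{m}>\mathbf{m}'$ implies $\mathbf{m}+\mathbf{m}''>\mathbf{m}'+\mathbf{m}''$) and \emph{positivity} ($\mathbf{m}''\neq\mathbf{0}$ implies $\mathbf{m}'+\mathbf{m}''>\mathbf{m}'$, equivalently $\mathbf{m}''>\mathbf{0}$). My plan is to check each assertion separately, treating the $\Psi$-strict and $\Psi$-tied sub-cases side by side so that all four variants of Definition~\ref{weightorder} are covered simultaneously.

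For translation invariance, the $\mathbb{Z}$-linearity of $\Psi$ gives
$$
\Psi(\mathbf{m}+\mathbf{m}'')-\Psi(\mathbf{m}'+\mathbf{m}'') \;=\; \Psi(\mathbf{m})-\Psi(\mathbf{m}'),
$$
so the first clause of Definition~\ref{weightorder} is preserved under translation by $\mathbf{m}''$. When $\Psi(\mathbf{m})=\Psi(\mathbf{m}')$, the translated $\Psi$-values still coincide and we fall back on the lex (or right-lex) tiebreaker. This is handled by the classical translation-invariance of $>_{lex}$ and $>_{rlex}$ on $\mathbb{Z}^N$: the first (respectively last) coordinate at which $\mathbf{m}$ and $\mathbf{m}'$ differ is unchanged by adding $\mathbf{m}''$, and the sign of the difference at that coordinate is unchanged. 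Switching to the "opposite" variants merely reverses the tiebreaker globally, so invariance transports to them as well.

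For positivity, I would split on whether $\Psi(\mathbf{m}'')>0$. If so, then $\Psi(\mathbf{m}'+\mathbf{m}'')>\Psi(\mathbf{m}')$ and the first clause of Definition~\ref{weightorder} gives $\mathbf{m}'+\mathbf{m}''>\mathbf{m}'$ at once. If instead $\Psi(\mathbf{m}'')=0$, then we are necessarily in one of the two non-opposite variants, because the opposite variants carry the extra hypothesis $\Psi(\mathbf{m})>0$ for all $\mathbf{m}\neq\mathbf{0}$. The $\Psi$-parts tie, so the lex tiebreaker decides; since $\mathbf{m}''\in\mathbb{N}^N$ is nonzero, its first (respectively last) nonzero entry $i$ satisfies $m''_i>0$, so $\mathbf{m}'+\mathbf{m}''$ agrees with $\mathbf{m}'$ in all earlier (respectively later) coordinates and strictly exceeds it at coordinate $i$. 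Hence $\mathbf{m}'+\mathbf{m}''>_{lex}\mathbf{m}'$ (respectively $>_{rlex}\mathbf{m}'$), as required.

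The one point needing attention is the interplay between the opposite convention and positivity: the hypothesis $\Psi(\mathbf{m})>0$ for $\mathbf{m}\neq\mathbf{0}$ in the opposite case is precisely what forbids a vanishing $\Psi(\mathbf{m}'')$ from flipping the comparison via the reversed tiebreaker. With that hypothesis built into the definition exactly where it is needed, no genuine obstacle arises, and the verification is a short case analysis as indicated.
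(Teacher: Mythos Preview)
Your argument is correct. You handle translation invariance via linearity of $\Psi$ together with translation invariance of $>_{lex}$ and $>_{rlex}$, and you handle positivity by splitting on whether $\Psi(\mathbf m'')>0$; the observation that the opposite variants carry the standing hypothesis $\Psi(\mathbf m)>0$ for $\mathbf m\neq\mathbf 0$, and that this is exactly what prevents the reversed tiebreaker from spoiling positivity, is the key point and you identify it cleanly. One cosmetic omission: the lemma also asserts that ``$>$'' is a \emph{total} order, which you do not address explicitly; but this is immediate from the definition, since for distinct $\mathbf m,\mathbf m'$ either the $\Psi$-values differ or the lex/rlex tiebreaker decides.

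As for comparison with the paper: the paper gives no proof at all, recording only ``It is easy to verify''. Your write-up therefore supplies what the paper leaves to the reader, and there is no alternative approach to compare against.
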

\begin{exam}\label{lexorder}
If $\Psi$ is the zero map, then the $\Psi$-weighted lexicographic order is just the lexicographic order,
the $\Psi$-weighted right lexicographic order is just the right lexicographic order on $\mathbb N^N$.
\end{exam}
\begin{exam}\label{homorder}
If $\Psi:\mathbb Z^N\rightarrow \mathbb Z$ is the map $\mathbf m\mapsto \sum_{i=1}^N m_i$, then the $\Psi$-weighted lexicographic order
is the homogeneous lexicographic order, and the $\Psi$-weighted right lexicographic order is the
homogeneous right lexicographic order on $\mathbb N^N$.
\end{exam}
\begin{exam}\label{rootorder}
Fix a sequence $S=(\beta_1,\ldots,\beta_N)$ of roots in $\Phi^+$, let $ht$ be the height function on the positive roots and let
$\Psi$ be the {\it height weighted function}:
$$
\Psi:\mathbb Z^N\rightarrow \mathbb Z,\quad \mathbf m\mapsto \sum_{i=1,\ldots,N} m_i ht(\beta_i).
$$
Now $\forall\mathbf m\in \mathbb N^N-\{0\}: \Psi(\mathbf m)>0$, so one can define
all four possible $\Psi$-weighted lex orders on $\mathbb N^N$.
\end{exam}

\section{The valuation monoid}\label{SValuationsemigroup}
Having fixed a $\Psi$-weighted lex order ``$>$'' on $\mathbb N^N$, we define a $\mathbb Z^N$-valued valuation on $\mathbb C[x_1,\ldots,x_N]-\{0\}$ by
\begin{equation}\label{valuationdef1}
\nu_1(p(\bfx))=\min\{\mathbf p \in \mathbb N^N\mid a_{\mathbf p}\not=0\}\quad \hbox{for\ }p({\bf x})
=\sum_{\mathbf p \in \mathbb N^N} a_{\bf p} {\bf x}^{\bf p}.
\end{equation}
For a rational function $h = \frac{p}{p'}\in \mathbb C(x_1,\ldots,x_N)$ we define $\nu_1(h)=\nu_1(p)-\nu_1(p')$.
The valuation $\nu_1$ is usually called the {\it lowest term valuation}.
To extend the valuation to $\mathbb C(G/\hskip-3.5pt/U)$, fix characters $\eta_1,\ldots,\eta_r$ such
that
$$
X(\mathbb C^*)^r=\mathbb Z\eta_1\oplus\ldots\oplus \mathbb Z\eta_r
$$
Set $\Lambda^\dagger=\mathbb N\omega_1\oplus\ldots \oplus \mathbb N\omega_n\oplus
\mathbb N\eta_1\oplus\ldots \oplus \mathbb N\eta_r\subset \Lambda^+$
and fix a total order $>$ on $\Lambda^\dagger\simeq\mathbb N^{n+r}$. We define a total
order ``$\triangleright$'' on $\Lambda^\dagger \times \mathbb N^N$ by: $(\la,\mathbf m)\triangleright(\mu,\mathbf m')$
if  $\la > \mu$, and if $\la = \mu$, then we set  $(\la,\mathbf m)\triangleright(\la,\mathbf m')$
if $\mathbf m>\mathbf m'$.
Given  $p(\mathbf x,\mathbf e^{\la})\in \mathbb C[x_1,\ldots,x_N; e^{\la}: \la\in \Lambda^\dagger]-\{0\}$,
we define a $\Lambda\times\mathbb Z^N$-valued valuation:
\begin{equation}\label{valuationdef2}
\nu(p(\bfx, e^{\la}))=\min\{(\la,\mathbf p) \in  \Lambda^\dagger\times\mathbb N^N\mid a_{\la,\mathbf p}\not=0\} \hbox{\ for\ }
p(\mathbf x, e^{\la}) = \sum_{(\la, {\bf p})\in  \Lambda^\dagger\times\mathbb N^N} a_{\la,\bf p} {\bf x}^{\bf p} e^\la,
\end{equation}
and for a rational function $h = \frac{p}{p'}$ we define $\nu(h)=\nu(p)-\nu(p')$.
\begin{defn}
The valuation $\nu$ is called the {\it lowest term valuation} with respect to the
para\-meters $x_{{1}},\ldots, x_{N}, e^{\omega_1},\ldots,e^{\omega_n},e^{\eta_1},\ldots,e^{\eta_r}$ and the monomial order ``$\,\ge$''.
\end{defn}
By Corollary~\ref{embedding}, it makes sense to view $\nu$ also as a valuation on $\mathbb C(G/\hskip -3.5pt/U)$.
We associate to $\mathbb C[G/\hskip -3.5pt/U]$ the valuation monoid ${\cal V}(G/\hskip -3.5pt/U)$:
\begin{equation}\label{valuationsemigroup}
{\cal V}(G/\hskip -3.5pt/U)={\cal V}(G/\hskip -3.5pt/U,\nu,>)
=\{\nu({p})\mid p\in \mathbb C[G/\hskip -3.5pt/U]-\{0\}\}
\subseteq \Lambda\times\bZ^N.
\end{equation}
\begin{rem}\label{nueinsbewertung}
The monoid ${\cal V}(G/\hskip -3.5pt/U)$ is independent of the choice of the total order on $\Lambda^\dagger$.
For an element $\phi\in V(\la)^*\hookrightarrow \mathbb C[G/\hskip -3.5pt/U]$ one has (see \eqref{coordinatefunction})
$\nu(\phi)=\nu(e^\la p(\mathbf x))=(\la,\nu_1(p(\mathbf x)))$. So \eqref{coordinateGnachU} implies
that ${\cal V}(G/\hskip -3.5pt/U)$ depends only on the choice of the $\Psi$-weighted lex
order ``$>$''.
\end{rem}
\begin{rem}
For an element $\phi\in V(\la)^*\hookrightarrow \mathbb C[G/\hskip -3.5pt/U]$
the valuation $\nu(\phi)=(\la,\mathbf p)$ measures the ``vanishing behavior'' of $\varphi^*(\phi)\vert_{Z_S\times \{1\}}$ near $0\in Z_S$.
A serious geometric interpretation depends of course on the choice of the monomial order. If ``$>$'' is for
example the lexicographic order, then $\nu(\phi)=(\la,\mathbf p)$ implies that $\varphi^*(\phi)\vert_{Z_S\times \{1\}}$ vanishes with multiplicity $p_1$
on the hyperplane
$\{x_1\equiv 0\}\subset Z_S$ , the restriction $x_1^{-p_1}\varphi^*(\phi)\vert_{\{x_1\equiv 0\}\subset Z_S\times \{1\}}$
vanishes on the intersection $\{x_1\equiv 0\}\cap \{x_2\equiv 0\}$ with multiplicity $p_2$ etc.
If $\pi$ is an isomorphism in a neighborhood of the class of the identity $\bar{1\hskip-3pt \hbox{I}}$ in $G/\hskip-3.5pt /U$,
which is true for example in the PBW-type case, the valuation $\nu(\phi)=(\la,\mathbf p)$ measures in this sense the ``vanishing behavior'' of
$\phi$ near $\bar{1\hskip-3pt \hbox{I}}\in U^-$.
\end{rem}
\section{Filtrations and essential elements for $\mathcal{U}(\mathfrak n^-)$}\label{essential1}
We present a different description of the valuation monoid.
Given $f\in \mathfrak n^-$, we write $f^{(m)}$ for the divided power $\frac{f^m}{m!}$ in $\mathcal{U}(\mathfrak n^-)$.
Fix a birational sequence $S=(\beta_1,\ldots, \beta_N)$ for $U^-$. For $\mathbf m\in \mathbb N^N$,
we write $\mathbf f^{(\mathbf m)}=f_{\beta_1}^{(m_1)}\cdots f_{\beta_N}^{(m_N)}$ for the ordered product of the divided powers of the elements
and we call $wt(\mathbf m)=m_1\beta_1+\ldots+m_N\beta_N$ the {\it weight of
$\mathbf m$}. Recall that $\mathbf f^{(\mathbf m)}$ is an eigenvector of weight $-wt(\mathbf m)$ for the adjoint action of $T$ on $\mathcal{U}(\mathfrak n^-)$.

\subsection{Filtrations and essential elements} Let $\mathbb N^N$ be endowed with a $\Psi$-weighted lex order ``$>$''.
\begin{defn} Let $S=(\beta_1,\ldots, \beta_N)$ be a birational sequence for $U^-$.
A $(\mathbb N^N,>,S)$-{\it filtration} of $\mathcal{U}(\mathfrak n^-)$ is an increasing sequence of subspaces $\mathcal{U}(\mathfrak n^-)_{\bf \le m}$,
$\mathbf m\in \mathbb N^N$, defined by
$$
\mathcal{U}(\mathfrak n^-)_{\bf \le m}=\langle\mathbf f^{(\mathbf k)}=f_{\beta_1}^{(k_1)}\cdots f_{\beta_N}^{(k_N)}\mid \mathbf k\le \mathbf m \rangle.
$$
The associated graded vector space is denoted by $\mathcal{U}^{gr}(\mathfrak n^-)=
\bigoplus_{{\bf m}\in \mathbb N^N} \mathcal{U}^{gr}(\mathfrak n^-)_{\bf m}$, where
$$
\mathcal{U}^{gr}(\mathfrak n^-)_{\bf m}:=\mathcal{U}(\mathfrak n^-)_{\bf \le m}/\mathcal{U}(\mathfrak n^-)_{\bf < m}\hbox{\ and\ }
\mathcal{U}(\mathfrak n^-)_{\bf < m}=\langle \mathbf f^{(\mathbf k)}\mid \mathbf k< \mathbf m \rangle.
$$
\end{defn}

\begin{defn}
An element $\mathbf m\in \mathbb N^N$ is called {\it essential} for the $(\mathbb N^N,>,S)$-filtration if
$\mathcal{U}^{gr}(\mathfrak n^-)_{\bf m}\not=0$.
Denote by $es(\mathfrak n^-)\subseteq  \mathbb N^N$ the set of all {\it essential multi-exponents}.
\end{defn}
\begin{exam}\label{monomialseinsa}
Fix an enumeration $\Phi^+=\{\beta_1,\ldots,\beta_N\}$ of the positive roots
and set $S=({\beta_1},\ldots, {\beta_N})$. By Example~\ref{monomialseins} we know that
$S$ is a birational sequence for $U^-$. So for any choice of a $\Psi$-weighted lex order on $\mathbb N^N$
we get a $(\mathbb N^N,>,S)$-{\it filtration} of $\mathcal{U}(\mathfrak n^-)$ and an associated graded
space  $\mathcal{U}^{gr}(\mathfrak n^-)$. The ordered monomials form a PBW-basis for $\mathcal{U}(\mathfrak n^-)$, so
all ordered monomials are essential, independently of the choice of ``$>$''. It follows:
$$
es(\mathfrak n^-)=\mathbb N^N.
$$
\end{exam}
\begin{exam}\label{sl3}
If $\mathfrak g=\mathfrak{sl}_3$ and $w_0=s_1s_2s_1$, then $S=(f_{\alpha_1},f_{\alpha_2},f_{\alpha_1})$ is a
birational sequence by Example~\ref{monomialszwo}. Let ``$\ge$"
be the right lexicographic ordering on $\mathbb N^3$.
The Serre relation $f_{\alpha_1}^{(2)}f_{\alpha_2}- f_{\alpha_1}f_{\alpha_2}f_{\alpha_1}+f_{\alpha_2}f_{\alpha_1}^{(2)}=0$
implies $\mathcal{U}(\mathfrak n^-)_{\le (0,1,2)}/ \mathcal{U}(\mathfrak n^-)_{<(0,1,2)}=0$, so $(0,1,2)\not\in es(\mathfrak n^-)$.
\end{exam}
\begin{exam}\label{GZPattern}
Consider in Example~\ref{monomialszwo} the Lie algebra $\mathfrak{sl}_{n+1}$ and the reduced decomposition
$w_0=s_1(s_2s_1)(s_3s_2s_1)\cdots(s_n s_{n-1}\cdots s_2 s_1)$. Set
$$
S=({\alpha_{1}},{\alpha_{2}},{\alpha_{1}},{\alpha_{3}},{\alpha_{2}},{\alpha_{1}},\ldots,{\alpha_{n}},{\alpha_{n-1}},
{\alpha_{n-2}},\ldots, {\alpha_{2}},{\alpha_{1}}),
$$
so $S$ is a birational sequence. Fix on $\mathbb N^N$ the right lexicographic order. Then \cite{Li2} (Theorem 17, 18) implies:
$$
es(\mathfrak n^-)=\bigg\{(p_{1,1}, p_{2,2},p_{2,1},\ldots, p_{n,n},p_{n,n-1},\ldots, p_{n,1})\in \mathbb N^N\mid
\substack{p_{2,2}\ge p_{2,1}, \\ p_{3,3}\ge p_{3,2}\ge p_{3,1},\\ \ldots\\ p_{n,n}\ge p_{n,n-1}\ge \ldots \ge p_{n,2}\ge  p_{n,1}.}\bigg\}
$$
\end{exam}
\begin{exam}\label{string1}
Fix a reduced decomposition $w_0=s_{i_1}\cdots s_{i_N}$ of the longest word in the Weyl group of $\mathfrak g$,
we write $\underline{w}_0$ to indicate that we consider $w_0$ together with a fixed decomposition.
Set $S=({\alpha_{i_1}},\ldots, {\alpha_{i_N}})$, then $S$ is a birational sequence by Example~\ref{monomialszwo}.
We use on $\mathbb N^N$ the $\Psi$-weighted opposite lexicographic order defined in Example~\ref{rootorder},
for details see section~\ref{stringconeproof}. Let ${C}_{\underline{w}_0}\subset \mathbb R^N$ be the string cone defined in \cite{BZ, Li1}.
We will see in section~\ref{stringconeproof}, Theorem~\ref{stringCone}:
$$
es(\mathfrak n^-)={C}_{\underline{w}_0}\cap \mathbb Z^N.
$$
\end{exam}

\section{Induced filtrations and essential monoids}\label{filtandsemi}
A filtration on $\mathcal{U}(\mathfrak n^-)$ induces a filtration on a highest weight representation of $G$. We extend the notion
of an essential element to irreducible $G$-representations and show that the set of essential tuples is naturally endowed with the structure of a monoid.
\subsection{Induced filtrations and essential elements}
For a dominant weight $\la\in\Lambda^+$ let $V(\la)$ be the irreducible $G$-representation
of highest weight $\la$ and fix a highest weight vector $v_\la$. Given a $(\mathbb N^N,>,S)$-filtration on $\mathcal{U}(\mathfrak n^-)$,
we get an induced $(\mathbb N^N,>,S)$-filtration on $V(\la)$ as follows:
$$
V(\la)_{\bf \le m}:=\mathcal{U}(\mathfrak n^-)_{\bf \le m}  v_\la, \quad V(\la)_{\bf < m}:=\mathcal{U}(\mathfrak n^-)_{\bf <m}  v_\la,\quad \forall m\in \mathbb N^N.
$$
We set
$$
V^{gr}(\la)=\bigoplus_{{\bf m}\in \mathbb N^N} V(\la)_{\bf m},\hbox{\  where\ }V(\la)_{\bf m}= V(\la)_{\bf \le m}/V(\la)_{\bf <m}.
$$
Obviously we have  $\dim V(\la)_{\bf m}\le 1$.
\begin{defn}
A pair  $(\la,{\bf m})\in \Lambda^+\times \mathbb N^N$ is called {\it essential} for the $(\mathbb N^N,>,S)$-filtration of $V(\la)$ if $V(\la)_{\bf m}\not=0$.
If $(\la,\bf m)$ is essential, then $\mathbf f^{(\mathbf m)}v_\la$ is called an {\it essential vector} for $V(\la)$ and
$\bf m$ is called an {\it essential multi-exponent} for $V(\la)$.
Let  $es(\la)\subseteq \mathbb N^N$ be the set of all essential multi-exponents for $V(\la)$.
\end{defn}
The term {\it essential} makes sense only in connection with a fixed $(\mathbb N^N,>,S)$-filtration on $\mathcal{U}(\mathfrak n^-)$.
We often omit the reference to a filtration and assume tacitly that we have fixed one.
\begin{rem}
$\{\mathbf f^{(\mathbf m)}\mid \mathbf m\in es(\mathfrak n^-)\}\subset \mathcal{U}(\mathfrak n^-)$
and $\{\mathbf f^{(\mathbf m)}v_\la \mid \mathbf m\in es(\la)\}\subset V(\la)$ are bases consisting
of $T$-eigenvectors.
\end{rem}
We write $\lambda\gg 0$ if $\lambda$ is regular and has sufficiently large coefficients with respect to the fundamental weights. 
\begin{lem}\label{repessentialversusessential}\it
\begin{itemize}
\item[{\it i)}] If $(\la,\mathbf m)$ is essential for $V(\la)$, then $\mathbf m$ is essential  for $\mathcal{U}(\mathfrak n^-)$, so
$es(\la)\subseteq es(\mathfrak n^-)$.
\item[{\it ii)}] For a given $\la\in\Lambda^+$ there exists only a finite number of $\mathbf m\in es(\mathfrak n^-)$
such that $\mathbf f^{(\mathbf m)}v_\la\not=0$.
\item[{\it iii)}] If $\mathbf m$ is essential  for $\mathcal{U}(\mathfrak n^-)$, then $(\la,\mathbf m)$ is essential for $V(\la)$ for
$\lambda\gg 0$.
\end{itemize}
\end{lem}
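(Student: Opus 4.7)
The plan is to exploit the fact that the $(\mathbb N^N,>,S)$-filtration respects the adjoint $T$-grading on $\mathcal U(\mathfrak n^-)$. Each divided power $\mathbf f^{(\mathbf k)}$ is a $T$-eigenvector of weight $-wt(\mathbf k)$, so the subspaces $\mathcal U(\mathfrak n^-)_{\le\mathbf m}$ and $\mathcal U(\mathfrak n^-)_{<\mathbf m}$ are $T$-stable. This reduces both essentiality conditions to linear-independence statements inside a single weight space: $\mathbf m\in es(\mathfrak n^-)$ iff $\mathbf f^{(\mathbf m)}$ is independent from $\{\mathbf f^{(\mathbf k)}: \mathbf k<\mathbf m,\ wt(\mathbf k)=wt(\mathbf m)\}$, and $(\la,\mathbf m)$ is essential for $V(\la)$ iff the same relation holds after applying $v_\la$.

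Part (i) then follows immediately: if $\mathbf f^{(\mathbf m)}\in \mathcal U(\mathfrak n^-)_{<\mathbf m}$, applying to $v_\la$ places $\mathbf f^{(\mathbf m)}v_\la$ in $V(\la)_{<\mathbf m}$, so the contrapositive gives $es(\la)\subseteq es(\mathfrak n^-)$. For (ii), if $\mathbf f^{(\mathbf m)}v_\la\neq 0$, then $\la-wt(\mathbf m)$ must lie in the finite weight set of $V(\la)$, hence $wt(\mathbf m)=\sum m_i\beta_i$ takes only finitely many values $\gamma\in \mathcal R^+$; and for each such $\gamma$ the equation $\sum m_i\beta_i=\gamma$ with $m_i\ge 0$ has only finitely many solutions in $\mathbb N^N$, since each $m_i$ is bounded by $ht(\gamma)$.

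For (iii), fix $\mathbf m\in es(\mathfrak n^-)$ and set $\gamma:=wt(\mathbf m)$. By Kostant's multiplicity formula, $\dim V(\la)_{\la-\gamma}$ is an alternating sum of values of the partition function $P$; for $\la$ regular with fundamental-weight coefficients sufficiently large relative to $\gamma$, only the identity element of the Weyl group contributes positively, yielding
\[
\dim V(\la)_{\la-\gamma}=P(\gamma)=\dim \mathcal U(\mathfrak n^-)_{-\gamma}.
\]
The $T$-equivariant action map $u\mapsto u v_\la$ then restricts to an isomorphism $\mathcal U(\mathfrak n^-)_{-\gamma}\xrightarrow{\sim} V(\la)_{\la-\gamma}$, under which the weight-$-\gamma$ part of the filtration of $\mathcal U(\mathfrak n^-)$ corresponds to the weight-$(\la-\gamma)$ part of the induced filtration on $V(\la)$. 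The linear independence of $\mathbf f^{(\mathbf m)}$ from its lower-order neighbors of the same weight therefore transfers to $V(\la)$, proving $(\la,\mathbf m)\in es(\la)$. The main obstacle is precisely this last step: the qualitative bound ``$\la\gg 0$'' must be made quantitative enough for Kostant's formula to yield the required isomorphism, and one must accept that the required size genuinely depends on $\mathbf m$, since there is no uniform $\la$ working simultaneously for all essential multi-exponents; parts (i) and (ii) are essentially tautological once the weight-grading of the filtration is made explicit.
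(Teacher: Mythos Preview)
Your argument is correct and follows essentially the same route as the paper: parts (i) and (ii) are handled by the $T$-grading and elementary weight considerations, and part (iii) reduces to the injectivity of $u\mapsto u v_\la$ on the fixed weight space $\mathcal U(\mathfrak n^-)_{-wt(\mathbf m)}$ for $\la\gg 0$. The paper simply asserts this injectivity as well known, whereas you justify it via Kostant's multiplicity formula; this is a perfectly valid (and more explicit) way to see the same fact.
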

\begin{proof}
Part {\it i)} is obvious, {\it ii)} follows immediately by weight arguments. Let $\mathcal{U}(\mathfrak n^-)_{-wt(\mathbf m)}$ be the
$T$-weight space with respect to the adjoint action.
For all  $\la\gg 0$ one knows that the
map $\mathcal{U}(\mathfrak n^-)_{-wt(\mathbf m)}\hookrightarrow V(\la)$, $u\mapsto u v_\la$, is injective, which proves {\it iii)}.
\end{proof}
\begin{rem}
It might be that $\mathbf m$ is essential  for $\mathcal{U}(\mathfrak n^-)$ and $(\la,\mathbf m)$ is not essential for $V(\la)$, but
$\mathbf f^{(\mathbf m)}v_\la\not=0$ in $V(\la)$.
\end{rem}

\subsection{The monoid property}
\begin{prop}\label{plus}\it
\begin{itemize}
\item[{\it i)}] $es(\mathfrak n^-)$ is a monoid, i.e., $\bf m,m'\in es(\mathfrak n^-)$  implies ${(\bf m+m')}\in es(\mathfrak n^-)$.
\item[{\it ii)}] If $(\lambda, \bf m)$ is essential for $V(\la)$ and $(\mu , \bf m')$ is essential for $V(\mu)$, then
$(\lambda+\mu, \bf m+m')$ is essential for $V(\la+\mu)$.
\end{itemize}
\end{prop}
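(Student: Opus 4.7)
The plan is to prove (ii) first and then derive (i) from it. For (i), given $\mathbf m,\mathbf m'\in es(\mathfrak n^-)$, part (iii) of Lemma~\ref{repessentialversusessential} provides dominant weights $\lambda\gg 0$ and $\mu\gg 0$ for which $(\lambda,\mathbf m)$ is essential for $V(\lambda)$ and $(\mu,\mathbf m')$ is essential for $V(\mu)$; applying (ii) then gives that $(\lambda+\mu,\mathbf m+\mathbf m')$ is essential for $V(\lambda+\mu)$, and part (i) of the same lemma yields $\mathbf m+\mathbf m'\in es(\mathfrak n^-)$.

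The strategy for (ii) is to embed $V(\lambda+\mu)$ into $V(\lambda)\otimes V(\mu)$ via the Cartan map $\psi:v_{\lambda+\mu}\mapsto v_\lambda\otimes v_\mu$ and read off a leading term in the tensor product. Since each $f_{\beta_i}$ is primitive, the identity $\Delta(f_{\beta_i}^{(n)})=\sum_{k=0}^{n}f_{\beta_i}^{(k)}\otimes f_{\beta_i}^{(n-k)}$, together with $(a\otimes b)(c\otimes d)=ac\otimes bd$, multiplies out to
$$
\Delta\bigl(\mathbf f^{(\mathbf m+\mathbf m')}\bigr)=\sum_{\mathbf a+\mathbf b=\mathbf m+\mathbf m'}\mathbf f^{(\mathbf a)}\otimes\mathbf f^{(\mathbf b)},
$$
so that $\psi\bigl(\mathbf f^{(\mathbf m+\mathbf m')}v_{\lambda+\mu}\bigr)=\sum_{\mathbf a+\mathbf b=\mathbf m+\mathbf m'}\mathbf f^{(\mathbf a)}v_\lambda\otimes\mathbf f^{(\mathbf b)}v_\mu$.

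Next, I would equip $V(\lambda)\otimes V(\mu)$ with the filtration $F_\mathbf n:=\sum_{\mathbf a+\mathbf b\le\mathbf n}V(\lambda)_{\le\mathbf a}\otimes V(\mu)_{\le\mathbf b}$, for which the above coproduct formula shows $\psi(V(\lambda+\mu)_{\le\mathbf n})\subseteq F_\mathbf n$. Using the essential bases $\{\mathbf f^{(\mathbf a)}v_\lambda:\mathbf a\in es(\lambda)\}$ and $\{\mathbf f^{(\mathbf b)}v_\mu:\mathbf b\in es(\mu)\}$, together with translation-invariance of the monomial order, one checks that $F_\mathbf n$ has basis $\{\mathbf f^{(\mathbf a)}v_\lambda\otimes\mathbf f^{(\mathbf b)}v_\mu:\mathbf a\in es(\lambda),\mathbf b\in es(\mu),\mathbf a+\mathbf b\le\mathbf n\}$, whence
$$
F_\mathbf n/F_{<\mathbf n}\;=\;\bigoplus_{\substack{\mathbf a+\mathbf b=\mathbf n\\ \mathbf a\in es(\lambda),\,\mathbf b\in es(\mu)}}\mathbb C\cdot\overline{\mathbf f^{(\mathbf a)}v_\lambda\otimes\mathbf f^{(\mathbf b)}v_\mu},
$$
each summand being one-dimensional. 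In the image of $\psi\bigl(\mathbf f^{(\mathbf m+\mathbf m')}v_{\lambda+\mu}\bigr)$ in $F_{\mathbf m+\mathbf m'}/F_{<\mathbf m+\mathbf m'}$, the summand indexed by $(\mathbf m,\mathbf m')$ occurs (by hypothesis both are essential) and no other index pair contributes to that same direct summand, so this image is non-zero. Since also $\psi(V(\lambda+\mu)_{<\mathbf m+\mathbf m'})\subseteq F_{<\mathbf m+\mathbf m'}$ by the same coproduct computation, we conclude $\mathbf f^{(\mathbf m+\mathbf m')}v_{\lambda+\mu}\notin V(\lambda+\mu)_{<\mathbf m+\mathbf m'}$, which is precisely essentiality of $(\lambda+\mu,\mathbf m+\mathbf m')$.

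The main subtlety is the non-cancellation in the last step, which rests on the direct-sum decomposition of $F_\mathbf n/F_{<\mathbf n}$ over the pairs $(\mathbf a,\mathbf b)$ with $\mathbf a+\mathbf b=\mathbf n$. This decomposition uses the translation-invariance of the monomial order: if $\mathbf a_1<\mathbf a_2$ with $\mathbf a_1+\mathbf b_1=\mathbf a_2+\mathbf b_2=\mathbf n$, then $\mathbf b_1>\mathbf b_2$, so the overlap $V(\lambda)_{\le\mathbf a_1}\otimes V(\mu)_{\le\mathbf b_2}$ lies in $F_{\mathbf a_1+\mathbf b_2}\subseteq F_{<\mathbf n}$ and the two pieces are independent in the graded quotient.
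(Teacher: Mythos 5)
Your proof is correct and follows the same route the paper points to when it says ``the proof of \textit{ii)} is the same as in \cite{FFL1}'': deduce \textit{i)} from \textit{ii)} via Lemma~\ref{repessentialversusessential}, and prove \textit{ii)} by pushing $\mathbf f^{(\mathbf m+\mathbf m')}v_{\lambda+\mu}$ through the Cartan map into $V(\lambda)\otimes V(\mu)$, using the coproduct $\Delta(\mathbf f^{(\mathbf n)})=\sum_{\mathbf a+\mathbf b=\mathbf n}\mathbf f^{(\mathbf a)}\otimes\mathbf f^{(\mathbf b)}$ and the induced tensor filtration, with the monomial-order translation invariance guaranteeing no cancellation in the leading graded piece. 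The paper itself only references \cite{FFL1} (or, alternatively, the identification with the valuation monoid in Proposition~\ref{semigleichsemi1}), so your write-up supplies exactly the argument the paper outsources.
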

\begin{proof}
By Lemma~\ref{repessentialversusessential}, it suffices to prove {\it ii)}. The proof of {\it ii)} is the same as in \cite{FFL1},
or one uses the connection with the evaluation monoid proved in Proposition~\ref{semigleichsemi1}.
\end{proof}
\begin{coro}\label{semigroup}\it
The following sets are naturally endowed with the structure of a monoid:
$$
es(\mathfrak n^-)\subseteq \mathbb N^N,\  \Gamma(\la)=\bigcup_{n\in\mathbb N} n\times es(n\la)\subset \mathbb N\times \mathbb N^N\ \hbox{and\ }\
\Gamma=\bigcup_{\la\in\Lambda^+} \{\la\}\times es(\la) \subset \Lambda^+\times \mathbb N^N.
$$
\end{coro}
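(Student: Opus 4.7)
The plan is to derive all three monoid structures directly from Proposition~\ref{plus}, since componentwise addition on $\mathbb N^N$, on $\mathbb N\times\mathbb N^N$, and on $\Lambda^+\times\mathbb N^N$ is already associative with identity $\mathbf 0$; the only content is closure. For the first set $es(\mathfrak n^-)$, closure is precisely Proposition~\ref{plus}(i), so there is nothing further to check.

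For $\Gamma(\lambda)$, I would take two elements $(n,\mathbf m)$ and $(n',\mathbf m')$ in $\Gamma(\lambda)$, which by definition means $\mathbf m\in es(n\lambda)$ and $\mathbf m'\in es(n'\lambda)$. Applying Proposition~\ref{plus}(ii) with the two dominant weights $n\lambda$ and $n'\lambda$ yields that $(n\lambda+n'\lambda,\mathbf m+\mathbf m')$ is essential for $V((n+n')\lambda)$, so $\mathbf m+\mathbf m'\in es((n+n')\lambda)$ and hence $(n+n',\mathbf m+\mathbf m')\in\Gamma(\lambda)$. For the global set $\Gamma$, the argument is even more immediate: given $(\lambda,\mathbf m),(\mu,\mathbf m')\in\Gamma$, Proposition~\ref{plus}(ii) gives at once that $(\lambda+\mu,\mathbf m+\mathbf m')\in\Gamma$.

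So the corollary really reduces to the multiplicativity statement of Proposition~\ref{plus}(ii), and no genuine obstacle remains at this stage. The one point worth emphasizing in the write-up is that the natural addition on $\Gamma(\lambda)$ obtained by viewing it as a subset of $\mathbb N\times\mathbb N^N$ agrees with the one inherited from $\Gamma$ via the embedding $n\mapsto n\lambda$; this is automatic from the definitions but makes transparent why $\Gamma(\lambda)$ is a submonoid of the appropriate ambient monoid. The substantive work has all been done in Proposition~\ref{plus}, whose proof (either the direct argument of \cite{FFL1} or via the identification with the valuation monoid in Proposition~\ref{semigleichsemi1}) is where the real combinatorics of essential multi-exponents enters.
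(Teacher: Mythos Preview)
Your proposal is correct and matches the paper's approach exactly: the corollary is stated without proof in the paper, as it is an immediate consequence of Proposition~\ref{plus}, and your write-up supplies precisely the routine closure checks (via part~{\it(ii)} applied to $n\lambda$, $n'\lambda$ for $\Gamma(\lambda)$, and to arbitrary $\lambda,\mu$ for $\Gamma$) that justify this. Your remark about the compatibility of the addition on $\Gamma(\lambda)$ with the embedding $n\mapsto n\lambda$ is a nice clarification but not strictly necessary.
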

\begin{defn}\label{Def:Ess}
We call $es(\mathfrak n^-)$ the {\it essential monoid} of $\mathfrak n^-$,
$\Gamma(\la)$ is called the {\it essential monoid associated to $\la$}, and $\Gamma$
is called the {\it global essential monoid}. The real cone ${\cal C}_{(S,>)}=\mathbb R_{\ge 0} \Gamma \subset \Lambda_{\mathbb R}\times \mathbb R^N$
is called the {\it essential cone} associated to $(\mathbb N^N,>,S)$.
\end{defn}

\section{The dual essential basis and the equality of monoids}\label{dualessentialsection}
Starting with a fixed birational sequence $S=(\beta_1,\ldots,\beta_N)$ and a $\Psi$-weighted lex order ``$>$'' on $\mathbb N^N$,
we associate two monoids to $G$:
the global essential monoid $\Gamma$ and the value monoid ${\cal V}(G/\hskip -3.5pt/U)$.
We show that the two monoids can be naturally identified.
\subsection{The dual essential basis}
By construction,  $\mathbb B_\la=\{\mathbf f^{(\mathbf p)}v_\la \mid \bp \in es(\la) \}$ is a basis of $V(\la)$, let $\mathbb B_\la^*\subset V(\la)^*$
be the dual basis. For $\mathbf f^{(\mathbf p)}v_\la\in \mathbb B_\la$ denote by $\xi_{\la,\bp}\in \mathbb B_\la^*$ the corresponding dual element.
\begin{lem}\label{<1a}\it
Let $\bq=(q_i)_{i=1}^N$ be a multi-exponent (not necessarily essential). Then for any $\bp\in es(\la)$ such that $\bq<\bp$ we have
$\xi_{\la,\bp}(\mathbf f^{(\mathbf q)}v_\la)=0$.
\end{lem}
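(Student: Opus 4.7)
\medskip

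The plan is to reduce the statement to a description of the filtration piece $V(\la)_{<\mathbf p}$ in terms of the essential basis $\mathbb B_\la$. The key observation is that, by definition of the induced filtration, $\mathbf f^{(\mathbf q)}v_\la$ lies in $V(\la)_{\le \mathbf q}$, and since $\mathbf q<\mathbf p$, this subspace is contained in $V(\la)_{<\mathbf p}$. So it suffices to show that $V(\la)_{<\mathbf p}$ is spanned by those essential vectors $\mathbf f^{(\mathbf p')}v_\la\in\mathbb B_\la$ with $\mathbf p'<\mathbf p$; once we know this, applying the dual functional $\xi_{\la,\mathbf p}$ (which vanishes on every basis vector of $\mathbb B_\la$ other than $\mathbf f^{(\mathbf p)}v_\la$) yields the desired vanishing immediately.

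To establish this spanning property, I would argue by noetherian induction along the monomial order ``$>$''. Recall that every monomial order on $\mathbb N^N$ is a well-order, so descending chains terminate. For any $\mathbf k<\mathbf p$, I want to write $\mathbf f^{(\mathbf k)}v_\la$ as a $\mathbb C$-linear combination of essential basis elements $\mathbf f^{(\mathbf p')}v_\la$ with $\mathbf p'<\mathbf p$. If $\mathbf k\in es(\la)$, there is nothing to do. Otherwise, by definition of essential, $V(\la)_{\mathbf k}=V(\la)_{\le\mathbf k}/V(\la)_{<\mathbf k}=0$, so $\mathbf f^{(\mathbf k)}v_\la\in V(\la)_{<\mathbf k}$, meaning it can be written as a linear combination of $\mathbf f^{(\mathbf k')}v_\la$ with $\mathbf k'<\mathbf k<\mathbf p$. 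Applying the induction hypothesis to each such $\mathbf k'$ finishes the reduction.

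The last step is the clean duality conclusion: writing $\mathbf f^{(\mathbf q)}v_\la=\sum_{\mathbf p'\in es(\la),\,\mathbf p'<\mathbf p} c_{\mathbf p'}\,\mathbf f^{(\mathbf p')}v_\la$ and pairing with $\xi_{\la,\mathbf p}$ gives $\xi_{\la,\mathbf p}(\mathbf f^{(\mathbf q)}v_\la)=\sum c_{\mathbf p'}\delta_{\mathbf p',\mathbf p}=0$, since none of the indices $\mathbf p'$ can equal $\mathbf p$.

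The only genuinely non-trivial point is the inductive step, and even this is essentially automatic once one notes that monomial orders are well-orders on $\mathbb N^N$; I do not foresee any obstacle beyond checking this termination. The argument is formally analogous to the standard fact that in a Gröbner-basis style reduction one may replace any monomial by strictly smaller terms modulo a filtration whose subquotients are at most one-dimensional.
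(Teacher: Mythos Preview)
Your proof is correct and follows essentially the same approach as the paper: both arguments observe that $\mathbf f^{(\mathbf q)}v_\la$ lies in $V(\la)_{\le\mathbf q}\subseteq V(\la)_{<\mathbf p}$, express it in terms of essential basis vectors $\mathbf f^{(\mathbf q')}v_\la$ with $\mathbf q'\le\mathbf q<\mathbf p$, and then apply duality. The paper states the spanning property of the essential vectors as evident, whereas you spell out the underlying well-founded induction along the monomial order; this is the only difference.
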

\begin{proof}
The vector $\mathbf f^{(\mathbf q)}v_\la$ can be expressed as a linear combination of vectors $\mathbf f^{(\mathbf q')}v_\la$ with $\bq'\in es(\la)$
and $\bq'\le \bq<\bp$, which proves the lemma.
\end{proof}
Keeping in mind the isomorphisms in \eqref{coordinateGnachU}, the set
$\mathbb B_\Gamma:=\{\xi_{\la,\bf p}\mid (\la,{\bf p})\in \Gamma\}$
is a vector space basis for the algebra $\mathbb C[G/\hskip -3.5pt/U]$, we call it the {\it dual essential basis}.
Consider the structure constants $c_{\la,\bp;\mu,\bq}^{\la+\mu,\br}$, defined for $\bp\in {\rm es}(\lambda)$ and
$\bq\in {\rm es}(\mu)$ by
\[
\xi_{\la,\bp}\xi_{\mu,\bq}=\sum_{\br\in {\rm es}(\lambda+\mu)} c_{\la,\bp;\mu,\bq}^{\la+\mu,\br}\xi_{\la+\mu,\br}.
\]
{
\begin{lem}\label{sc1}\it
$\xi_{\la,\bp}\xi_{\mu,\bq}= \xi_{\la+\mu,\bp+\bq} + \sum_{\br\in {\rm es}(\lambda+\mu),\br >\bp+\bq} c_{\la,\bp;\mu,\bq}^{\la+\mu,\br}\xi_{\la+\mu,\br}$.
\end{lem}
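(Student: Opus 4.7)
The plan is to read off the structure constants $c_{\la,\bp;\mu,\bq}^{\la+\mu,\br}$ by evaluating the product $\xi_{\la,\bp}\xi_{\mu,\bq}$ on the essential basis vectors $\mathbf f^{(\mathbf r)}v_{\la+\mu}$ of $V(\la+\mu)$, since $\xi_{\la+\mu,\br}(\mathbf f^{(\mathbf s)}v_{\la+\mu})=\delta_{\br,\bs}$ for $\br,\bs\in\mathrm{es}(\la+\mu)$. The term $\xi_{\la+\mu,\bp+\bq}$ on the right hand side is well defined because $\bp+\bq\in\mathrm{es}(\la+\mu)$ by the monoid property (Proposition~\ref{plus}/Corollary~\ref{semigroup}).

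The key input is the embedding $V(\la+\mu)\hookrightarrow V(\la)\otimes V(\mu)$ sending $v_{\la+\mu}\mapsto v_\la\otimes v_\mu$, combined with the divided‑power coproduct formula
$$
\Delta(\mathbf f^{(\mathbf r)})\;=\;\sum_{\bp'+\bq'=\br}\mathbf f^{(\mathbf p')}\otimes\mathbf f^{(\mathbf q')},
$$
which follows from $\Delta(f^{(m)})=\sum_{i+j=m}f^{(i)}\otimes f^{(j)}$ applied factor‑by‑factor and expanded in the tensor product algebra. Since the product in $\mathbb C[G/\hskip-3.5pt/U]$ corresponds to the dual of the coproduct acting on the tensor of highest weight vectors, this yields
$$
(\xi_{\la,\bp}\xi_{\mu,\bq})\bigl(\mathbf f^{(\mathbf r)}v_{\la+\mu}\bigr)\;=\;\sum_{\bp'+\bq'=\br}\xi_{\la,\bp}\bigl(\mathbf f^{(\mathbf p')}v_\la\bigr)\cdot\xi_{\mu,\bq}\bigl(\mathbf f^{(\mathbf q')}v_\mu\bigr).
$$

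Now I would invoke Lemma~\ref{<1a}: a term in the above sum can only be nonzero if $\bp'\ge \bp$ and $\bq'\ge \bq$. By translation invariance of the monomial order ``$>$'', this forces $\br=\bp'+\bq'\ge \bp+\bq$. The combinatorial point (the one detail that needs a brief argument) is that equality $\br=\bp+\bq$ together with $\bp'\ge \bp$ and $\bq'\ge \bq$ forces $\bp'=\bp$ and $\bq'=\bq$: if $\bp'>\bp$ then translation invariance gives $\bp'+\bq'>\bp+\bq'\ge \bp+\bq$, contradicting $\bp'+\bq'=\bp+\bq$. Consequently, when $\br<\bp+\bq$ the entire sum vanishes, and when $\br=\bp+\bq$ only the term $(\bp',\bq')=(\bp,\bq)$ survives, contributing $\xi_{\la,\bp}(\mathbf f^{(\mathbf p)}v_\la)\cdot\xi_{\mu,\bq}(\mathbf f^{(\mathbf q)}v_\mu)=1\cdot 1=1$.

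Reading this back into the expansion $\xi_{\la,\bp}\xi_{\mu,\bq}=\sum_{\br\in\mathrm{es}(\la+\mu)}c_{\la,\bp;\mu,\bq}^{\la+\mu,\br}\xi_{\la+\mu,\br}$ gives $c_{\la,\bp;\mu,\bq}^{\la+\mu,\br}=0$ for every $\br\in\mathrm{es}(\la+\mu)$ with $\br<\bp+\bq$, and $c_{\la,\bp;\mu,\bq}^{\la+\mu,\bp+\bq}=1$, yielding the claimed identity. The one step that requires real care is the combinatorial argument with the monomial order used to eliminate the lower terms and pin down the leading coefficient; everything else is a direct manipulation using the coproduct and the definition of the dual basis.
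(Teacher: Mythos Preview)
Your proof is correct and follows essentially the same approach as the paper's: both evaluate the structure constant $c_{\la,\bp;\mu,\bq}^{\la+\mu,\br}$ by pairing against $\mathbf f^{(\mathbf r)}v_{\la+\mu}$ via the Cartan embedding $V(\la+\mu)\hookrightarrow V(\la)\otimes V(\mu)$, use the divided-power coproduct to split $\mathbf f^{(\mathbf r)}$ over the tensor factors, and then invoke Lemma~\ref{<1a} together with translation invariance of the monomial order to kill all terms with $\br<\bp+\bq$ and isolate the unique surviving summand when $\br=\bp+\bq$. Your write-up is in fact slightly cleaner in that you record the coproduct formula $\Delta(\mathbf f^{(\mathbf r)})=\sum_{\bp'+\bq'=\br}\mathbf f^{(\bp')}\otimes\mathbf f^{(\bq')}$ with unit coefficients directly, whereas the paper carries unspecified constants $d_{\la,\br';\mu,\br''}$ and only at the end computes the leading one to be~$1$.
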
}
\begin{proof}
It remains to prove that the structure constant $c_{\la,\bp;\mu,\bq}^{\la+\mu,\br}$ vanishes if $\br<\bp+\bq$ and $c_{\la,\bp;\mu,\bq}^{\la+\mu,\bp+\bq}=1$.
We have
\[
c_{\la,\bp;\mu,\bq}^{\la+\mu,\br}=(\xi_{\la,\bp}\T\xi_{\mu,\bq})(\bff^{(\br)}(v_\la\T v_\mu))=\sum_{\br'+\br''=\br}
d_{\la,\br';\mu,\br''}\xi_\bp(  \bff^{(\br')}v_\la)\xi_\bq(\bff^{(\br'')}v_\mu)),
\]
where $d_{\la,\br';\mu,\br''}$ are some constants (multiplicities of the corresponding terms).
Now if $\br<\bp+\bq$, then either $\br'<\bp$ or $\br''<\bq$ because otherwise
$\bp+\bq\le \br'+\bq\le\br'+\br''=\br$, which is a contradiction. By Lemma \ref{<1a}, the claim follows.
If $\br=\bp+\bq$ and $\br'\ne \bp$, then again either $\br'<\bp$ and $\xi_{\la,\bp}( \bff^{(\br')}v_\la)=0$, or $\br'>\bp$
and then $\br''<\bq=\br-\bp$, so $\xi_{\mu,\bq}( \bff^{(\br'')}v_\mu)=0$. Hence only the terms with $\br'=\bp$, $\br''=\bq$
contribute to $c_{\la,\bp;\mu,\bq}^{\la+\mu,\bp+\bq}$.
Now according to the arguments above, $c_{\la,\bp;\mu,\bq}^{\la+\mu,\bp+\bq}$ is equal to the product over all $i$
of the coefficients of $f_i^{(p_i)} \T f_i^{(q_i)}$ in $(f_i\T 1+ 1\T f_i)^{(p_i+q_i)}$,
but the latter is equal to one.
\end{proof}

\subsection{The global essential monoid and the valuation monoid}
\begin{prop}\label{semigleichsemi1}\it
The global essential monoid $\Gamma$ and the valuation monoid ${\cal V}(G/\hskip -3.5pt/U)$ coincide.
\end{prop}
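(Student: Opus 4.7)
The plan is to show that the dual essential basis element $\xi_{\la,\mathbf p}$ realizes the valuation value $(\la,\mathbf p)$, and then exploit the fact that $\mathbb B_\Gamma$ is a vector space basis of $\mathbb C[G/\hskip -3.5pt/U]$ to deduce both inclusions.

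First I would compute $\nu(\xi_{\la,\mathbf p})$ by pulling back along the birational map $\varphi$ of Lemma~\ref{Lem:3}. Expanding the exponentials gives
$$
\pi(z)v_\la = \sum_{\mathbf k\in\mathbb N^N}\mathbf z^{\mathbf k}\,\mathbf f^{(\mathbf k)}v_\la,
$$
so that
$$
\varphi^*(\xi_{\la,\mathbf p})(z,t) = e^\la(t)\sum_{\mathbf k\in\mathbb N^N}\mathbf z^{\mathbf k}\,\xi_{\la,\mathbf p}(\mathbf f^{(\mathbf k)}v_\la).
$$
By Lemma~\ref{<1a}, $\xi_{\la,\mathbf p}(\mathbf f^{(\mathbf k)}v_\la)=0$ whenever $\mathbf k<\mathbf p$, while duality yields $\xi_{\la,\mathbf p}(\mathbf f^{(\mathbf p)}v_\la)=1$. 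Hence the lowest term of $\xi_{\la,\mathbf p}(\pi(z)v_\la)$ with respect to ``$>$'' is exactly $\mathbf z^{\mathbf p}$, and therefore $\nu(\xi_{\la,\mathbf p})=(\la,\mathbf p)$. This gives the inclusion $\Gamma\subseteq\mathcal V(G/\hskip -3.5pt/U)$.

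For the reverse inclusion I would take an arbitrary nonzero $f\in\mathbb C[G/\hskip -3.5pt/U]$ and expand it in the dual essential basis,
$$
f = \sum_{(\la,\mathbf p)\in\Gamma} c_{\la,\mathbf p}\,\xi_{\la,\mathbf p},
$$
then group by weight to get $\varphi^*(f)=\sum_\la e^\la Q_\la(\mathbf x)$, where $Q_\la(\mathbf x)=\sum_{\mathbf p\in es(\la)}c_{\la,\mathbf p}\,\xi_{\la,\mathbf p}(\pi(\mathbf x)v_\la)$. Different weight components cannot cancel, so $\nu(f)$ is determined by first selecting the smallest $\la^*$ (in the fixed total order on $\Lambda$ extending the one on $\Lambda^\dagger$) with $Q_{\la^*}\neq 0$, and then by $\nu_1(Q_{\la^*})$. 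From Step~1 each $\xi_{\la^*,\mathbf p}(\pi(\mathbf x)v_{\la^*})$ has lowest term $\mathbf x^{\mathbf p}$ and no terms strictly below it; taking $\mathbf p^*=\min\{\mathbf p\in es(\la^*):c_{\la^*,\mathbf p}\neq 0\}$, the monomial $\mathbf x^{\mathbf p^*}$ occurs in $Q_{\la^*}$ only via the summand indexed by $\mathbf p^*$ and with nonzero coefficient $c_{\la^*,\mathbf p^*}$. Therefore $\nu(f)=(\la^*,\mathbf p^*)\in\Gamma$.

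The entire argument reduces to the computation $\nu(\xi_{\la,\mathbf p})=(\la,\mathbf p)$, which rests squarely on Lemma~\ref{<1a}; the rest is a formal basis manipulation. The only real bookkeeping subtlety is the extension of the order from $\Lambda^\dagger$ to all of $\Lambda^+$ (needed to compare distinct weight components), but as Remark~\ref{nueinsbewertung} already observes, the resulting monoid $\mathcal V(G/\hskip -3.5pt/U)$ does not depend on this choice, so this is a cosmetic rather than a substantive obstacle.
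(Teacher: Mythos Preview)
Your proof is correct and follows essentially the same route as the paper's: compute $\nu(\xi_{\la,\mathbf p})=(\la,\mathbf p)$ by expanding $\pi(z)v_\la$ and invoking the vanishing from Lemma~\ref{<1a}, then deduce the reverse inclusion by expanding an arbitrary $f$ in the dual essential basis and reading off the minimal term. The only cosmetic difference is in the bookkeeping for $\Lambda^+$ versus $\Lambda^\dagger$: the paper clears this by multiplying through by a character $e^\chi$ to push all occurring weights into $\Lambda^\dagger$, whereas you extend the order from $\Lambda^\dagger$ to $\Lambda$ and appeal to Remark~\ref{nueinsbewertung}; both are equivalent ways of handling the same technicality.
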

\begin{proof} We have to determine $\nu(\xi_{\la,\mathbf p})$ for all $\la\in\Lambda^+$ and $\bp\in es(\la)$.
Now by \eqref{coordinatefunction} we have
$$
\begin{array}{rcl}
(\xi_{\la,\mathbf p}\circ\phi)(s_1,\ldots,s_N,t)&=&\xi_{\la,\mathbf p}(\exp(s_1f_1)\cdots \exp(s_Nf_N) t.v_\la) \\
&=&
 \la(t).\xi_{\la,\mathbf p}(\exp(s_1f_1)\cdots \exp(s_Nf_N)v_\la)\\
&=& \la(t).\xi_{\la,\mathbf p}(\sum_{\mathbf m\in\mathbb N^N}\mathbf s^{\mathbf m}\mathbf f^{(\mathbf m)}v_\la).
\end{array}
$$
One can rewrite the last argument as a linear combination of essential vectors and one gets:
$$
(\xi_{\la,\mathbf p}\circ\phi)(s_1,\ldots,s_N,t)=
\la(t).\xi_{\la,\mathbf p}\bigg(\sum_{\mathbf m\in es(\la)}
\big( 
\mathbf s^{\mathbf m}+\sum_{\bq>\mathbf m} c_{\bq,\mathbf m}\mathbf s^{\bq}
\big)
\mathbf f^{(\mathbf m)}v_\la\bigg).
$$
It follows that
\begin{equation}\label{functionpi}
(\xi_{\la,\mathbf p}\circ\phi)(s_1,\ldots,s_N,t)=(
\mathbf x^{\bp} e^\la +\sum_{\bq>\mathbf p} c_{\bq,\mathbf p}\mathbf x^{\bq} e^\la)(s_1,\ldots,s_N,t),
\end{equation}
so $\nu(\xi_{\la,\mathbf p})=(\la,\mathbf p)$ and hence $\Gamma\subseteq {\cal V}(G/\hskip -3.5pt/U)$.
If $f\in \mathbb C[G/\hskip -3.5pt /U]$ is an arbitrary element, then it is of the form $f=\sum_{\mu\in\Lambda^+} \sum_{\mathbf p\in es(\mu)}
b_{\mu,\mathbf p} \xi_{\mu,\mathbf p}$. If $\rho$ is a character of $G$, then $\xi_{\rho,(0,\ldots,0)}=e^{\rho}$
and $e^{\rho}\xi_{\la,\mathbf m}=\xi_{\la+\rho,\mathbf m}$.
So there exists a character $\chi\in\Lambda^\dagger$ of $G$
such that $f=e^{-\chi} (\sum_{\mu'\in\Lambda^\dagger} \sum_{\mathbf p\in es(\mu')} c_{\mu',\mathbf p} \xi_{\mu',\mathbf p})$.
Let $(\la, \mathbf p)$ be the unique minimal tuple (with respect to ``$\triangleright$'', see section~\ref{SValuationsemigroup})
such that $c_{\la,\mathbf p}\not=0$. Then \eqref{functionpi} implies: $f\circ\phi=e^{-\chi}(
c_{\la,\mathbf p}\mathbf x^{\bp} e^\la +\sum_{(\la,\mathbf p)\triangleleft(\mu',\bq)} d_{\mu',\bq}\mathbf x^{\bq} e^{\mu'})$ and hence
$\nu(f)=(\la-\chi,\mathbf p)\in \Gamma$. It follows that $\Gamma= {\cal V}(G/\hskip -3.5pt/U)$.
\end{proof}
\begin{coro}\label{rank}\it
$\langle \Gamma\rangle_{\mathbb Z}=   \Lambda\times\mathbb Z^{N}$ and
$\langle es(\mathfrak n^-)\rangle_{\mathbb Z}= \mathbb Z^{N}$.
\end{coro}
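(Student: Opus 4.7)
The plan is to reduce both statements to a single computation of the full value group of $\nu$ after extending it to the fraction field. By Proposition~\ref{semigleichsemi1} one already has $\Gamma={\cal V}(G/\hskip -3.5pt/U)$, so $\nu$ extends uniquely to $\mathbb C(G/\hskip -3.5pt/U)^*$ via $\nu(p/q)=\nu(p)-\nu(q)$, and every element of $\langle\Gamma\rangle_{\mathbb Z}$ is of the form $\nu(p)-\nu(q)=\nu(p/q)$ for $p,q\in\mathbb C[G/\hskip -3.5pt/U]\setminus\{0\}$, and conversely. In particular,
\[
\langle\Gamma\rangle_{\mathbb Z}=\nu\bigl(\mathbb C(G/\hskip -3.5pt/U)^*\bigr).
\]

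Next I would exploit the birationality of $\varphi:{\cal Z}_S\to G/\hskip -3.5pt/U$ from Lemma~\ref{Lem:3}, which identifies $\mathbb C(G/\hskip -3.5pt/U)=\mathbb C({\cal Z}_S)$; under this identification each coordinate function $x_i$ and each character $e^{\la}$ (for $\la\in\Lambda$) becomes a nonzero rational function on $G/\hskip -3.5pt/U$. Directly from the defining formula \eqref{valuationdef2} one reads off
\[
\nu(x_i)=(0,e_i)\qquad\text{and}\qquad \nu(e^{\la})=(\la,\mathbf 0),
\]
where $e_i$ denotes the $i$-th standard basis vector of $\mathbb Z^N$. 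As $i$ ranges over $\{1,\ldots,N\}$ and $\la$ over $\Lambda$ these values already generate the full group $\Lambda\times\mathbb Z^N$; combined with the tautological inclusion $\Gamma\subseteq\Lambda^+\times\mathbb N^N$, this yields the first equality.

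For the second equality, I would simply project onto the second factor. Lemma~\ref{repessentialversusessential} gives $es(\mathfrak n^-)=\bigcup_{\la\in\Lambda^+}es(\la)=\mathrm{pr}_2(\Gamma)$, and since the projection $\mathrm{pr}_2:\Lambda\times\mathbb Z^N\to\mathbb Z^N$ is a group homomorphism it commutes with taking $\mathbb Z$-spans, so
\[
\langle es(\mathfrak n^-)\rangle_{\mathbb Z}=\mathrm{pr}_2\bigl(\langle\Gamma\rangle_{\mathbb Z}\bigr)=\mathrm{pr}_2(\Lambda\times\mathbb Z^N)=\mathbb Z^N.
\]

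I expect no significant technical obstacle beyond what is already encoded in Proposition~\ref{semigleichsemi1}: the corollary is essentially the observation that $\nu$ was designed on ${\cal Z}_S$ precisely so that the coordinates $x_i$ and the characters $e^{\la}$ realize the standard generators of $\Lambda\times\mathbb Z^N$, and birationality of $\varphi$ imports these as rational, hence fractional, functions on $G/\hskip -3.5pt/U$. The only mild point to verify along the way is that the generators $\omega_j,\eta_k$ of $\Lambda$ can be taken in $\Lambda^+$, so that each $(\la,\mathbf 0)$ with $\la\in\Lambda^+$ is genuinely produced by a regular function $\xi_{\la,\mathbf 0}\in\mathbb C[G/\hskip -3.5pt/U]$.
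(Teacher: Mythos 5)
Your proof is correct and follows essentially the same route as the paper: both arguments reduce the first equality, via Proposition~\ref{semigleichsemi1} and the birationality in Lemma~\ref{Lem:3}, to the surjectivity of $\nu$ on $\mathbb C({\cal Z}_S)^*$, and both obtain the second equality from the first by projecting onto the $\mathbb Z^N$-factor using Lemma~\ref{repessentialversusessential}. The only difference is cosmetic: the paper simply asserts the surjectivity of $\nu:\mathbb C({\cal Z}_S)^*\to\Lambda\times\mathbb Z^N$, whereas you make it explicit by reading off $\nu(x_i)=(0,\mathbf e_i)$ and $\nu(e^\la)=(\la,\mathbf 0)$ from \eqref{valuationdef2}, which is a welcome amplification rather than a departure.
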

\begin{proof}{
We have a $T$-equivariant birational map $\phi:{\cal Z}_S\rightarrow G/\hskip-3.5pt/U$. It follows
that a rational function $h$ on ${\cal Z}_S$ can be written as a quotient $\frac{f_1}{f_2}\circ\phi$,
where $f_1,f_2\in  \mathbb C[G/\hskip -3.5pt/U]$ are regular functions on $G/\hskip -3.5pt/U$.
It follows that $\nu(h)=\nu(f_1)-\nu(f_2)\in \langle \Gamma\rangle_{\mathbb Z}$.
}
Since $\nu:\mathbb C({\cal Z}_S)\rightarrow \Lambda\times\mathbb Z^{N}$ is surjective, it follows that
$\langle \Gamma\rangle_{\mathbb Z}=   \Lambda\times\mathbb Z^{N}$.
Now for all $\la\in \Lambda^+$, the pair $(\la,(0,\ldots,0))$ is an element of $\Gamma$,
so the second part follows immediately by Lemma~\ref{repessentialversusessential}.
\end{proof}
We close the subsection with remarks on the possible choices for $\Psi$. Denote by ${\bf e}_i\in \mathbb N^N$ the vector
${\bf e}_i=(\underbrace{0,\ldots,0}_{i-1},1,0\ldots,0)$, then $\Psi$ is
determined by $(\Psi_1,\ldots,\Psi_N)\in\mathbb N^N$,
where $\Psi_i:=\Psi({\bf e}_i)$.

\begin{rem}\label{changepsi}
Let $S=(\beta_1,\ldots,\beta_N)$ be a birational sequence and fix a $\Psi$-weighted lex order ``$>$''
on $\mathbb N^N$. If one replaces $\Psi$ by $k\Psi$ for some $k>0$, then the
associated global essential monoid does not change because the filtration
does not change. So one should think of the possible choices for $\Psi$
as rational points on the intersection $\cal W$ of a sphere with the positive quadrant
in $\mathbb R^N$.

Further, let $\tilde \Psi$ be an integral weight function such that  $\tilde \Psi(\mathbf m)$
depends only on the weight $\sum_{i=1}^N m_i\beta_i$. An example for such a function
is given in Example 9. If one replaces in the situation above
$\Psi$ by $\Psi +\tilde\Psi$, then the filtration may change but the associated
global essential monoid does not change, this can be proved using an easy weight argument.

In fact, we conjecture that $\cal W$ admits a finite triangulation such that the global essential monoid
stays constant for any choice of $\Psi$ in the interior of a simplex of the triangulation.
\end{rem}

\begin{rem}
It suffices to consider only (opposite) lexicographic
orders. One can switch from lexicographic orders to right lexicographic orders (and vice versa)
by replacing the birational system $S=(\beta_1,\ldots,\beta_N)$ by  $S'=(\beta_N,\ldots,\beta_1)$,
$\Psi$ by $\Psi'=(\Psi_N,\ldots,\Psi_1)$, then $\mathbf m\in \Gamma$ if and only if
$\mathbf m'=(m_N,\ldots,m_1)\in\Gamma'$.
\end{rem}

\subsection{ASM-sequence and a toric variety}
Let $\mathbb T$ be the torus $(\mathbb C^*)^N$.
The algebra $\mathbb C[\Gamma]$ associated to the monoid $\Gamma$
can be naturally endowed with the structure of a $T\times \mathbb T$-algebra by
\begin{equation}\label{TACTION}
(t,t_1,\dots,t_N)\cdot (\la,\bp):=\la(t)\big(\prod_{i=1}^N t_i^{p_i}\big)(\la,\bp).
\end{equation}
\begin{defn}\label{Def:ASM}
The pair $(S, >)$ is called a {\it sequence with an affine, saturated monoid} (short ASM) if $S=(\beta_1,\ldots,\beta_N)$
is a birational sequence, ``$>$'' is a $\Psi$-weighted lexicographic order and $\Gamma(S, >)$ is an affine, saturated monoid
(in the sense of \cite{CLS}, \S 1).
\end{defn}
\begin{lem}\label{toricvariety}\it
If $\Gamma$ is finitely generated, then
$\hbox{Spec\,} (\mathbb C[\Gamma])$ is naturally
endowed with the structure of a toric variety for $T\times \mathbb T$ of dimension $\dim T+\dim \mathfrak n^-$.
\end{lem}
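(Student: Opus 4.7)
The plan is to verify the three defining properties of an affine toric variety for the torus $T\times\mathbb T$ in turn: that $\mathrm{Spec}(\mathbb C[\Gamma])$ is an irreducible affine variety, that it contains $T\times\mathbb T$ as a dense open subset, and that the $T\times\mathbb T$-action extends from the open torus to the whole space. Dimension will come at the end from Corollary~\ref{rank}.

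First, by hypothesis $\Gamma$ is finitely generated, so $\mathbb C[\Gamma]$ is a finitely generated $\mathbb C$-algebra and $X:=\mathrm{Spec}(\mathbb C[\Gamma])$ is an affine scheme of finite type. Because $\Gamma\subset \Lambda\times\mathbb Z^N$ sits inside a free abelian group, $\mathbb C[\Gamma]$ embeds into the Laurent polynomial ring $\mathbb C[\Lambda\times\mathbb Z^N]$, which is an integral domain; hence $\mathbb C[\Gamma]$ is a domain and $X$ is an irreducible affine variety.

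Next I would identify the torus. The character group of $T\times\mathbb T$ is exactly $\Lambda\times\mathbb Z^N$, so $\mathbb C[T\times\mathbb T]=\mathbb C[\Lambda\times\mathbb Z^N]$. Corollary~\ref{rank} gives $\langle\Gamma\rangle_{\mathbb Z}=\Lambda\times\mathbb Z^N$, which means the embedding $\Gamma\hookrightarrow\Lambda\times\mathbb Z^N$ is the group completion of $\Gamma$; equivalently $\mathbb C[\Lambda\times\mathbb Z^N]$ is obtained from $\mathbb C[\Gamma]$ by inverting finitely many generators of $\Gamma$. This yields an open embedding $T\times\mathbb T\hookrightarrow X$ with dense image. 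The action defined in \eqref{TACTION} makes each element $(\lambda,\bp)\in\Gamma$ a weight vector of weight $(\lambda,\bp)$ for $T\times\mathbb T$, so the formula endows $\mathbb C[\Gamma]$ with a $\Lambda\times\mathbb Z^N$-grading by characters of $T\times\mathbb T$; equivalently we get an algebraic $T\times\mathbb T$-comodule structure compatible with the algebra structure, i.e. an algebraic $T\times\mathbb T$-action on $X$. By construction this action restricts on $T\times\mathbb T\subset X$ to the regular representation, so it extends the self-action of the open torus.

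Finally, the dimension of $X$ equals the Krull dimension of $\mathbb C[\Gamma]$, which for an affine semigroup algebra equals the rank of the group completion of $\Gamma$. Using Corollary~\ref{rank} once more,
\[
\dim X \;=\; \mathrm{rank}\bigl(\langle\Gamma\rangle_{\mathbb Z}\bigr)\;=\;\mathrm{rank}(\Lambda\times\mathbb Z^N)\;=\;(n+r)+N\;=\;\dim T+\dim\mathfrak n^-.
\]
There is no real obstacle here: the result is essentially a translation of Corollary~\ref{rank} and finite generation of $\Gamma$ into the standard toric dictionary (\cite{CLS}, \S1). The only point that deserves a line of justification is that the grading induced by \eqref{TACTION} actually integrates to an algebraic action, and this is immediate from the fact that the weights $(\lambda,\bp)$ live in the character lattice of $T\times\mathbb T$. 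Note that normality is \emph{not} asserted (and would require $\Gamma$ to be saturated in addition to finitely generated); the statement only claims a toric variety in the weak sense recalled above.
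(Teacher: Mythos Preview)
Your proof is correct and takes essentially the same approach as the paper: the paper's argument is a two-sentence invocation of the standard toric dictionary (finitely generated $\Gamma$ inside a lattice is an affine semigroup, hence $\mathrm{Spec}(\mathbb C[\Gamma])$ is an affine toric variety whose torus has character lattice $\mathbb Z\cdot\Gamma=\Lambda\times\mathbb Z^N$ by Corollary~\ref{rank}), and you have simply unpacked this dictionary in detail. Your remark that normality is not part of the claim is accurate and worth keeping.
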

\begin{proof}
The assumption that $\Gamma\subset \Lambda\times \mathbb Z^{N}$ is finitely generated makes it by construction into an affine semigroup,
hence $\hbox{Spec\,} (\mathbb C[\Gamma])$ is an affine toric variety whose torus has character lattice
$\mathbb Z\cdot \Gamma= \Lambda\times \mathbb Z^{N}$ by Corollary~\ref{rank}, which finishes the proof.
\end{proof}
\subsection{The essential polytope}\label{esspoly}
Let $S$ be a birational sequence and fix a $\Psi$-weighted lex order ``$>$''. Assume that the associated
global essential monoid $\Gamma(S,>)$ is finitely generated, so the essential cone  ${\cal C}_{(S,>)}$ is a rational
polyhedral convex cone.
Let $p:\Lambda_{\mathbb R}\times \mathbb R^N\rightarrow \Lambda_{\mathbb R}$ be the projection onto the first factor.
\begin{defn}
For $\lambda\in \Lambda^+_{\mathbb R}$, the {\it essential polytope} associated to $\lambda$ is defined as
$P(\lambda)=p^{-1}(\lambda)\cap {\cal C}_{(S,>)}$.
\end{defn}
{
\begin{rem}
If $\la\in\Lambda^+$ and $(S,>)$ is a sequence with an ASM, then the integral points of $P(\lambda)$
are exactly the pairs $(\la,\mathbf m)$ such that $\mathbf m$ is an essential multi-exponent for $\la$.
\end{rem} }

\section{Example: sequences with an ASM and the string cone}\label{stringconeproof}
Fix a reduced decomposition $\underline{w}_0=s_{i_1}\cdots s_{i_N}$ of the longest word in the Weyl group of $\mathfrak g$
and let ${C}_{\underline{w}_0}\subset \mathbb R^N$ be the associated string cone defined in \cite{Li1,BZ}
and set $S=({\alpha_{i_1}},\ldots, {\alpha_{i_N}})$. By Example~\ref{monomialszwo}, we know that
$S$ is a birational sequence. Let $\Psi:\mathbb N^N\rightarrow \mathbb N$ be the height weighted function
as in Example~\ref{rootorder}. We fix on $\mathbb N^N$ the associated $\Psi$-weighted opposite lexicographic order.
For an element $\mathbf m\in {C}_{\underline{w}_0}$ denote by $G(\mathbf m)$ the corresponding element
of the global crystal basis of $\mathcal{U}(\mathfrak n^-)$ \cite{Ka}, specialized at $q=1$.
The cone ${\cal C}_{\underline{w}_0}\subset\Lambda_{\mathbb R}\times\mathbb R^N$ (see \cite{Li1,BZ}, compare also \cite{AB})
is defined to be the intersection of $\Lambda_{\mathbb R}\times {C}_{\underline{w}_0}$ with $N$ half-spaces:
\begin{equation}\label{weightequation}
{\cal C}_{\underline{w}_0}=\left\{(\lambda,\mathbf m)\in \Lambda_{\mathbb R}\times {C}_{\underline{w}_0}\mid
m_k \le \langle\lambda,\alpha_{i_k}^\vee\rangle -\sum_{\ell=k+1}^N\langle \alpha_{i_\ell},\alpha_{i_k}^\vee \rangle m_\ell,\quad k=1,\ldots,N.
\right\}
\end{equation}
\begin{thm}\label{stringCone}
$es(\mathfrak n^-)={C}_{\underline{w}_0}\cap \mathbb Z^n$.
\end{thm}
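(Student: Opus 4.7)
The plan is to prove the equality by combining one inclusion with a matching count of cardinalities in each $T$-weight stratum. Since every $\beta_j=\alpha_{i_j}$ is simple, the height weight $\Psi(\mathbf m)=\sum_j m_j$ is determined by the $T$-weight $wt(\mathbf m)\in\mathcal R^+$, so on each stratum $S_\nu=\{\mathbf m\in\mathbb N^N\mid wt(\mathbf m)=\nu\}$ the order ``$>$'' coincides with the opposite of $<_{\mathrm{lex}}$. By construction of the $(\mathbb N^N,>,S)$-filtration, the one-dimensional subquotients $\mathcal{U}^{gr}(\mathfrak n^-)_{\mathbf m}$ are $T$-graded and together span $\mathcal{U}(\mathfrak n^-)$, so
$$
|es(\mathfrak n^-)\cap S_\nu|=\dim\mathcal{U}(\mathfrak n^-)_{-\nu}.
$$
On the other hand, the Berenstein-Zelevinsky-Littelmann string parametrization \cite{BZ,Li1} provides a basis $\{G(\mathbf m)\mid \mathbf m\in C_{\underline{w}_0}\cap\mathbb Z^N\}$ of $\mathcal{U}(\mathfrak n^-)$ at $q=1$, whence $|C_{\underline{w}_0}\cap\mathbb Z^N\cap S_\nu|=\dim\mathcal{U}(\mathfrak n^-)_{-\nu}$. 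Hence it suffices to establish the inclusion $C_{\underline{w}_0}\cap\mathbb Z^N\subseteq es(\mathfrak n^-)$.

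The crucial ingredient is the sharp upper-triangularity statement
$$
G(\mathbf m)\equiv \mathbf f^{(\mathbf m)}\pmod{\mathcal{U}(\mathfrak n^-)_{<\mathbf m}}\quad\text{for every }\mathbf m\in C_{\underline{w}_0}\cap\mathbb Z^N.
$$
Granted this, the class of $\mathbf f^{(\mathbf m)}$ in the one-dimensional subquotient $\mathcal{U}^{gr}(\mathfrak n^-)_{\mathbf m}$ equals the class of the basis vector $G(\mathbf m)$ and is therefore nonzero, which by definition means $\mathbf m\in es(\mathfrak n^-)$. This gives the required inclusion.

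The main obstacle is the sharp upper-triangularity itself. Informally, the first string coordinate $m_1=\max\{k\mid \tilde e_{i_1}^k G(\mathbf m)\ne 0\}$ records that $G(\mathbf m)$ is maximally left-divisible by $f_{\alpha_{i_1}}^{(m_1)}$, and iterating this analysis with $\tilde e_{i_2},\tilde e_{i_3},\ldots$ recovers the full string datum $\mathbf m$; the corresponding ordered monomial produced by this successive left-divisibility extraction is precisely $\mathbf f^{(\mathbf m)}$. Making the argument rigorous requires combining Kashiwara's theory of string data with a careful analysis of how the quantum Serre straightening at $q=1$ interacts with the opposite lex order --- verifying that every correction term produced when rewriting the iterated divided-power product in ordered form lies strictly below $\mathbf m$ in that order. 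This compatibility is essentially the content of the correspondence between string cones and standard monomials established in \cite{Li1,BZ}.
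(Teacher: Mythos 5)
Your cardinality reduction is sound: within each $T$-weight stratum $S_\nu$ the filtration is $T$-equivariant and exhaustive with $0$- or $1$-dimensional steps, so $|es(\mathfrak n^-)\cap S_\nu|=\dim\mathcal{U}(\mathfrak n^-)_{-\nu}=|C_{\underline{w}_0}\cap\mathbb Z^N\cap S_\nu|$, and hence one inclusion suffices.

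The gap is in the step where you conclude from the sharp upper-triangularity that $C_{\underline{w}_0}\cap\mathbb Z^N\subseteq es(\mathfrak n^-)$. The relation
$G(\mathbf m)\equiv \mathbf f^{(\mathbf m)}\pmod{\mathcal{U}(\mathfrak n^-)_{<\mathbf m}}$
only shows that $G(\mathbf m)$ and $\mathbf f^{(\mathbf m)}$ have the same class in $\mathcal{U}(\mathfrak n^-)_{\le\mathbf m}/\mathcal{U}(\mathfrak n^-)_{<\mathbf m}$; it does \emph{not} show that this class is nonzero. Being a basis vector of the ambient space does not prevent $G(\mathbf m)$ from lying entirely in $\mathcal{U}(\mathfrak n^-)_{<\mathbf m}$. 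Concretely, the following configuration is not ruled out by your ingredients: $S_\nu=\{\mathbf k_1<\mathbf k_2<\mathbf k_3\}$ with $d=\dim\mathcal{U}(\mathfrak n^-)_{-\nu}=2$, $C_{\underline{w}_0}\cap S_\nu=\{\mathbf k_1,\mathbf k_3\}$, $es(\mathfrak n^-)\cap S_\nu=\{\mathbf k_1,\mathbf k_2\}$. Here both sets have size $2$, sharp upper-triangularity gives $G(\mathbf k_1)=\mathbf f^{(\mathbf k_1)}$ and $G(\mathbf k_3)=\mathbf f^{(\mathbf k_3)}+c_2\mathbf f^{(\mathbf k_2)}+c_1\mathbf f^{(\mathbf k_1)}$, and with $\mathbf f^{(\mathbf k_3)}=a\mathbf f^{(\mathbf k_1)}+b\mathbf f^{(\mathbf k_2)}$ the pair $\{G(\mathbf k_1),G(\mathbf k_3)\}$ is still linearly independent as long as $b+c_2\neq 0$. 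In other words, the counting plus the triangularity only yield the one-sided inequality $|\{\mathbf k\le\mathbf m:\mathbf k\in C_{\underline{w}_0}\cap S_\nu\}|\le|\{\mathbf k\le\mathbf m:\mathbf k\in es(\mathfrak n^-)\cap S_\nu\}|$, and two counting functions with the same total sum can satisfy this pointwise inequality without being equal.

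What is actually needed, and what the paper supplies as a separate statement (Lemma~\ref{stringorder}~\textit{i)}), is the \emph{reverse} rewriting: for $\mathbf m\notin C_{\underline{w}_0}$, $\mathbf f^{(\mathbf m)}$ is a linear combination of $\mathbf f^{(\mathbf m')}$ with $\mathbf m'\in C_{\underline{w}_0}$ and $\mathbf m'<\mathbf m$. This gives $es(\mathfrak n^-)\subseteq C_{\underline{w}_0}$ directly, and then your cardinality count closes the argument. That reverse statement is not the content of the string-cone/standard-monomial correspondence you cite; the paper proves it by an induction on $\ell(w)$ combined with Kashiwara's Proposition~6.4.3, which is genuinely additional work beyond the unitriangularity of Proposition~10.3 of \cite{Li1}. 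So the approach can be repaired, but only by proving the opposite inclusion, not the one you reduced to.
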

\begin{coro}\it
$(S,>)$ is a sequence with an ASM.
\end{coro}
\vskip 1pt\noindent
{\it Proof of the corollary\/}.
It follows by Theorem~\ref{stringCone}
and \cite{BZ, Li1} that $\Gamma$ is the monoid of integral points of the rational  polyhedral convex cone
${\cal C}_{\underline{w}_0}$, and hence is finitely generated and saturated.
\qed
\vskip 1pt\noindent
{\it Proof of the theorem\/}. Part {\it i)} of Lemma~\ref{stringorder} below shows that the elements $\mathbf f^{(\mathbf m)}$ with
$\mathbf m\not\in {C}_{\underline{w}_0}$ are not essential, so $es(\mathfrak n^-)\subseteq {C}_{\underline{w}_0}$,
and part {\it ii)} shows that the monomials $\mathbf f^{(\mathbf m)}$, $\mathbf m\in {C}_{\underline{w}_0}$, form a basis
of $\mathcal{U}(\mathfrak n^-)$, which implies: $es(\mathfrak n^-)={C}_{\underline{w}_0}\cap \mathbb Z^n$.
\qed

\begin{lem}\label{stringorder}\it
\begin{itemize}
\item[{\it i)}] If $\mathbf m\in \mathbb N^N$ but $\mathbf m\not\in {C}_{\underline{w}_0}$, then $\mathbf f^{(\mathbf m)}$
is a linear combination of a finite number of monomials $\mathbf f^{\mathbf m'}$ such that $\mathbf m'\in {C}_{\underline{w}_0}$
and $\mathbf m'<\mathbf m$.
\item[{\it ii)}] If $\mathbf m\in {C}_{\underline{w}_0}$, then $\mathbf f^{(\mathbf m)}=\sum_{ \mathbf m'\le \mathbf m} a_{\mathbf m'}G(\mathbf m')$
such that $a_{\mathbf m}=1$.
\end{itemize}
\end{lem}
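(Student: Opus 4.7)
The plan is to establish part (i) first, using (quantum) Serre rewriting to handle monomials outside the string cone, and then to derive part (ii) by comparison with the string parametrization of the global crystal basis.

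For part (i), I would argue by strong induction on $\mathbf m$ with respect to ``$>$''. If $\mathbf m\notin C_{\underline{w}_0}$, then some defining inequality of the string cone fails at some position $k$. This failure is precisely what is captured by a Serre-type commutation relation expressing $\mathbf f^{(\mathbf m)}$ as a linear combination of monomials $\mathbf f^{(\mathbf m'')}$ all of the same weight $wt(\mathbf m)$; in particular $\Psi(\mathbf m'')=\Psi(\mathbf m)$ for every term. The height-weighted opposite lex order is designed exactly so that such Serre moves are strict decreases: the $\Psi$-part is preserved, and the opposite direction of the lex refinement flips comparisons so that the Serre-type shifts of exponent mass toward later string positions produce strictly smaller tuples. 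Applying the inductive hypothesis to each $\mathbf m''\notin C_{\underline{w}_0}$ that arises completes the rewriting, and the process terminates because ``$>$'' is a well-order on the finite set of multi-exponents of a given weight.

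For part (ii), I would invoke the string parametrization of the global crystal basis: for $\mathbf m\in C_{\underline{w}_0}$ one has $G(\mathbf m)=\tilde F_{i_1}^{m_1}\cdots\tilde F_{i_N}^{m_N}\cdot 1$ via iterated Kashiwara lowering operators. Using the compatibility between $\tilde F_i$ and the divided power $f_i^{(m)}$ (specialized at $q=1$) and expanding, one obtains
\[
G(\mathbf m)=\mathbf f^{(\mathbf m)}+\sum_{\substack{\mathbf m'\ne \mathbf m\\ wt(\mathbf m')=wt(\mathbf m)}} c_{\mathbf m'}\,\mathbf f^{(\mathbf m')},
\]
where every $\mathbf m'$ shares the height of $\mathbf m$ and is strictly smaller in the opposite lex order. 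Part (i) already ensures that $\{\mathbf f^{(\mathbf m')}:\mathbf m'\in C_{\underline{w}_0}\}$ spans $\mathcal U(\mathfrak n^-)$, and since $C_{\underline{w}_0}\cap\mathbb Z^N$ parametrizes the crystal basis, the cardinality in each weight space equals the dimension; hence this is a basis and every $\mathbf m'$ above must itself lie in $C_{\underline{w}_0}$. The resulting transition matrix between $\{G(\mathbf m')\}$ and $\{\mathbf f^{(\mathbf m')}\}$ (indexed by $C_{\underline{w}_0}$ intersected with a fixed weight space) is therefore unitriangular with $1$'s on the diagonal, and inverting it yields the formula of (ii) with $a_{\mathbf m}=1$.

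The main obstacle is the Serre step in part (i): one must verify that every rewriting rule forced by a failed string cone inequality at position $k$ produces multi-indices strictly smaller than $\mathbf m$ in the height-weighted opposite lex order. This is a combinatorial match between the inequalities defining $C_{\underline{w}_0}$ (see \cite{Li1,BZ}) and the chosen ordering: the $k$-th inequality encodes a specific (quantum) Serre identity among $f_{i_k}$ and the later $f_{i_\ell}$, and the opposite lex refinement, read right-to-left across the string and weighted by height, is precisely calibrated so that this identity strictly decreases the tuple. Checking this calibration carefully case by case is the technical heart of the argument.
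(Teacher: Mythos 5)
Your proposal takes a genuinely different route from the paper, and unfortunately it has gaps at both of its load-bearing steps.

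For part (i), you reduce the claim to the assertion that a failed string-cone inequality at position $k$ is ``precisely captured by a Serre-type commutation relation'' that strictly decreases the tuple in the height-weighted opposite lex order. This is the crux of your argument, and it is exactly what is not established. The string cone of Berenstein--Zelevinsky and Littelmann is cut out by a family of inequalities (coming from $i$-trails, or from Littelmann's recursive description) that are not in natural bijection with Serre relations, and there is no general statement that ``the $k$-th inequality encodes a specific quantum Serre identity among $f_{i_k}$ and the later $f_{i_\ell}$.'' In very small examples such as $\mathfrak{sl}_3$ (see Example~\ref{sl3}) the picture looks like this, but already in higher rank the correspondence breaks down. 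The paper avoids this problem entirely: it does \emph{not} do Serre rewriting. Instead it inducts on the length of a Weyl group element $w$ (using that for a prefix $\underline{w'}$ of $\underline{w}_0$ the string cone ${C}_{\underline{w'}}$ sits inside ${C}_{\underline{w}_0}$ as the face where the trailing coordinates vanish) and then, in the crucial step, invokes Proposition~6.4.3 of \cite{Ka} — the statement about how $f_{i}^{(m)}$ acts on the global crystal basis in terms of string lengths — to show that $f_{i_1}^{(m_1)}G(\mathbf m')$ is a combination of $G(\mathbf b)$ with $b_1>m_1$ and hence strictly smaller. Without that (or some substitute of comparable strength) your induction has no way to see that the terms produced by rewriting are smaller.

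For part (ii), you write $G(\mathbf m)=\tilde F_{i_1}^{m_1}\cdots\tilde F_{i_N}^{m_N}\cdot 1$ and then expand ``using the compatibility between $\tilde F_i$ and $f_i^{(m)}$.'' This identity holds for the crystal (i.e.\ at $q=0$, for the crystal element $b(\mathbf m)$), not for the global basis element $G(\mathbf m)$; the global basis is only triangularly related to such ordered monomials, and establishing that triangularity is exactly the content of (ii). The paper does not re-derive it: it cites Proposition~10.3 of \cite{Li1} directly (in the Weyl-group-element version) and then \emph{uses} part (ii) as an input in the proof of part (i). Your logical dependency is reversed — you want to deduce (ii) from (i) — which is not impossible in principle, but the counting argument you give (matching cardinalities of $C_{\underline{w}_0}\cap\mathbb Z^N$ per weight space against dimensions) presupposes the very parametrization of the crystal basis by the string cone that (ii) encapsulates, and it does not justify your intermediate claim that every $\mathbf m'$ occurring in the expansion of $G(\mathbf m)$ lies in $C_{\underline{w}_0}$. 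You should either cite \cite{Li1} for (ii) and then structure (i) around Kashiwara's Proposition~6.4.3 as the paper does, or fill in the Serre-rewriting claim with an actual argument, which I do not think exists in the generality required.
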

Before we come to the proof, note that as an immediate corollary one gets:
\begin{coro}\it
The $(\mathbb N^N,>,S)$-filtration of $\mathcal{U}(\mathfrak n^-)$ is compatible with Kashiwara's global crystal basis.
In particular, given a dominant weight $\la$, $\mathbf m\in   \mathbb N^N$ is an essential
multi-exponent for $V(\la)$ if and only if $G(\mathbf m).v_\la\not=0$.
\end{coro}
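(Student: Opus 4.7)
The plan is to deduce both assertions directly from Lemma~\ref{stringorder}, using induction along the monomial order ``$>$'' together with Kashiwara's theorem that $\{G(\mathbf{m}).v_\la\mid G(\mathbf{m}).v_\la\neq 0\}$ is a $\mathbb{C}$-basis of $V(\la)$.

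First I would establish the key intermediate claim: for every $\mathbf{m}\in \mathbb{N}^N$,
\[
\mathcal{U}(\mathfrak{n}^-)_{\le\mathbf{m}}=\operatorname{span}_{\mathbb{C}}\bigl\{G(\mathbf{m}')\;\bigm|\;\mathbf{m}'\in C_{\underline{w}_0}\cap\mathbb{Z}^N,\ \mathbf{m}'\le\mathbf{m}\bigr\}.
\]
One inclusion uses Lemma~\ref{stringorder}(ii): for $\mathbf{m}\in C_{\underline{w}_0}$, $\mathbf{f}^{(\mathbf{m})}=G(\mathbf{m})+\sum_{\mathbf{m}'<\mathbf{m}}a_{\mathbf{m}'}G(\mathbf{m}')$, so inductively (along ``$>$'') every $G(\mathbf{m}')$ with $\mathbf{m}'<\mathbf{m}$ already lies in $\mathcal{U}(\mathfrak{n}^-)_{<\mathbf{m}}$, hence $G(\mathbf{m})\in\mathcal{U}(\mathfrak{n}^-)_{\le\mathbf{m}}$; for $\mathbf{m}\notin C_{\underline{w}_0}$ use Lemma~\ref{stringorder}(i) to rewrite $\mathbf{f}^{(\mathbf{m})}$ in terms of strictly smaller $\mathbf{f}^{(\mathbf{m}'')}$ and fall back to the cone case. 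The reverse inclusion is immediate from Lemma~\ref{stringorder}(ii) applied to each $\mathbf{m}'\le\mathbf{m}$. This establishes the \emph{compatibility} statement of the corollary, since every filtration subspace is a span of crystal-basis elements.

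Next I would transfer this compatibility to $V(\la)$ by applying $\cdot v_\la$: the above gives
\[
V(\la)_{\le\mathbf{m}}=\operatorname{span}\bigl\{G(\mathbf{m}').v_\la\;\bigm|\;\mathbf{m}'\in C_{\underline{w}_0}\cap\mathbb{Z}^N,\ \mathbf{m}'\le\mathbf{m}\bigr\},
\]
and analogously for $V(\la)_{<\mathbf{m}}$. By Kashiwara's theorem, the nonzero $G(\mathbf{m}').v_\la$ are linearly independent, so the quotient $V(\la)_{\le\mathbf{m}}/V(\la)_{<\mathbf{m}}$ is one-dimensional exactly when $G(\mathbf{m}).v_\la\neq 0$, and is zero otherwise. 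Unpacking the definition of essentiality yields the ``in particular'' clause.

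Given Lemma~\ref{stringorder}, which does the real combinatorial work, this proof is almost formal; the only delicate point is making sure the double induction (across ``$>$'' and between the two parts of Lemma~\ref{stringorder}) is well-founded. Since ``$>$'' refines the height-weighted partial order $>_\Psi$ and each fixed weight stratum is finite, the induction terminates, so this is not a serious obstacle. The main conceptual content, namely the triangularity of the PBW divided-power basis against the global crystal basis with respect to ``$>$'', is already encoded in Lemma~\ref{stringorder}(ii); what remains is purely an invocation of Kashiwara's basis property of $V(\la)$.
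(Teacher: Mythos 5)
Your proposal is correct and is essentially the paper's own (implicit) argument: the paper presents this corollary as an immediate consequence of the triangularity in Lemma~\ref{stringorder} together with Kashiwara's theorem that the nonzero vectors $G(\mathbf m)v_\la$ form a basis of $V(\la)$, which is exactly the deduction you spell out. The only slip is bookkeeping: part (i) of Lemma~\ref{stringorder} is what you need for the inclusion $\mathcal{U}(\mathfrak n^-)_{\le\mathbf m}\subseteq\operatorname{span}\{G(\mathbf m')\mid \mathbf m'\in C_{\underline{w}_0}\cap\mathbb Z^N,\ \mathbf m'\le\mathbf m\}$ (to handle generators $\mathbf f^{(\mathbf k)}$ with $\mathbf k\notin C_{\underline{w}_0}$), not for the inclusion you attach it to, but all the needed ingredients are present and used correctly.
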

\vskip 1pt\noindent
{\it Proof of the lemma\/}.
The string cone is defined more generally for a reduced decomposition $w=s_{i_1}\cdots s_{i_q}$ of an arbitrary
element of $W$, one has ${C}_{\underline{w}}\subset \mathbb N^q$. Moreover, if one completes the reduced decomposition
of $w$ to a reduced decomposition of $w_0=s_{i_1}\cdots s_{i_q}\cdot s_{i_{q+1}}\cdots s_{i_N}$, then
${C}_{\underline{w}}\subseteq {C}_{\underline{w}_0}$ is the subset of $N$-tuples such that $m_{q+1}=\ldots,m_N=0$
(see \cite{Li1}).

The monomials $\mathbf f^{(\mathbf m)}$ are eigenvectors for the action of $T$. So given a linear dependence relation
between some monomials, we may assume that all monomials occurring in the relation
have the same $T$-weight. So the only relevant order is the opposite lexicographic order because in this case we have $\Psi(\mathbf m)=\Psi(\mathbf m')$
for all terms occurring in such a linear dependence relation.

To prove a linear dependence relation or  a linear independence relation between monomials in $\mathcal{U}(\mathfrak n^-)$, it is sufficient
to do this for the terms applied to a highest weight vector $v_\la$ for $\la$ sufficiently large.
We often use  the fact that the global crystal basis for the corresponding quantum group and all necessary structure constants
are defined over $\mathbb C[q,q^{-1}]$, so one can specialize the basis at $q=1$ to conclude results for $\mathcal{U}(\mathfrak n^-)$.
For details see \cite{Ka},\cite{Ha} or \cite{Li1}. Keeping this in mind, part {\it ii)} has been already proved
in \cite{Li1}, Proposition 10.3, for $\mathbf f^{(\mathbf m)}$, $\mathbf m\in {C}_{\underline{w}}\subset \mathbb N^q$.
It remains to prove part {\it i)}.

We will prove the Weyl group element version of {\it i)} by induction on the length of $w$, the result for $w_0$ implies
then part {\it i)} above. If $\ell(w)=1$ and $w=s_{i}$, nothing has to be proven: all $\mathbf m=(m_1)$
are elements of ${C}_{\underline{w}}$. Suppose now $w=s_{i_1}\cdots s_{i_{j+1}}$ for some $j\ge 1$,
and we assume that the lemma holds for all elements $w'\in W$ of length smaller than or equal to $j$ and for all possible
reduced decompositions of these elements.
Suppose that $\mathbf m\in \mathbb N^{j+1}$ but  $\mathbf m\not\in {C}_{\underline{w}}$. Then $w'=s_{i_2}\cdots s_{i_{j+1}}$
is a reduced decomposition of an element of length $j$. So if $\mathbf m'=(m_2,\ldots,m_{j+1})\not \in {C}_{\underline{w'}}$,
then by induction we know that $\mathbf f^{(\mathbf m')}$ is a linear combination of a finite number of monomials
$\mathbf f^{\mathbf k'}$ such that $\mathbf k'\in {C}_{\underline{w'}}$
and $\mathbf k'<\mathbf m'$ and hence
$$
\mathbf f^{(\mathbf m)}=f_{i_1}^{(m_1)}\mathbf f^{(\mathbf m')}=
\sum_{\substack{\mathbf k'\in\mathbb N^{j}\\ \mathbf{k'<m'}}} a_{\mathbf{k',m'}}f_{i_1}^{(m_1)} \mathbf f^{(\mathbf k')}
=
\sum_{\substack{\mathbf k\in\mathbb N^{j+1}\\ \mathbf{k<m}}} a_{\mathbf{k,m}}f^{(\mathbf k)},
$$
where the $\mathbf k$'s are obtained from the $\mathbf k'$'s by adding $m_1$ at the beginning.
So assume now $\mathbf m\in \mathbb N^{j+1}$,  $\mathbf m\not\in {C}_{\underline{w}}$ but
$\mathbf m'=(m_2,\ldots,m_{j+1}) \in {C}_{\underline{w'}}$. Now we can apply part {\it ii)} in the general form mentioned above:
$$
\mathbf f^{(\mathbf m')}=\sum_{\substack{\mathbf k' \in {\cal C}_{\underline{w'}}\\ \mathbf k'\le \mathbf m'}} a_{\mathbf{k',m'}}G(\mathbf k'),
$$
where $a_{\mathbf{m',m'}}=1$. Since the transition matrix in the part {\it ii)} (also in its more general form, see \cite{Li1}) is upper triangular, it follows
that $G(\mathbf k')$ is a linear combination of $\mathbf f^{(\mathbf h')}$ such that $\mathbf h'\le  \mathbf k'\le \mathbf m'$.
For $\mathbf k'< \mathbf m'$ it follows that
$$
f_{i_1}^{(m_1)}G(\mathbf k')=\sum_{\substack{\mathbf h' \in {\cal C}_{\underline{w'}}\\ \mathbf h'\le \mathbf k'}}
a_{\mathbf{k',m'}} f_{i_1}^{(m_1)}\mathbf f^{(\mathbf h')}
=\sum_{\substack{\mathbf h\in\mathbb N^{j+1}\\ \mathbf{h<m}}} a_{\mathbf{h,m}}f^{(\mathbf h)},
$$
where the $\mathbf h$ are obtained from the $\mathbf h'$ by adding a $m_1$ as a first component.
In particular, we get $f_{i_1}^{(m_1)}G(\mathbf k')$ as a linear combination of monomials strictly smaller than $\mathbf f^{(\mathbf m)}$.

It remains to consider the term $f_{i_1}^{(m_1)}G(\mathbf m')$. Suppose first $(0,m_2,\ldots,m_q)\in {C}_{\underline{w}}$.
Since $\mathbf m\not\in {C}_{\underline{w}}$, Proposition 6.4.3 in \cite{Ka}
implies that $f_{i_1}^{(m_1)}G(\mathbf m')$ is a linear combination of global basis elements $G(\mathbf b)$ such that $b_1>m_1$
and hence $\mathbf b < \mathbf m$. Again applying part {\it ii)}, we get $f_{i_1}^{(m_1)}G(\mathbf m')$ as a linear combination
of monomials $\mathbf f^{\mathbf h}$ such that $\mathbf{h<m}$.

Suppose now $(0,m_2,\ldots,m_q)\not \in {C}_{\underline{w}}$, but recall that we assume $(m_2,\ldots,m_q) \in { C}_{\underline{w'}}$.
Let $\mathbf a=(a_1,\ldots,a_q)$ be the corresponding
string parametrization of the global basis element for the fixed decomposition of $w$. Recall that the global basis is independent of the parametrization,
so $G(\mathbf m')=G(\mathbf a)$. Proposition 6.4.3 in \cite{Ka}
implies that $f_{i_1}^{(m_1)}G(\mathbf m')=f_{i_1}^{(m_1)}G(\mathbf a)=$  a linear combination of global basis elements
$G(\mathbf b)$ such that $b_1\ge a_1+m_1$. Since $a_1>0$ by assumption, it follows that
$\mathbf b < \mathbf m$. Again applying part {\it ii)}, we get $f_{i_1}^{(m_1)}G(\mathbf m')$ as a linear combination
of monomials $\mathbf f^{\mathbf h}$ such that $\mathbf{h<m}$.

Summarizing: if $\mathbf m\not\in  {C}_{\underline{w}}$, then $\mathbf f^{(\mathbf m)}$ is a linear combination
of monomials $\mathbf f^{(\mathbf m')}$ such that $\mathbf m' < \mathbf m$, and all are of the same $T$-weight.
Since there is only a finite number of ordered monomials of a given weight, it follows that, after repeating the procedure if necessary,
one can rewrite $\mathbf f^{(\mathbf m)}$ as a linear combination
of monomials $\mathbf f^{(\mathbf m')}$ such that $\mathbf m' < \mathbf m$ and $\mathbf m'\in{C}_{\underline{w}}$.
\qed
\section{Example: sequences with an ASM and Lusztig's parametrization}\label{Lusztig}
Fix a reduced decomposition $\underline{w}_0=s_{i_1}\cdots s_{i_N}$ of the longest word $w_0$ in the Weyl group $W$ of $\mathfrak g=\hbox{Lie G\,}$
and let $S=(\beta_1,\ldots,\beta_N)$ be an enumeration of the positive roots
associated to the decomposition, i.e., $\beta_k=s_{i_1}\cdots s_{i_{k-1}}(\alpha_k)$ for $k=1,\ldots,N$. 
By Example~\ref{monomialseins} we know that
$S$ is a birational sequence. Let $\Psi:\mathbb N^N\rightarrow \mathbb N$ be the height weighted function
as in Example~\ref{rootorder}. We fix on $\mathbb N^N$ the associated  $\Psi$-weighted opposite right
 lexicographic order ``$>$''.
For an element $\mathbf m\in \mathbb N^N$ denote by $B(\mathbf m)$ the corresponding element
of Lusztig's canonical basis of $\mathcal{U}(\mathfrak n^-)$ (using Lusztig's parametrization with respect to the fixed decomposition $\underline{w}_0$),
specialized at $q=1$.

The decomposition determines also an enumeration of the positive roots (see the choice of $S$ above) and hence a PBW-basis.
The connection between this PBW-basis and the canonical basis was described by Lusztig in the $\tt A$, $\tt D$ and $\tt E$-case
\cite{Lu1,Lu2}, and by Caldero  \cite{C2} in the semisimple case. In terms of the $(\mathbb N^N,>,S)$-filtration
this can be reformulated as follows:
\begin{thm}\it
The $(\mathbb N^N,>,S)$-filtration of $\mathcal{U}(\mathfrak n^-)$ is compatible with Lusztig's canonical basis.
In particular, given a dominant weight $\la$, $\mathbf m\in   \mathbb N^N$ is an essential
multi-exponent for $V(\la)$ if and only if $B(\mathbf m).v_\la\not=0$.
\end{thm}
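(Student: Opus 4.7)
The plan is to reduce the theorem to the classical Lusztig--Caldero comparison between the PBW basis and the canonical basis, and then to match the triangularity in that comparison to the specific monomial order fixed in this section.

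First I would recall the precise form of the Lusztig--Caldero result (\cite{Lu1, Lu2} in type $\tt ADE$, \cite{C2} in the general semisimple case). For the PBW basis attached to the enumeration $S=(\beta_1,\ldots,\beta_N)$ coming from $\underline{w}_0$, the transition matrix from $\{\mathbf f^{(\mathbf m)}\}$ to $\{B(\mathbf m)\}$ is unitriangular: at $q=1$,
\[
\mathbf f^{(\mathbf m)}=B(\mathbf m)+\sum_{\mathbf m'\prec \mathbf m} c_{\mathbf m,\mathbf m'}\,B(\mathbf m'),
\]
where $\prec$ is the partial order introduced by Lusztig on $\mathbb N^N$, and all $\mathbf m'$ occurring have the same $T$-weight as $\mathbf m$, i.e.\ $\sum_k m'_k \beta_k = \sum_k m_k \beta_k$.

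Next I would verify that this partial order $\prec$ is refined by our chosen $\Psi$-weighted opposite right lexicographic order ``$>$''. Since every $\mathbf m'$ appearing above has the same weight as $\mathbf m$, it has the same height $\Psi(\mathbf m')=\Psi(\mathbf m)$, and the comparison reduces to pure opposite right lex. The partial order $\prec$ in Lusztig's work is generated by elementary ``Lusztig moves'' which, for consecutive indices $k<\ell$ with $\beta_k+\beta_\ell$ or $\beta_k+2\beta_\ell$ etc.\ positive roots, decrease $m_\ell$ while increasing earlier indices; combined with the convexity of the ordering of roots coming from $\underline{w}_0$, each such move strictly decreases the rightmost differing entry. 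Hence $\mathbf m'\prec \mathbf m$ implies $\mathbf m'<\mathbf m$ in opposite right lex. This is the subtle point.

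Once this compatibility is established, the triangular system can be inverted to write $B(\mathbf m)=\mathbf f^{(\mathbf m)}+\sum_{\mathbf m''<\mathbf m} d_{\mathbf m,\mathbf m''}\mathbf f^{(\mathbf m'')}$, so
\[
\mathcal{U}(\mathfrak n^-)_{\le \mathbf m}=\langle \mathbf f^{(\mathbf k)}\mid \mathbf k\le \mathbf m\rangle=\langle B(\mathbf k)\mid \mathbf k\le \mathbf m\rangle,
\]
which is exactly the assertion that the $(\mathbb N^N,>,S)$-filtration is compatible with Lusztig's canonical basis. Passing to the induced filtration on $V(\la)$: for each $\mathbf m$, the image of $B(\mathbf m)$ in $V(\la)_{\mathbf m}=V(\la)_{\le \mathbf m}/V(\la)_{<\mathbf m}$ is, by the compatibility of the canonical basis with highest weight modules (Lusztig's theorem that $\{B(\mathbf m)v_\la\mid B(\mathbf m)v_\la\ne 0\}$ is a basis of $V(\la)$), either zero or a basis vector of the one-dimensional subquotient. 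Therefore $\mathbf m\in es(\la)$ if and only if $B(\mathbf m)v_\la\ne 0$.

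The main obstacle is the combinatorial verification in the second paragraph: showing that the partial order governing the Lusztig--Caldero triangularity is actually refined by the $\Psi$-weighted opposite right lex order that we fixed. The paper's choice of order is tailored for exactly this purpose, but checking it requires unwinding Lusztig's recursive definition of $\prec$ (via braid-type moves coming from rank-two subsystems) and tracking how each elementary move acts on the coordinates; the remaining ingredients are formal consequences of the triangularity, paralleling the derivation of Theorem~\ref{stringCone} from the crystal-basis triangularity of \cite{Li1}.
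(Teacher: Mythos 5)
Your strategy is the same as the paper's: express the PBW monomials in terms of Lusztig's canonical basis, observe that the transition matrix is unitriangular with respect to the fixed monomial order, and then conclude compatibility of the filtration together with the essential-multi-exponent criterion via the good-basis property of $B(\mathbf m)v_\la$. Where you differ is in how you obtain the unitriangularity in the right form.

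The paper avoids the combinatorics of Lusztig moves entirely: it simply quotes Caldero's Corollary 2.1 of \cite{C2}, which asserts directly that $B(\mathbf m) = f^{(\mathbf m)} + \sum_{\mathbf m <_{rlex} \mathbf n} d^{\mathbf m}_{\mathbf n}(q) f^{(\mathbf n)}$. Since all terms share the same $T$-weight (hence the same $\Psi$-value), the $\Psi$-weighted \emph{opposite} right lex order reverses $<_{rlex}$, and the needed inversion with lower-order error terms drops out immediately; nothing about the abstract Lusztig partial order needs to be unwound. Your version instead states the triangularity against Lusztig's partial order $\prec$ and then tries to verify that $\prec$ is refined by the chosen monomial order, which you (correctly) flag as the hard part. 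That verification as sketched is not right: you claim that each elementary move "strictly decreases the rightmost differing entry", which would give $\mathbf m' <_{rlex} \mathbf m$ and hence $\mathbf m' > \mathbf m$ in the opposite right lex order --- the wrong direction. In fact, the moves \emph{increase} the rightmost differing coordinate (e.g., for $\mathfrak{sl}_3$, $\underline{w}_0 = s_1 s_2 s_1$, the weight $-(\alpha_1+\alpha_2)$ relation connects $(0,1,0)$ and $(1,0,1)$ with the last coordinate going up), which is exactly what gives $\mathbf m' < \mathbf m$ in opposite right lex; so your intermediate claim contradicts your stated conclusion. You would do better simply to cite Caldero's Corollary 2.1 in its right-lex form, as the paper does, which removes the gap entirely and makes the rest of your argument go through exactly as you have laid it out.
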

\begin{coro}\it
$(S,>)$ is a sequence with an ASM.
\end{coro}
\vskip 1pt\noindent
{\it Proof of the theorem}.
By \cite{C2}, Corollary 2.1, one knows:
\begin{equation}\label{Eq2}
B(\mathbf m) = f^{(\mathbf m)} +\sum_{\mathbf m<_{rlex} \mathbf n} d_{\mathbf n}^{\mathbf m}(q)   f^{(\mathbf n)},
\end{equation}
where $d_{\mathbf n}^{\mathbf m}(q)\in q\mathbb Z[q]$.
So we get:
$$
f^{(\mathbf m)} = B (\mathbf m) + \sum_{\mathbf m<_{rlex} \mathbf n} c_{\mathbf n}^{\mathbf m}(q)  f^{(\mathbf n)}.
$$
One can replace the $f^{(\mathbf n)}$ by similar expressions. Since all monomials have the same $T$-weight, there is only a finite number
of them which are strictly larger than $\mathbf m$ with respect to ``$<_{rlex} $''. So by repeating the procedure if necessary, we get
$$
f^{(\mathbf m)} = B (\mathbf m) + \sum_{\mathbf m<_{rlex} \mathbf n} b_{\mathbf n}^{\mathbf m}(q)  B (\mathbf n).
$$
Let $\Psi$ be the height function as in Example~\ref{rootorder} and use as order ``$>$" the $\Psi$-weighted opposite right lexicographic order.
Specialized at $q=1$, one can reformulate the above as follows:
$$
f^{(\mathbf m)} = B (\mathbf m) + \sum_{\bf m> n} b_{\mathbf n}^{\mathbf m}(1)  B(\mathbf n).
$$
So the filtration of $\mathcal{U}(\mathfrak n^-)$ by the PBW basis with respect to ``$>$'' is compatible with Lusztig's canonical basis.
In particular, given a dominant weight $\lambda$ and a highest weight vector $v_\la$, then the properties of the good basis imply:
$$
f^{(\mathbf m)} v_\la \hbox{\ is essential for\ }V(\lambda)\Longleftrightarrow B(\mathbf m)v_\lambda\not=0.
$$
\qed
\vskip 1pt\noindent
{\it Proof of the corollary}.
It remains to show that $\Gamma$ is finitely generated.
If $G$ is of type $\tt A$, $\tt D$ or $\tt E$, then Lusztig describes
\cite{Lu1,Lu2} the elements $B(\mathbf m)$ of the canonical basis such that $B(\mathbf m).v_\la\not=0$.
From this description it follows immediately that $\Gamma$ is the set of integral points in a finite union
of closed rational polyhedral cones, and hence is finitely generated and saturated.

For the general case we refer to \cite{BZ}, where
piecewise linear maps $\rho(\underline{w}_0,\underline{w}'_0):\mathbb R_{\ge 0}^N\rightarrow \mathbb R_{\ge 0}^N$
are described
which transform Lusztig's parametrization with respect to the reduced decomposition $\underline{w}_0$
into the string parameter with respect to a reduced decomposition $\underline{w}'_0$. Recall that the string
cone is a closed rational polyhedral cone. Again it follows
that $\Gamma$ is the set of integral points in a finite union
of closed rational polyhedral cones, and hence is finitely generated and saturated.
\qed

As in the string cone case, it seems to be difficult to give an explicit description for the cones and polytopes, except
in some special cases. In the case of the string cone, examples can be found in \cite{Li1} and in \cite{AB}.
We explain now a useful tool to calculate the essential polytope in terms of the
Lusztig parametrization for $G=SL_{n+1}(\mathbb C)$. We fix the following reduced decomposition:
\begin{equation}\label{AnSpecial}
\underline{w}_0=(s_ns_{n-1}\ldots s_1)(s_{n}s_{n-1}\ldots s_2)\ldots (s_ns_{n-1})s_n,
\end{equation}
$\Psi$ is the height function and ``$>$'' is the associated  $\Psi$-weighted opposite right lexicographic order.

Let $\operatorname{deg}$ be the homogeneous degree function on $\mathbb{N}^N$.
Let $Q$ be the equi-oriented quiver of type $A_n$ where the vertex $1$ is a source. We fix a reduced decomposition
which is compatible with the orientation of the quiver. The indecomposable representation of $Q$ corresponding to the positive root $\alpha_{i,j}$ will be denoted by $M_{i,j}$.

\begin{prop}\it Let $G=SL_{n+1}(\mathbb C)$ and let $S=(\beta_1,\ldots,\beta_N)$ be according to the decomposition in \eqref{AnSpecial}.
If $\bold{m}$ and $\bold{n}$ are two multi-exponents satisfying $wt(\bold{m})=wt(\bold{n})$ such that $f^{(\bold{m})}v_\lambda$ is essential for $V(\lambda)$ but $f^{(\bold{n})}v_\lambda$ is not, then $\operatorname{deg}(\bold{n})\leq\operatorname{deg}(\bold{m})$.
\end{prop}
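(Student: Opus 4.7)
The strategy is to reduce the statement to a vanishing question for Lusztig's canonical basis, interpret it geometrically via the representations of the equi-oriented quiver $Q$, and then invoke semi-continuity on the quiver representation variety. By the theorem established immediately above, $f^{(\mathbf{m})}v_\lambda$ is essential for $V(\lambda)$ if and only if $B(\mathbf{m})v_\lambda\neq 0$, where $B(\mathbf{m})$ is the specialization at $q=1$ of Lusztig's canonical basis element with Lusztig parameter $\mathbf{m}$ relative to the decomposition \eqref{AnSpecial}. The claim therefore becomes: whenever $B(\mathbf{m})v_\lambda\neq 0$, $B(\mathbf{n})v_\lambda=0$ and $wt(\mathbf{m})=wt(\mathbf{n})$, one has $\mathrm{deg}(\mathbf{n})\leq \mathrm{deg}(\mathbf{m})$.

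The second step is to invoke the quiver-representation dictionary. The decomposition \eqref{AnSpecial} is adapted to the equi-oriented quiver $Q$ with source at vertex $1$ (for $A_2$ one gets $s_2s_1s_2$, because $2$ is the initial sink of $1\to 2$, then $1$ after the reflection at $2$, and so on). By the Lusztig-Ringel realization of the canonical basis, multi-exponents $\mathbf{m}=(m_{i,j})$ are in bijection with isomorphism classes of $Q$-representations $M_{\mathbf{m}}=\bigoplus_{i\leq j} M_{i,j}^{m_{i,j}}$; under this identification $wt(\mathbf{m})$ corresponds to the dimension vector of $M_{\mathbf{m}}$ and $\mathrm{deg}(\mathbf{m})$ to the total number of its indecomposable summands. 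Multi-exponents sharing a given weight thus correspond to $GL_d$-orbits on a common representation variety $R_d$, on which strict degeneration (i.e.\ passage to a smaller orbit in the orbit-closure order) strictly increases the number of indecomposable summands; in other words, $\mathrm{deg}$ is monotone on the degeneration order.

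The heart of the proof is to show that the essentiality locus
\[
E_\lambda:=\{M\in R_d \mid B(\mathbf{m}_M)v_\lambda\neq 0\}
\]
is closed under degeneration in $R_d$. The plan is to identify the non-vanishing $B(\mathbf{m})v_\lambda\neq 0$ with a numerical homological condition of the form $\dim\mathrm{Hom}_Q(I_\lambda,M_{\mathbf{m}})\geq c_\lambda$ (or dually $\dim\mathrm{Hom}_Q(M_{\mathbf{m}},P_\lambda)\geq c_\lambda$) for a distinguished $Q$-module $I_\lambda$ (resp.\ $P_\lambda$) built from $\lambda$; such a criterion is of Reineke/Caldero type, rooted in Lusztig's geometric realization of the canonical basis as perverse sheaves on $R_d$. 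Upper semi-continuity of $\dim\mathrm{Hom}$ on representation varieties then yields closedness of $E_\lambda$, hence upward-closedness in the degeneration poset. Combined with monotonicity of $\mathrm{deg}$, this gives $\mathrm{deg}(\mathbf{n})\leq \mathrm{deg}(\mathbf{m})$: non-essential orbits live in the open (generic) part of $R_d$ where $\mathrm{deg}$ is smaller, essential orbits sit above them in the degeneration order with larger $\mathrm{deg}$. A transparent sanity check is $G=SL_3$, $\lambda=\omega_1$: at weight $\alpha_1+\alpha_2$ the essential $\mathbf{m}=(1,0,1)$ corresponds to the closed-orbit module $M_{1,1}\oplus M_{2,2}$ ($\mathrm{deg}=2$) while the non-essential $\mathbf{n}=(0,1,0)$ corresponds to the open-orbit indecomposable $M_{1,2}$ ($\mathrm{deg}=1$).

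The principal obstacle is pinning down the precise Reineke-type identification for $B(\mathbf{m})v_\lambda$ and verifying the direction of semi-continuity that makes $E_\lambda$ closed rather than open. A secondary subtlety is to rule out the scenario of an essential orbit and a non-essential orbit of the same weight being incomparable in the degeneration poset yet with the non-essential one having strictly larger $\mathrm{deg}$; this should be excluded by analyzing the minimal elements of $E_\lambda$ and invoking the special structure of the orbit poset of $Q$-representations of a fixed dimension vector in Dynkin type. Once these points are in place, the argument becomes a clean combination of quiver-geometric semi-continuity with the Lusztig-Ringel dictionary.
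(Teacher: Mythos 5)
Your reduction to the criterion $B(\mathbf m)v_\lambda\neq 0$ and the Hall-algebra dictionary you set up (multi-exponents $\leftrightarrow$ isoclasses of $Q$-representations, $wt\leftrightarrow$ dimension vector, $\deg\leftrightarrow$ number of indecomposable summands, monotonicity of $\deg$ along degenerations) agree with the paper's preparatory lemma. But from there your argument takes a different and, as you yourself flag, incomplete route. The paper does not try to show that the essentiality locus $E_\lambda$ is closed under degeneration. Its engine is a transition-matrix lemma: by Caldero \cite{C2}, $B(\mathbf m)$ expands unitriangularly over PBW monomials $f^{(\mathbf n)}$, and by Section 7.11 of \cite{Lu1} only $\mathbf n$ with $[M_{\mathbf n}]$ a degeneration of $[M_{\mathbf m}]$ contribute; combined with the identity $\deg(E_{[M]})=\sum_k\dim\operatorname{Hom}(M_{k,k},M)$, which makes $\deg$ monotone along the degeneration order, this forces $\deg(\mathbf n)\geq\deg(\mathbf m)$ on every term of the expansion. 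The proposition is then obtained by rerunning the compatibility argument (essential $\Leftrightarrow B(\mathbf m)v_\lambda\neq 0$) with this degree bound carried along.

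What you call the ``principal obstacle'' is a genuine gap: a clean homological reformulation $B(\mathbf m)v_\lambda\neq 0\Leftrightarrow\dim\operatorname{Hom}_Q(I_\lambda,M_{\mathbf m})\geq c_\lambda$ is neither supplied nor cited, and it is not clear such an $I_\lambda$ exists. Even granting it, the ``secondary subtlety'' stays open: closedness of $E_\lambda$ together with degree-monotonicity only controls pairs $(\mathbf m,\mathbf n)$ comparable in the degeneration poset, whereas the statement is about all pairs of a common weight. So while your opening moves and your reading of the quiver geometry are sound, the central mechanism the paper actually uses --- the Lusztig--Caldero unitriangularity of the canonical-to-PBW change of basis with respect to the degeneration order, fed through the filtration-compatibility theorem --- does not appear in your proof, which is instead left unfinished at its two decisive steps.
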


The remaining part of this section is devoted to the proof of this proposition.

\begin{lem}\it
For any $\bold{n}$ in equation \eqref{Eq2} such that $d_{\bold{n}}^{\bold{m}}(q)\neq 0$, $\operatorname{deg}(\bold{n})\geq\operatorname{deg}(\bold{m})$.
\end{lem}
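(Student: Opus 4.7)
The plan is to interpret both sides of \eqref{Eq2} via the representation theory of the equi-oriented $\tt A_n$-quiver $Q$ and reduce the desired inequality to a monotonicity property of degenerations of quiver representations.

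Since the reduced decomposition in \eqref{AnSpecial} is $+$-adapted to the orientation $Q=(1\to 2\to\cdots\to n)$, Ringel's theorem identifies the PBW basis of $U_q^-(\mathfrak{sl}_{n+1})$ with iso-classes of representations of $Q$: the positive root $\alpha_{i,j}$ corresponds to the interval module $M_{i,j}$, and $f^{(\mathbf m)}$ corresponds, up to a power of $q$, to $[M_{\mathbf m}]$ in the (twisted) generic Hall algebra, where
\[
M_{\mathbf m}\;=\;\bigoplus_{1\le i\le j\le n} M_{i,j}^{\,m_{i,j}}.
\]
Under this identification, $\operatorname{deg}(\mathbf m)=\sum_{i\le j}m_{i,j}$ equals the total number of indecomposable summands of $M_{\mathbf m}$, counted with multiplicity.

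Lusztig's geometric construction of the canonical basis for simply-laced types (equivalently, Reineke's Hall-algebra reformulation in type $\tt A$) identifies $B(\mathbf m)$, up to a power of $q$, with the intersection cohomology class of the orbit closure $\overline{\mathcal O_{M_{\mathbf m}}}$ inside the representation variety $\operatorname{Rep}_Q(\mathbf d)$, where $\mathbf d$ is the common dimension vector forced by $wt(\mathbf m)=wt(\mathbf n)$. Expanding this class in the PBW basis recovers \eqref{Eq2}, and the key output is that a nonzero coefficient $d_{\mathbf n}^{\mathbf m}(q)$ forces the orbit $\mathcal O_{M_{\mathbf n}}$ to lie in $\overline{\mathcal O_{M_{\mathbf m}}}$, i.e.\ $M_{\mathbf n}$ is a degeneration of $M_{\mathbf m}$.

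To conclude, invoke the classical monotonicity result due to Bongartz and Zwara: for representations of a Dynkin quiver of a fixed dimension vector, a degeneration can only increase the total number of indecomposable summands. Combined with the previous step, this yields $\operatorname{deg}(\mathbf n)\ge\operatorname{deg}(\mathbf m)$, as required.

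The principal technical step is identifying $B(\mathbf m)$ with the IC-class of $\overline{\mathcal O_{M_{\mathbf m}}}$ and consequently tying the nonvanishing of $d_{\mathbf n}^{\mathbf m}(q)$ to the orbit closure poset; this is the substantive input drawn from Lusztig/Reineke. Once it is granted, the inequality reduces to the well-known summand monotonicity under quiver degenerations, which is elementary.
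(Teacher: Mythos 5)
Your proof is correct and takes essentially the same route as the paper: identify the PBW and canonical bases with quiver representations of the equi-oriented $\tt A_n$-quiver, use Lusztig's description of the canonical basis (the paper cites [Lu1] Section~7.11 in Hall-algebra language rather than the IC-sheaf picture, but it is the same input) to conclude that a nonzero coefficient $d_{\mathbf n}^{\mathbf m}(q)$ forces $M_{\mathbf n}$ to be a degeneration of $M_{\mathbf m}$, and then invoke monotonicity of the number of indecomposable summands under degeneration. The only real difference is that where you cite a named Bongartz--Zwara monotonicity result, the paper proves the needed inequality directly by observing $\operatorname{deg}(E_{[M]})=\sum_k\dim\operatorname{Hom}(M_{k,k},M)$, which is manifestly non-decreasing in the Hom-order.
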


\begin{proof}
The proof of this lemma uses the Hall algebra of $Q$.
We fix the following degeneration order on the isoclasses of representations of $Q$ having the same fixed dimension vector:
$[M]\leq [N]$ if and only if for any indecomposable representation $V$ one has
$\dim\operatorname{Hom}(V,M)\leq \dim\operatorname{Hom}(V,N).$

We fix the following decompositions of $M$ and $N$ into indecomposables:
$$M=\bigoplus_{1\leq i\leq j\leq n}M_{i,j}^{\oplus m_{i,j}},\ \ N=\bigoplus_{1\leq i\leq j\leq n}M_{i,j}^{\oplus n_{i,j}}.$$
Let $E_{[M]}$ be the PBW root vector associated to the isoclass $[M]$. For a representation $M$ as above, we define the PBW degree of $E_{[M]}$ to be $$\operatorname{deg}(E_{[M]})=\sum_{1\leq i\leq j\leq n}m_{i,j}.$$
\par
We claim that $[M]\leq [N]$ implies $\operatorname{deg}(E_{[M]})\leq \operatorname{deg}(E_{[N]})$. Indeed, taking $r=s=k$ in the following formula
$$\dim\operatorname{Hom}(M_{r,s},M_{i,j})=\left\{\begin{matrix} 1,& 1\leq i\leq r\leq j\leq s\leq n\\
0, & \text{otherwise}\end{matrix}\right.$$
gives $\dim\operatorname{Hom}(M_{k,k},M)=\sum_{1\leq i\leq k}m_{i,k}$, which proves the claim.

Let
$\{\bold{b}_{[M]}\ |\ [M]\ \text{is an isoclass of representation of }Q\}$
be Lusztig's canonical basis. By Section 7.11 of [Lu1],
\begin{equation}\label{Eq}
\bold{b}_{[M]}=E_{[M]}+\sum_{[M']>[M]}\zeta_{[M']}^{[M]}E_{[M']},
\end{equation}
where $<$ is the degeneration order. As a corollary of the above claim, in (\ref{Eq}), all $E_{[M']}$ appearing in the sum satisfy: $\operatorname{deg}(E_{[M']})\geq \operatorname{deg}(E_{[M]})$.
\par
In the isomorphism between the Hall algebra of $Q$ and the negative part of the corresponding quantum group, up to some scalar, the PBW root vector $E_{[M]}$ corresponds to $f^{(\bold{m})}$ where
$$\bold{m}=(m_{n,n},m_{n-1,n},\ldots,m_{1,n},m_{n-1,n-1},\ldots,m_{1,n-1},\ldots,m_{2,2},m_{1,2},m_{1,1}).$$
This proves the lemma.
\end{proof}

\begin{proof}[Proof of the proposition]
By the above lemma, in formula (\ref{Eq2}), for any multi-exponent $\bold{n}$ in the sum, $\operatorname{deg}(\bold{n})\geq\operatorname{deg}(\bold{m})$. So in the proof of Theorem 2 we can take this homogeneous degree into account, proving that essential monomials have larger homogeneous degree.
\end{proof}

\section{Example: a sequence with an ASM in the homogenous PBW-type case}\label{homordercase}
In this section let $G=SL_{n+1}$ or $Sp_{2n}$. Let $\alpha_i=\epsilon_i-\epsilon_{i+1}$,
$i=1,\ldots,n$ be the simple roots for $SL_{n+1}$, in the symplectic case the simple roots are
$\alpha_i=\epsilon_i-\epsilon_{i+1}$, $i=1,\ldots,n-1$, and $\alpha_n=2\epsilon_n$.
In the $SL_{n+1}$-case, the positive roots are of the form $\al_{i,j}=\al_i+\dots +\al_j$ for $1\le i\le j\le n$.
In the symplectic case the positive roots are of the form
\begin{gather*}
\al_{i,j}=\al_i+\al_{i+1}+\dots +\al_j,\ 1\le i\le j\le n,\\
\alpha_{i, \ol{j}} = \alpha_i + \alpha_{i+1} + \ldots +
\alpha_n + \alpha_{n-1} + \ldots + \alpha_j, \ 1\le i\le j\le n
\end{gather*}
(note that $\al_{i,n}=\al_{i,\ol n}$).

 In the $SL_{n+1}$-case, a sequence ${\bf b}=(\delta_1,\dots,\delta_r)$ of
positive roots is called a {\it Dyck path} if the first and the last roots are
simple roots ($\delta_1=\al_{i,i}$, $\delta_r=\al_{j,j}$, $i\le j$), and if $\delta_m=\al_{p,q}$, then $\delta_{m+1}=\al_{p+1,q}$ or
$\delta_{m+1}=\al_{p,q+1}$.

A {\it symplectic Dyck path} is a sequence ${\bf b}=(\delta_1,\dots,\delta_r)$ of positive roots (for the symplectic group) such that:
the first root is a simple root, $\beta_1=\al_{i,i}$; the last root is either a simple root $\beta_r= \al_j$ or
$\beta_r = \al_{j,\ol{j}}$ ($i \le j \leq n$); if $\beta_m=\al_{r,q}$ with $r, q \in A$ then $\beta_{m+1}$ is
either $\al_{r,q+1}$ or $\al_{r+1,q}$, where $x+1$ denotes the smallest element in $A$ which is bigger than $x$.
Here we use the usual order on the alphabet $A = \{1, \ldots, n, \ol{n-1}, \ldots, \ol{1}\}$: $ 1 <2 < \ldots < n-1 < n < \ol{n-1} < \ldots < \ol{1}.$

Let ``$\succ$'' be the standard partial order on the set of positive roots for $G=SL_{n+1}$ respectively $Sp_{2n}$.
We fix an ordering on the positive roots $\beta_1,\dots,\beta_N$ on the positive roots and we assume that
\[
\beta_i\succ \beta_j \text{ implies } i<j.
\]
An ordering with this property (the larger roots come first) will be called a {\it good ordering}.
Once we fix such a good ordering, this induces an ordering on the basis vectors $f_\beta$.
As monomial order ``$>$'' on the PBW basis we fix the induced homogeneous right lexicographic order.

The following description of the global essential monoid $\Gamma$ has been given in \cite{FFL1,FFL2,FFL3}.
Note that the description is independent of the choice of the ordering of the roots, as long as the ordering is a ``{\it good\/}'' ordering.

\begin{thm}\label{theoremthree}\it
The global essential monoid $\Gamma$ is the monoid of integral points in the closed rational polyhedral cone ${\mathcal C}$
defined by the inequalities:

If $G=SL_{n+1}(\mathbb C)$, then
$$\small
{\mathcal C}=\left\{ (\mathbf x,\mathbf m) \in \mathbb R_{\ge 0}^n\times  \mathbb R_{\ge 0}^N \mid
\begin{array}{c}
\hbox{$\forall i=1,\ldots,n$, $\forall$ Dyck paths ${\bf b}=(\delta_1,\dots,\delta_r)$ starting in $\alpha_i$,}\\
\hbox{ending in $\alpha_j: \sum_{\ell=1}^r m_{\delta_\ell}\le x_i+\ldots+x_j$}.\\
\end{array}
\right\}
$$
If $G=Sp_{2n}(\mathbb C)$, then
$$\small
{\mathcal C}=\left\{ (\mathbf x,\mathbf m) \in \mathbb R_{\ge 0}^n\times  \mathbb R_{\ge 0}^N \mid
\begin{array}{c}
\hbox{$\forall i=1,\ldots,n$, $\forall$ symplectic Dyck paths ${\bf b}=(\delta_1,\dots,\delta_r)$ }\\
\hbox{starting in $\alpha_i$, ending in $\alpha_j: \sum_{\ell=1}^r m_{\delta_\ell}\le x_i+\ldots+x_j$}\\
\hbox{$\forall i=1,\ldots,n$, $\forall$ symplectic Dyck paths ${\bf b}=(\delta_1,\dots,\delta_r)$ }\\
\hbox{starting in $\alpha_i$, ending in $\alpha_{j,\ol{j}}: \sum_{\ell=1}^r m_{\delta_\ell}\le x_i+\ldots+x_n$}.
\end{array}
\right\}
$$
In particular, $\Gamma$ is finitely generated and saturated.
\end{thm}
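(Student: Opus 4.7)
By Proposition \ref{semigleichsemi1} and the monoid structure from Corollary \ref{semigroup}, it suffices to identify, for each dominant weight $\la\in\Lambda^+$, the fiber $\mathrm{es}(\la)$ with the integer points of the slice of $\mathcal{C}$ over $\la$. By construction $\{\mathbf{f}^{(\mathbf{m})}v_\la \mid \mathbf{m} \in \mathrm{es}(\la)\}$ is a basis of $V(\la)^{gr}$, hence $|\mathrm{es}(\la)| = \dim V(\la)$. So the theorem will follow from the two bounds
\begin{equation*}
\mathrm{es}(\la) \subseteq P(\la) \cap \mathbb{Z}^N \quad\text{and}\quad |P(\la) \cap \mathbb{Z}^N| \leq \dim V(\la).
\end{equation*}
Once these are in place, one has $\Gamma = \mathcal{C}\cap (\Lambda \times \mathbb Z^N)$, and Gordan's lemma immediately gives finite generation, with saturation automatic.

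\textbf{Upper bound via straightening.} For the inclusion $\mathrm{es}(\la) \subseteq P(\la)\cap\mathbb{Z}^N$, fix a (symplectic) Dyck path $\mathbf{b} = (\delta_1, \ldots, \delta_r)$ starting at $\al_i$ and ending at $\al_j$ or at $\al_{j,\ol j}$. The plan is to produce, for every tuple $(s_\ell)\in \mathbb N^r$ with $\sum_\ell s_\ell$ exceeding the bound dictated by $\la$, a Serre-type straightening identity in $\mathcal{U}(\mathfrak{n}^-)$ that, applied to $v_\la$, expresses $\prod_\ell f_{\delta_\ell}^{s_\ell} v_\la$ as a linear combination of monomials $\mathbf f^{(\mathbf m')} v_\la$ strictly smaller in the homogeneous right lexicographic order than the lift $\mathbf m$ of $(s_\ell)$. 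Such relations, developed in \cite{FFL1,FFL2,FFL3}, are obtained by repeated application of $\mathfrak{sl}_2$-triples along the edges of $\mathbf b$ together with the fact that $e_\beta v_\la = 0$ for positive roots $\beta$. Their existence forces the class in $V^{gr}(\la)_\mathbf m$ to vanish for any $\mathbf m$ violating a Dyck-path inequality, whence $\mathbf m \notin \mathrm{es}(\la)$.

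\textbf{Lower bound via counting.} For the opposite bound $|P(\la)\cap \mathbb{Z}^N|\le \dim V(\la)$, the plan is to construct a weight-preserving bijection between $P(\la)\cap \mathbb Z^N$ and a known combinatorial parametrization of a basis of $V(\la)$: in type $\tt A$ one maps to Gelfand--Tsetlin patterns, in type $\tt C$ to the symplectic analogue due to De Concini/King. One writes down an explicit integer affine map from Dyck-path coordinates to pattern entries, verifies that the Dyck-path inequalities translate term-by-term into the inequalities cutting out the pattern polytope, and checks that the map is a bijection on lattice points. Weyl's character formula (or the known enumeration of patterns) then yields $|P(\la)\cap\mathbb{Z}^N| = \dim V(\la)$, and combined with the upper bound and $|\mathrm{es}(\la)|=\dim V(\la)$ we conclude $\mathrm{es}(\la) = P(\la)\cap\mathbb{Z}^N$.

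\textbf{Expected main obstacle.} The straightening step is the hard part: one must organize infinitely many potential relations into a finite family indexed exactly by Dyck paths and show nothing further is needed. The type $\tt A$ argument is already combinatorially delicate, but type $\tt C$ is the real challenge, because the two species of endpoints ($\al_j$ versus $\al_{j,\ol j}$) give two different right-hand sides ($x_i+\ldots+x_j$ versus $x_i+\ldots+x_n$), and one must simultaneously produce the correct straightening for each species and check that no mixed path yields a new inequality. This is precisely where the detailed case analysis of \cite{FFL2,FFL3} does the work, and my proposal is to invoke those results directly for the bookkeeping while recasting them in the present $(\mathbb N^N,>,S)$-filtration language.
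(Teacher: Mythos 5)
The paper offers no proof of Theorem~\ref{theoremthree}: immediately before its statement the text says only that the description of $\Gamma$ ``has been given in \cite{FFL1,FFL2,FFL3}'', so the theorem is taken as a citation. Your proposal, which ultimately also plans to ``invoke those results directly for the bookkeeping,'' is therefore consistent with the paper. Your reduction is sound: since $\{\mathbf f^{(\mathbf m)}v_\lambda\mid\mathbf m\in es(\lambda)\}$ is a basis of $V^{gr}(\lambda)$ one has $|es(\lambda)|=\dim V(\lambda)$, so the inclusion $es(\lambda)\subseteq P(\lambda)\cap\mathbb Z^N$ together with $|P(\lambda)\cap\mathbb Z^N|\le\dim V(\lambda)$ forces equality, and then $\Gamma=\mathcal C\cap(\Lambda\times\mathbb Z^N)$, whence Gordan's lemma gives finite generation with saturation automatic. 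This matches the structure of the proofs in the cited references.

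One step as you state it would not go through literally. For the lower bound you propose to ``write down an explicit integer affine map from Dyck-path coordinates to pattern entries'' carrying the FFLV inequalities term-by-term to the Gelfand--Tsetlin (resp.\ De Concini--King) inequalities. No such affine map exists in general: the FFLV polytope and the Gelfand--Tsetlin polytope have the same Ehrhart polynomial but are not affinely (nor unimodularly) equivalent; the lattice-point-preserving maps relating them are piecewise linear transfer maps, not affine ones. The lower bound in \cite{FFL2,FFL3} is in fact established by a different route (a direct combinatorial/inductive linear-independence argument for the spanning set in the degenerate module), not by matching inequalities to a pattern polytope. If you intend to reconstruct the proof rather than cite it, replace the ``affine map'' sentence with either the piecewise-linear transfer-map argument or, more faithfully, the linear-independence argument of \cite{FFL2,FFL3}. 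With that caveat, your upper-bound outline (Serre-type straightening indexed by Dyck paths, forcing classes to vanish in $V^{gr}(\lambda)_{\mathbf m}$ for $\mathbf m\notin P(\lambda)$) is exactly how the cited proofs proceed, and your warning that type $\tt C$ is the hard part — two species of Dyck-path endpoints producing two different right-hand sides — is well placed.
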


\begin{rem}
A global essential monoid for $G$ of type ${\tt B}_3$, ${\tt D_4}$ and type ${\tt G}_2$ in this PBW-type case can be found in \cite{BK},
\cite{Gor1} and \cite{Gor2} respectively. In other cases than $SL_{n+1}(\mathbb C)$, $Sp_{2n}(\mathbb C)$, ${\tt B}_3$, ${\tt D}_4$ and ${\tt G}_2$,
there is no global essential monoid known in the homogeneous case so far. Partial results, for the span of certain (fundamental) weights in
$\Gamma$, are due to \cite{BD} for minuscule weights and \cite{BK} in the type ${\tt B}_n$. For $G$ of type ${\tt A}_n$, the notion of \textit{triangular} Weyl group elements has been introduced in \cite{F}. For the homogeneous PBW-type case, global essential monoids for the Schubert varieties associated to these Weyl group elements are given in \cite{F}.
\end{rem}

\section{More examples}
In several cases, when the weight function $\Psi$ is properly chosen, the set of essential multi-exponents remains the same for arbitrary refinements.
\begin{exam}\label{quantumex}
Let $G=SL_{n+1}$, we label the positive roots as in the section before: for $1\leq i\leq j\leq n$,
$\alpha_{i,j} = \alpha_i + \ldots + \alpha_j$. Set $N=\frac{1}{2}n(n+1)$, and corresponding to the root
$\alpha_{i,j}$ let $e_{i,j}\in\mathbb N^N$ be the canonical basis vector
$$
e_{i,j}=(\underbrace{0,\ldots,0}_n,\ldots,\underbrace{0,\ldots,0}_{n-i+2},\underbrace{0,\ldots,0}_{j-i},1,0,\ldots,0).
$$
As a matter of fact, any other bijection between the set of positive roots and the canonical basis of $\mathbb N^N$
will give the same result, we just fix one for the convenience of the reader.
We consider the following integral weight function
\[
\Psi(e_{i,j}) = (j - i + 1)(n -j + 1).
\]
Let ``$>$'' be one of the total orders refining the induced partial order ``$>_{\Psi}$'' (Definition~\ref{weightorder}). Again,
as for the choice of the bijection above, the result is the same for any choice. It has been shown in \cite{FFR15}:
\begin{thm}\it
The global essential monoid $\Gamma$ is the monoid of integral points in the closed rational polyhedral cone
${\mathcal C}$ defined in Theorem~\ref{theoremthree} in the $SL_{n+1}$-case.
\end{thm}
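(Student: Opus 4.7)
The plan is to prove the equality $\Gamma = \mathcal C \cap (\Lambda^+ \times \mathbb Z^N)$ by first establishing the inclusion $\Gamma \subseteq \mathcal C$ and then upgrading to equality via a dimension count. The dimension count is essentially automatic: since $\{\mathbf f^{(\mathbf m)} v_\lambda \mid \mathbf m \in es(\lambda)\}$ is always a basis of $V(\lambda)$, one has $|es(\lambda)| = \dim V(\lambda)$, and by Theorem~\ref{theoremthree} the number of integral points of $\mathcal C$ in the fiber $p^{-1}(\lambda)$ is also $\dim V(\lambda)$. Hence, once $\Gamma \subseteq \mathcal C$ is known, equality over each fiber follows by cardinality.

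For the main inclusion, the task is to show that every $\mathbf m$ violating some Dyck-path inequality of Theorem~\ref{theoremthree} is not essential in the new $\Psi$-weighted order, i.e., $\mathbf f^{(\mathbf m)} v_\lambda$ lies in $V(\lambda)_{<\mathbf m}$. The FFL straightening procedure of \cite{FFL1} already rewrites $\mathbf f^{(\mathbf m)} v_\lambda$ as a combination of $\mathbf f^{(\mathbf m')} v_\lambda$ with $\mathbf m' <_{\mathrm{hom,rlex}} \mathbf m$. I would verify that each elementary step of this rewriting is also $<$-decreasing in the new $\Psi$-weighted order. There are two types of elementary moves to inspect. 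First, the Pl\"ucker-type exchange $e_{i,k}+e_{j,l} \mapsto e_{i,l}+e_{j,k}$ with $i \le j$, $k \le l$ satisfies
\[
\Psi(e_{i,k}) + \Psi(e_{j,l}) - \Psi(e_{i,l}) - \Psi(e_{j,k}) = (j-i)(l-k) \ge 0,
\]
with equality precisely when $i = j$ or $k = l$, in which case the move is trivial on multi-exponents. Second, the bracket-induced move $e_{i,j}+e_{j+1,l} \mapsto e_{i,l}$ coming from $[f_{\alpha_{i,j}}, f_{\alpha_{j+1,l}}] = \pm f_{\alpha_{i,l}}$ satisfies
\[
\Psi(e_{i,j})+\Psi(e_{j+1,l}) - \Psi(e_{i,l}) = (j-i+1)(l-j) > 0
\]
whenever $i\le j < l$. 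Thus each elementary move is strictly $\Psi$-decreasing (or trivial), and any trivial step is automatically compatible with the secondary lex refinement.

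The main obstacle is that the FFL straightening is not a single elementary move but an iterated rewriting whose intermediate monomials must all be controlled. One has to verify that every path along which a violating multi-exponent is reduced to a Dyck-admissible basis vector stays strictly $<$-decreasing at each step in the new order (strictly $\Psi$-decreasing, or $\Psi$-preserving but lex-decreasing), and that no intermediate rewriting accidentally introduces a monomial which is larger in the new composite order. Combinatorially this amounts to propagating the two elementary inequalities above along arbitrary Dyck paths and is essentially the content of the argument in \cite{FFR15}; it can be handled by induction on the length of the violated Dyck path together with the "distance" of $\mathbf m$ from $\mathcal C$. Once this compatibility is secured, the dimension argument of the first paragraph closes the proof.
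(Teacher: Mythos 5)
The paper offers no proof of this statement: it simply says ``It has been shown in \cite{FFR15},'' and the following Remark clarifies that \cite{FFR15} proceeds by showing the $\Psi$-filtration is \emph{compatible with the dual canonical basis} --- the same Hall-algebra/quiver-degeneration mechanism used explicitly in the paper's Section~\ref{Lusztig}. Your approach is genuinely different: instead of the canonical basis, you invoke the FFL straightening of \cite{FFL1,FFL2} and try to track the new $\Psi$-weighted order along its rewriting steps, and you close the argument with a fiberwise cardinality count. The cardinality argument is a clean addition: because both $es(\lambda)$ and the integral fiber of $\mathcal{C}$ have cardinality $\dim V(\lambda)$ (the latter by Theorem~\ref{theoremthree}), the inclusion $\Gamma\subseteq\mathcal{C}$ does indeed suffice, and saturation comes for free.

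The substantive gap is the step you flag yourself. Your two elementary $\Psi$-inequalities are correct and are, at bottom, instances of a single fact: $\Psi(e_{i,j})=(j-i+1)(n-j+1)=\sum_{r,s}\dim\operatorname{Hom}(M_{r,s},M_{i,j})$, so $\Psi$ is strictly monotone along the degeneration order on representations of the equioriented $A_n$ quiver. Your Pl\"ucker exchange and bracket move are exactly the minimal degenerations, so $(j-i)(l-k)\ge 0$ and $(j-i+1)(l-j)>0$ are special cases of this monotonicity. However, the FFL straightening relations in $V(\lambda)$ (obtained via $\mathfrak{sl}_2$-reduction and differential operators applied to $f_\alpha^{\langle\lambda,\alpha^\vee\rangle+1}v_\lambda=0$) are not stated, and not proved in \cite{FFL1,FFL2}, to factor through these two moves; the monomials that appear after applying the operators are described by a redistribution combinatorics along Dyck paths that is not a priori a concatenation of elementary degenerations. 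You would need either to reprove the straightening lemma keeping track of the quiver-degeneration relation between $\mathbf m$ and the emerging $\mathbf m'$, or to replace the straightening input altogether by Caldero's triangularity $B(\mathbf m)=f^{(\mathbf m)}+\sum_{[M']>[M]}d_{\mathbf m'}\,f^{(\mathbf m')}$ and then use the $\Psi$-monotonicity along degenerations --- which is what \cite{FFR15} does and what the paper already carries out for the height-weighted order in Section~\ref{Lusztig}. As written, your proposal identifies the right inequalities and the right endgame, but leaves exactly the hard combinatorial core unjustified.
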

\begin{rem}
Actually, the weight function $\Psi$ defined above is just a special example of a larger class of
integral weight functions defined in \cite{FFR15}. This class of integral weight functions is called {\it strongly admissible} in \cite{FFR15}.
In the case of the equioriented quiver of type ${\tt A}$, all strongly admissible functions give rise to the same global
essential monoid described in  Theorem~\ref{theoremthree} in the $SL_{n+1}$-case.

\end{rem}
\begin{rem}
By considering the integral weight function $\Psi$ in the above example, it is shown in \cite{FFR15} that the negative part of the quantum
group of type $\tt A_n$ becomes an $\mathbb{N}$-filtered algebra, and this filtration is compatible with the dual canonical basis (see section 3.4 therein).
\end{rem}
\end{exam}
\begin{exam}
Let $\mathfrak{g}=\mathfrak{sp}_4$ be of type ${\tt C}_2$ and $w:\Phi^+\rightarrow\mathbb{N}_{>0}$ be the function defined by:
$$w(\alpha_{1,1})=1,\ \ w(\alpha_{1,2})=1,\ \ w(\alpha_{1,\overline{1}})=1, \ \ w(\alpha_2)=2.$$
For any sequence $S=(\beta_1,\beta_2,\beta_3,\beta_4)$ where $\Phi^+=\{\beta_1,\beta_2,\beta_3,\beta_4\}$,
we define an integral weight function $\Psi_w$ by: for $\bold{m}=(m_1,m_2,m_3,m_4)$,
$$\Psi_w(\bold{m})=\sum_{i=1}^4 m_i w(\beta_i).$$
It is shown in \cite{BFF15} that for an arbitrary sequence of positive roots $S=(\beta_1,\ldots,\beta_4)$ where $\Phi^+=\{\beta_1,\ldots,\beta_4\}$,
the global essential monoid $\Gamma$ associated to $(S, >)$ does not depend on the choice of $S$. Up to a permutation of
coordinates, it admits the following description: the essential multi-exponents are lattice points in the polyhedral cone in $\Lambda_\mathbb{R}\times \mathbb{R}^4\subset \mathbb{R}^6$ with coordinates $(\ell_1,\ell_2,x_1,x_2,x_3,x_4)$:
$$\ell_1,\ \ell_2,\ x_1,\ x_2,\ x_3,\ x_4\geq 0,\ \ x_1\leq \ell_1,\ \ x_4\leq \ell_2,$$
$$2x_1+x_2+2x_3+2x_4\leq 2(\ell_1+\ell_2),$$
$$x_1+x_2+x_3+2x_4\leq \ell_1+2\ell_2.$$

Let $\pi_1:\Lambda_\mathbb{R}\times \mathbb{R}^4\rightarrow\Lambda_\mathbb{R}$ and $\pi_2:\Lambda_\mathbb{R}\times \mathbb{R}^4\rightarrow \mathbb{R}^4$ be the corresponding linear projections. For $\lambda_1,\lambda_2\in\mathbb{N}$, we define
$$\operatorname{SP}_4(\lambda_1,\lambda_2):=\pi_2(\pi_1^{-1}((\lambda_1,\lambda_2))\cap\Gamma(\mathfrak{n}^-))\subset\mathbb{R}^4.$$
Up to the permutation of the coordinates $x_2$ and $x_3$, the polytope $\operatorname{SP}_4(\lambda_1,\lambda_2)$ is the same as that in \cite{Kir1}, Proposition 4.1, which is unimodularly equivalent to the Newton-Okounkov body of some valuation arising from inclusions of (translated) Schubert varieties.
\par
This polytope $\operatorname{SP}_4(\lambda_1,\lambda_2)$ is unimodularly equivalent to neither the Gelfand-Tsetlin polytope of $\mathfrak{sp}_4$ nor
the polytope of $\mathfrak{sp}_4$ described in Theorem~\ref{theoremthree}.
\end{exam}

\begin{exam}
Let $G=Sp_{2n}$, we label the positive roots as in the section before:
for $1 \leq i \leq j \leq n$, $\alpha_{i,j} = \alpha_i + \ldots + \alpha_j$ and $\alpha_{i,\overline{j}} = \alpha_i + \ldots + \alpha_n + \ldots + \alpha_j$. Set $N=n^2$, and let us fix a bijection between the set of positive roots and the canonical basis of $\mathbb N^N$.
We consider the following integral weight function
\[
\Psi(e_{i,j}) = (2n-j)(j-i+1)\ \text{  and  }\ \Psi(e_{i,\overline{j}}) = j(2n - i - j +1).
\]
Let ``$>$'' be one of the total orders refining the induced partial order ``$>_{\Psi}$'' (Definition~\ref{weightorder}). Again,
as for the choice of the bijection above, the result is the same for any choice. It has been shown in \cite{BFF15}:
\begin{thm}\it
The global essential monoid $\Gamma$ is the monoid of integral points in the closed rational polyhedral cone
${\mathcal C}$ defined in Theorem~{\rm \ref{theoremthree}} in the $Sp_{2n}$-case.
\end{thm}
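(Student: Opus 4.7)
The plan is to prove this by reducing to Theorem~\ref{theoremthree} (the homogeneous PBW--type case) for $Sp_{2n}$ and exploiting the fact that the weight function $\Psi$ is tailored so that the $\Psi$-weighted lex order induces the same essential monoid as a good-order homogeneous right lex order. Concretely, by Proposition~\ref{plus} and Corollary~\ref{semigroup} it suffices to describe $es(\mathfrak n^-)$: indeed $\Gamma=\bigsqcup_\lambda \{\lambda\}\times es(\lambda)$, and the saturation and finite generation follow from Theorem~\ref{theoremthree} once the equality of the set of essential multi-exponents is established. The monoid property of $es(\mathfrak n^-)$ in combination with the already known weight--face description of $\mathcal C$ then produces the claimed integral cone structure.

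To identify $es(\mathfrak n^-)$ with the integral points of the symplectic Dyck-path cone $\mathcal C$ defined in Theorem~\ref{theoremthree}, I would carry out two steps. First, for the inclusion $\mathcal C\cap \mathbb Z^N\subseteq es(\mathfrak n^-)$, take any $\mathbf m\in \mathcal C\cap \mathbb Z^N$ and a dominant $\lambda\gg 0$ with $(\lambda,\mathbf m)\in\mathcal C$; the FFL construction in \cite{FFL2,FFL3} provides an explicit monomial basis of $V(\lambda)$ indexed by such $\mathbf m$. The non-vanishing of $f^{(\mathbf m)}v_\lambda$ modulo $V(\lambda)_{<\mathbf m}$ is then inherited because the $\Psi$-weighted order refines the PBW degree in the sense that $\Psi$ is strictly positive on $\mathbb N^N\setminus\{0\}$ and linear, so that the FFL straightening rules remain filtration compatible. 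Lemma~\ref{repessentialversusessential} then transfers essentiality from $V(\lambda)$ to $\mathfrak n^-$. Second, for the reverse inclusion, take $\mathbf m\notin\mathcal C$: there is some symplectic Dyck path $\mathbf b=(\delta_1,\dots,\delta_r)$ violating the inequality. The key point is to check that the FFL straightening relation which eliminates $f^{(\mathbf m)}$ against strictly smaller PBW monomials in the homogeneous case still has $f^{(\mathbf m)}$ as its $\Psi$-weighted leading term. By the explicit combinatorics of symplectic Dyck paths, each of the competing monomials $f^{(\mathbf m')}$ in the relation satisfies $wt(\mathbf m')=wt(\mathbf m)$ and $\Psi(\mathbf m')\le \Psi(\mathbf m)$, with equality forcing $\mathbf m'<_{\mathrm{rlex}}\mathbf m$; this is what the formulas $\Psi(e_{i,j})=(2n-j)(j-i+1)$ and $\Psi(e_{i,\overline j})=j(2n-i-j+1)$ are designed to ensure (they are obtained from the height of the root weighted by an ``end-point'' factor that penalises precisely the exchanges appearing in the symplectic Dyck-path reductions).

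The main obstacle is establishing the strong admissibility of $\Psi$ in the symplectic case, i.e.\ verifying combinatorially that for every FFL straightening relation $f^{(\mathbf m)}=\sum_{\mathbf m'} c_{\mathbf m'} f^{(\mathbf m')}$ one has $\Psi(\mathbf m)\ge \Psi(\mathbf m')$ with equality only for terms that are strictly smaller in the refining lex order. This is the symplectic analogue of the quiver-theoretic argument used in \cite{FFR15} for $SL_{n+1}$; unlike type $\tt A$, where it follows from a clean Hall-algebra degeneration argument and a uniform $\Psi(e_{i,j})=(j-i+1)(n-j+1)$, in the symplectic setting one has to treat the long roots $\alpha_{i,\overline j}$ separately and the symmetry of the two formulas above must be matched with the asymmetric shapes of symplectic Dyck paths starting at $\alpha_i$ and ending in either $\alpha_j$ or $\alpha_{j,\overline j}$. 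Once this strong admissibility is checked, Proposition~\ref{semigleichsemi1} identifies $\Gamma$ with the valuation monoid, and invoking Theorem~\ref{theoremthree} finishes the proof; saturation and finite generation follow at once since $\mathcal C$ is a rational polyhedral cone.
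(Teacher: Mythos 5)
The paper does not prove this theorem at all: it is stated with the preface ``It has been shown in \cite{BFF15}'' and the reference to Backhaus--Fang--Fourier is the entirety of the ``proof''. So there is no in-text argument to compare your proposal against.

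As for the proposal itself: the outline is reasonable and, in spirit, is the kind of thing [BFF15] carries out, but it contains a genuine gap that you yourself flag. The entire content of this theorem is the combinatorial verification that the specific $\Psi$ defined by $\Psi(e_{i,j}) = (2n-j)(j-i+1)$ and $\Psi(e_{i,\overline{j}}) = j(2n - i - j +1)$ is compatible with the FFL straightening relations for $Sp_{2n}$; you call this ``the main obstacle'' and then close with ``Once this strong admissibility is checked \dots the proof is finished.'' That is not a proof, it is a restatement of what must be proved. You also need to be careful about what exactly must be verified: since the theorem is asserted for an \emph{arbitrary} refinement of $>_\Psi$, the condition you state (that ties in $\Psi$ can be broken by $<_{\mathrm{rlex}}$) is too weak -- one needs $\Psi$ to \emph{strictly} decrease on every competing term in each straightening relation, so that the associated graded is independent of how $>_\Psi$ is refined. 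Your ``equality forces $<_{\mathrm{rlex}}$'' clause would only cover a single refinement.

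A further structural remark: the first half of your argument (showing $\mathcal C \cap \mathbb Z^N \subseteq es(\mathfrak n^-)$) is not needed as a separate step. Once one knows $es(\mathfrak n^-) \subseteq \mathcal C \cap \mathbb Z^N$ (i.e.\ that $\mathbf m \notin \mathcal C$ implies $f^{(\mathbf m)}$ is eliminable against strictly smaller monomials in the chosen order), equality follows by a weight-space dimension count, since both $\{\,f^{(\mathbf m)} : \mathbf m \in es(\mathfrak n^-)\,\}$ and the FFL set $\{\,f^{(\mathbf m)} : \mathbf m \in \mathcal C \cap \mathbb Z^N\,\}$ are bases of $\mathcal U(\mathfrak n^-)$ graded by $T$-weight. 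In your Step 1 the claim that ``essentiality is inherited because $\Psi$ is strictly positive and linear'' does not actually follow from those two properties -- the homogeneous degree used in \cite{FFL3} is also strictly positive and linear, and different strictly positive linear $\Psi$'s do give different essential monoids in general (the $\mathfrak{sp}_4$ example elsewhere in the paper illustrates exactly this). The positive inclusion really is a by-product of the strong admissibility check plus the dimension count, not a separate soft argument. So the proposal correctly identifies the key lemma, but neither proves it nor states it in the precise form needed.
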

\end{exam}

\section{Degenerations of spherical affine $G$-varieties}\label{AffG}
Throughout this section, we fix a sequence $(S,>)$ with an ASM,
so $\Gamma$ is assumed to be finitely generated and saturated.
Under this assumption, the methods developed by Caldero \cite{C1} and by Alexeev and Brion in \cite{AB}
apply also to this situation. One could in fact just refer to the articles, but to make the paper more self contained,
we recall most of the arguments, but without proofs.

\subsection{Coordinate rings and filtrations}
Let $Y$ be an irreducible affine $G$-variety with coordinate ring $R=\mathbb C[Y]$. Let $R=\bigoplus_{\la\in\Lambda^+} R_{\la}$
be the isotypic decomposition. Denote by $R^U$ the subring of $U$-invariant functions and let $R^G$ be the ring of $G$-invariant
functions. Recall that both rings, $R^U$ and $R^G$, are finitely generated. Let $R^U=\bigoplus_{\la\in\Lambda^+} R^U_\la$ be the decomposition into
$T$-weight spaces, recall that $R^U_\la$ is always a finitely generated $R^G=R^U_0$-module. For a dominant weight $\la$ denote by $\la^*$
the highest weight of the dual representation $V(\la)^*$ of $V(\la)$. One has a canonical
isomorphism of $R^G$-$G$-modules:
 \begin{equation}\label{Coordringdecomp}
R\simeq \bigoplus_{\la\in\Lambda^+} \hbox{Hom}_G(V(\la)^*, R)\otimes V(\la)^*\simeq \bigoplus_{\la\in\Lambda^+} R^U_{\la^*}\otimes V(\la)^*,
\end{equation}
where an element $\phi\otimes \xi\in \hbox{Hom}_G(V(\la)^*, R)\otimes V(\la)^*$ is mapped onto $\phi(\xi)\in R$ respectively onto
$\phi(\xi_\la)\otimes \xi\in R^U_{\la^*}\otimes V(\la)^* $. Here $\xi_\la$ is a fixed highest weight vector in $V(\la)^*$.
Let  $\succeq_{wt}$ be the usual partial order on the set of weights, we filter $R$ by $G$-stable subspaces: for $\la\in\Lambda^+$ set
\begin{equation}\label{weakfiltration}
R_{\preceq_{wt} \la}=\bigoplus_{\substack{\mu\in\Lambda^+\\ \mu\preceq_{wt} \la}} R^U_{\mu^*}\otimes V(\mu)^*.
\end{equation}
The usual decomposition rules for tensor products imply
$R_{\preceq_{wt} \la}\cdot R_{\preceq_{wt} \mu}\subseteq R_{\preceq_{{wt}} \la+\mu}$.
We want to define a finer filtration, it will depend on the type of the order we have chosen.

\subsection{Refining the filtration in \eqref{weakfiltration}} \label{algfilt}
Let ``$>$" be the fixed $\Psi$-weighted lex order on $\mathbb N^N$. We
define a new partial order ``$>_{alg}$" on $\Lambda^+\times\mathbb N^N$ as follows:
\begin{defn}\label{algorder}
$(\la,\bp)>_{alg}(\mu,\bq)\hbox{\ if and only if either\ }\la\succ_{wt} \mu\hbox{\ or\ }\la=\mu \hbox{\ and\ }\bp<\bq$.
\end{defn}
Note that we turn around ``$>$" when going to the coordinate ring.
We refine the filtration in \eqref{weakfiltration} as follows: for $\la\in\Lambda^+$ and $\bp\in es(\la)$ set
\begin{equation}\label{deg1}
R_{\le_{alg}(\la,\bp)}
=\bigoplus_{\substack{(\mu,\bq)\in\Gamma\\ (\mu,\bq)\le_{alg}(\la,\bp)}} R^U_{\mu^*}\otimes \xi_{\mu,\bq}
\hbox{\ and\ }
R_{<_{alg}(\la,\bp)}=\bigoplus_{\substack{(\mu,\bq)\in\Gamma\\(\mu,\bq)<_{alg}(\la,\bp)}} R^U_{\mu^*}\otimes \xi_{\mu,\bq}.
\end{equation}
\begin{lem}\label{filtR}
$$
R_{\le_{alg}(\la,\bp)}R_{\le_{alg}(\mu,\bq)}\subseteq R_{\le_{alg}(\la+\mu,\bp+\bq)},\quad R_{\le_{alg}(\la,\bp)}R_{<_{alg}(\mu,\bq)}\subseteq R_{<_{alg}(\la+\mu,\bp+\bq)}.
$$
\end{lem}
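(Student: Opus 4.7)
The plan is to reduce the statement to products of ``elementary tensors'' under the isotypic decomposition \eqref{Coordringdecomp}, and then use Lemma~\ref{sc1} to control the leading Cartan component while the remaining isotypic components sit in strictly lower pieces of the weight filtration \eqref{weakfiltration}. By bilinearity and the definition \eqref{deg1}, it suffices to prove: for $\phi\in R^U_{\la^*}$, $\psi\in R^U_{\mu^*}$, $(\la,\bp),(\mu,\bq)\in\Gamma$, the product of $\phi\otimes\xi_{\la,\bp}$ and $\psi\otimes\xi_{\mu,\bq}$ in $R$ lies in $R_{\le_{alg}(\la+\mu,\bp+\bq)}$, with the inclusion strict whenever one factor is in the corresponding strict piece.

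First I would split the product using the $G$-equivariant decomposition
\[
V(\la)^*\otimes V(\mu)^*=V(\la+\mu)^*\oplus\bigoplus_{\nu\prec_{wt}\la+\mu}V(\nu)^{*\oplus c_\nu^{\la,\mu}}.
\]
Because the multiplication map $R\otimes R\to R$ is $G$-equivariant and $R^U$-linear, the product of the two elementary tensors decomposes accordingly. Any isotypic piece of weight $\nu\prec_{wt}\la+\mu$ contributes to $R^U_{\nu^*}\otimes V(\nu)^*$ and hence lies in $R_{<_{alg}(\la+\mu,\bp+\bq)}$ by the first clause of Definition~\ref{algorder}, regardless of the multi-exponent appearing in the $\xi$-factor.

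The heart of the argument is the $V(\la+\mu)^*$-component. Under the canonical embedding $V(\la+\mu)^*\hookrightarrow \mathbb C[G/\hskip -3.5pt/U]$, the Cartan projection of the product is precisely $\phi\psi$ tensored with the product $\xi_{\la,\bp}\cdot\xi_{\mu,\bq}$ computed inside $\mathbb C[G/\hskip -3.5pt/U]$, since both sides are the unique $G$-equivariant prescription extending the highest-weight-vector value. Lemma~\ref{sc1} then gives
\[
\xi_{\la,\bp}\,\xi_{\mu,\bq}=\xi_{\la+\mu,\bp+\bq}+\sum_{\substack{\br\in es(\la+\mu)\\ \br>\bp+\bq}}c_{\la,\bp;\mu,\bq}^{\la+\mu,\br}\xi_{\la+\mu,\br},
\]
so the Cartan contribution to the product is a sum of terms $\phi\psi\otimes\xi_{\la+\mu,\br}$ with $\br\ge\bp+\bq$. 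Every such pair satisfies $(\la+\mu,\br)\le_{alg}(\la+\mu,\bp+\bq)$ by the second clause of Definition~\ref{algorder}, and together with the previous paragraph this establishes the non-strict inclusion.

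For the strict statement, any $g\in R_{<_{alg}(\mu,\bq)}$ is a combination of $\psi'\otimes\xi_{\mu',\bq'}$ with either $\mu'\prec_{wt}\mu$---in which case every isotypic component of the product has weight $\preceq_{wt}\la+\mu'\prec_{wt}\la+\mu$ strictly---or $\mu'=\mu$ and $\bq'>\bq$, in which case the leading term $\xi_{\la+\mu,\bp+\bq'}$ already satisfies $\bp+\bq'>\bp+\bq$, placing it strictly below $(\la+\mu,\bp+\bq)$ in $\le_{alg}$. The step I expect to require the most care is verifying the identification of the Cartan projection with the product in $\mathbb C[G/\hskip -3.5pt/U]$: the multiplication on $R$ is not the naive tensor-product multiplication on $\bigoplus R^U_{\la^*}\otimes V(\la)^*$, and one must check that evaluating both sides on a highest weight vector for $V(\la+\mu)^*$ produces the same element of $R^U$, which is where the scalar $c_{\la,\bp;\mu,\bq}^{\la+\mu,\bp+\bq}=1$ from Lemma~\ref{sc1} enters decisively.
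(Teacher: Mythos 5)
Your proof is correct and follows essentially the same route as the paper: both reduce to products of elementary tensors $f_\eta\otimes\xi_{\eta,\br}$, invoke the tensor-product decomposition to place all lower isotypic components strictly below in the $>_{alg}$ order, and use Lemma~\ref{sc1} to control the Cartan component. The paper's proof is stated more compactly (one displayed inclusion followed by the congruence \eqref{degopplex}), but the substance is identical, and you have correctly isolated the subtle point — that the $V(\la+\mu)^*$-component of the product in $R$ is governed by the structure constants of the multiplication in $\mathbb C[G/\hskip-3.5pt/U]$ via $G$-equivariance.
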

\begin{proof}
Let $f_\eta\in  R^U_{\eta^*}$, $f_{\eta'}\in  R^U_{{\eta'}^*}$ and $\xi_{\eta,\br}\in V(\eta)^*$, $\xi_{\eta',\br'}\in V(\eta')^*$ for $\eta,\eta'\in\Lambda^+$
and $\br\in es(\eta)$, $\br'\in es(\eta')$. Then the tensor product rules and Lemma~\ref{sc1} imply:
$$
(f_\eta\otimes \xi_{\eta,\br})(f_{\eta'}\otimes \xi_{\eta',\br'})\in (f_\eta f_{\eta'})\otimes \xi_{\eta+\eta',\br+\br'} +
\sum_{\substack{\mathbf t\in es(\eta+\eta')\\ \mathbf t>\br+\br'}}
\mathbb C(f_\eta f_{\eta'})\otimes \xi_{\eta+\eta',\bt}+\sum_{\delta\prec_{wt}\eta+\eta'}R^U_\delta\otimes V(\delta)^*.
$$
It follows that
\begin{equation}\label{degopplex}
(f_\eta\otimes \xi_{\eta,\br})(f_{\eta'}\otimes \xi_{\eta',\br})=(f_\eta f_{\eta'})\otimes \xi_{\eta+\eta',\br+\br'}
\mod R_{<_{alg}(\eta+\eta',\br+\br')},
\end{equation}
which finishes the proof of the lemma.
\end{proof}
\subsubsection{Degeneration}
As a consequence we can define the associated graded algebra
$$
gr R=\bigoplus_{(\la,\bp)\in\Gamma}  R_{\le_{alg} (\la,\bp)}/R_{<_{alg}(\la,\bp)}.
$$
For $Y=G/\hskip -3.5pt/U$ set $A=\mathbb C[G/\hskip -3.5pt/U]$. In this case we have $A_\la^U\simeq\mathbb C$ in \eqref{deg1} for all dominant weights $\la\in\Lambda^+$.
Now Lemma~\ref{sc1} and equation \eqref{degopplex} in the proof of Lemma~\ref{filtR} show:
\begin{coro}\label{grAsemi}
$gr A\simeq \mathbb C[\Gamma]$.
\end{coro}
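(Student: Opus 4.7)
\medskip
\noindent\textbf{Proof proposal.} The plan is to write down an explicit isomorphism $\Phi \colon \mathbb C[\Gamma] \to gr\,A$ on basis elements and then verify separately that $\Phi$ is bijective and multiplicative. The special feature of $Y=G/\hskip -3.5pt/U$ is that by \eqref{coordinateGnachU} one has $A^U_\lambda \simeq \mathbb C$ for every $\lambda\in\Lambda^+$, so the summand $A^U_{\mu^*}\otimes \xi_{\mu,\mathbf q}$ appearing in \eqref{deg1} is $1$-dimensional for each $(\mu,\mathbf q)\in\Gamma$. Combined with the fact (from the dual essential basis) that $\{\xi_{\lambda,\mathbf p}\mid(\lambda,\mathbf p)\in\Gamma\}$ is a vector space basis of $A$, this immediately gives
\[
A_{\le_{alg}(\lambda,\mathbf p)}/A_{<_{alg}(\lambda,\mathbf p)}\;=\;\mathbb C\cdot[\xi_{\lambda,\mathbf p}]\qquad\text{for every }(\lambda,\mathbf p)\in\Gamma,
\]
and is zero otherwise. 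In particular, as a graded vector space $gr\,A$ has a basis indexed by $\Gamma$, so the $\mathbb C$-linear map $\Phi$ sending the monoid basis vector attached to $(\lambda,\mathbf p)\in\Gamma$ to $[\xi_{\lambda,\mathbf p}]\in gr\,A$ is a bijection.

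To verify that $\Phi$ is an algebra homomorphism, I invoke Lemma~\ref{sc1}, which gives
\[
\xi_{\lambda,\mathbf p}\,\xi_{\mu,\mathbf q}\;=\;\xi_{\lambda+\mu,\mathbf p+\mathbf q}\;+\;\sum_{\substack{\mathbf r\in es(\lambda+\mu)\\ \mathbf r>\mathbf p+\mathbf q}} c_{\lambda,\mathbf p;\mu,\mathbf q}^{\lambda+\mu,\mathbf r}\,\xi_{\lambda+\mu,\mathbf r}.
\]
By the very definition of $>_{alg}$ in Definition~\ref{algorder}, for fixed weight $\lambda+\mu$ one has $(\lambda+\mu,\mathbf r)<_{alg}(\lambda+\mu,\mathbf p+\mathbf q)$ precisely when $\mathbf r>\mathbf p+\mathbf q$; hence every correction term above lies in $A_{<_{alg}(\lambda+\mu,\mathbf p+\mathbf q)}$. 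Passing to the associated graded therefore yields
\[
[\xi_{\lambda,\mathbf p}]\cdot[\xi_{\mu,\mathbf q}]\;=\;[\xi_{\lambda+\mu,\mathbf p+\mathbf q}]\qquad\text{in }gr\,A,
\]
which is exactly the monoid multiplication on $\mathbb C[\Gamma]$; this is essentially a restatement of equation \eqref{degopplex} in the proof of Lemma~\ref{filtR}, specialized to the case where each $R^U_{\mu^*}$ is one-dimensional.

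The only subtlety to watch carefully is that the reversal of order when passing from the monomial order $>$ on $\mathbb N^N$ to the filtration order $>_{alg}$ is exactly what is needed in order for the ``higher'' terms produced by Lemma~\ref{sc1} to become strictly ``lower'' in the filtration; this sign convention is the one genuine thing to check, and once it is fixed everything else is formal. Thus $\Phi$ is a graded algebra isomorphism, which gives $gr\,A\simeq\mathbb C[\Gamma]$.
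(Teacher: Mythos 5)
Your proof is correct and takes essentially the same approach as the paper: the paper derives the corollary by combining the observation $A^U_\lambda\simeq\mathbb C$ with Lemma~\ref{sc1} and equation \eqref{degopplex}, which is exactly what you do, just spelled out more explicitly (including the sign-reversal point about $>$ versus $>_{alg}$, which is indeed the only thing one needs to be careful about).
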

Let $\mathbb T$ be the torus $(\mathbb C^*)^N$. Recall that we can endow $gr A\simeq \mathbb C[\Gamma]$
naturally with the structure of a $T\times \mathbb T$-algebra, see \eqref{TACTION}.
The description of $gr R$ in general can be reduced to that of $gr A$ as follows: equation~\eqref{degopplex} in the proof of Lemma~\ref{filtR} implies
that $gr R$ may be viewed as the vector space
$$
\bigoplus_{(\la,\bp)\in \Gamma}R^U_{\la^*}\otimes \xi_{\la,\bp}
$$
with componentwise multiplication. Now $T$ acts on $R^U_{\la^*}$ via the character $\la^*$. Recall that we have a second action on $\mathbb C[G]$
coming from the right multiplication of $G$ on $G$. Hence on $\mathbb C[G]^U$ we have a $T$ action
such that $T$ acts on $V(\la)^*\otimes v_\la$ by $\la(t)$ (see \eqref{coordinateGnachU}). If we twist the $T$-action by $w_0:T\rightarrow T$,
$t\mapsto w_0(t)$, then the right action is given by the character $-\la^*$. Since $R$ is finitely generated and without zero divisors, so is $R^U$, and since
$gr A$ is finitely generated, normal (because $\Gamma$ is finitely generated and saturated)
and without zero divisors, so is the ring of invariants $(R^U\otimes gr A)^T$. In particular, this is a normal affine ring,
i.e., the coordinate ring of a normal irreducible affine variety. Summarizing we get (see \cite{AB,Gro}):
\begin{prop}\label{first}\it
The associated graded algebra $gr R$ is, as a $R^G$-$T$-algebra, isomorphic to
$(R^U\otimes gr A)^T$, where $T$ acts on $R^U$ in the standard way and the action on $gr A$ is induced by the twisted right action on $G/\hskip -3.5pt/U$.
As a consequence, the $\mathbb T$-action on $gr A$ makes $gr R$ into a normal affine $T\times \mathbb T$-algebra, and the
$\mathbb T$-invariant subring is isomorphic to $R^G$.
\end{prop}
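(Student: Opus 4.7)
The plan is to establish the isomorphism $gr\,R \simeq (R^U \otimes gr\,A)^T$ by matching $T$-weight decompositions, verify that the identification respects multiplication via the formula \eqref{degopplex}, and then deduce the $T\times\mathbb T$-algebra structure, the normality, and the statement about the invariant subring.

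First, I would identify the underlying vector spaces. By \eqref{deg1} the graded pieces of $gr\,R$ are $R^U_{\lambda^*}\otimes \xi_{\lambda,\mathbf p}$ indexed by $(\lambda,\mathbf p)\in\Gamma$, and by Lemma~\ref{filtR}, more precisely by equation \eqref{degopplex}, the product in $gr\,R$ is componentwise:
\[
(f\otimes \xi_{\lambda,\mathbf p})\cdot(g\otimes \xi_{\mu,\mathbf q})=fg\otimes \xi_{\lambda+\mu,\mathbf p+\mathbf q}.
\]
On the other side, the left $T$-action gives $R^U_{\mu^*}$ weight $\mu^*$, while the twisted right $T$-action on $gr\,A$ makes $\xi_{\lambda,\mathbf p}$ have weight $-\lambda^*$ (the untwisted right action gives weight $\lambda$ via $V(\lambda)^*\otimes v_\lambda\subset \mathbb C[G]^U$, and the twist by $w_0$ sends $\lambda$ to $-\lambda^*$). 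Thus $T$-invariance forces $\mu=\lambda$, giving
\[
(R^U\otimes gr\,A)^T=\bigoplus_{(\lambda,\mathbf p)\in\Gamma} R^U_{\lambda^*}\otimes \xi_{\lambda,\mathbf p},
\]
with componentwise multiplication, yielding the desired $R^G$-$T$-algebra isomorphism (noting $R^G=R^U_0$ embeds in both sides via $f\mapsto f\otimes \xi_{0,0}$).

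Next, the $\mathbb T$-action on $\mathbb C[\Gamma]\simeq gr\,A$ from \eqref{TACTION} depends only on $\mathbf p$, while the coupling $T$-action depends only on $\lambda$, so the two actions commute and the $\mathbb T$-action preserves $T$-invariants. This promotes $gr\,R$ to a $T\times \mathbb T$-algebra. For normality: $R^U$ is finitely generated (Grosshans) and normal (normality passes from $R$ to $R^U$, since $a/b\in\mathrm{Frac}(R^U)$ integral over $R^U$ lies in $R$, and being $U$-fixed lies in $R^U$); $gr\,A\simeq \mathbb C[\Gamma]$ is finitely generated and normal since $\Gamma$ is saturated. Hence $R^U\otimes_{\mathbb C} gr\,A$ is a finitely generated normal domain over $\mathbb C$, and taking the invariants $(R^U\otimes gr\,A)^T$ under the reductive group $T$ preserves normality and finite generation.

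Finally, the $\mathbb T$-fixed part of $\mathbb C[\Gamma]$ is the subalgebra $\bigoplus_\lambda \mathbb C\,\xi_{\lambda,0}$, so the $\mathbb T$-invariants in $gr\,R$ are $\bigoplus_\lambda R^U_{\lambda^*}\otimes \xi_{\lambda,0}\simeq R^U$ as an algebra; the full $T\times \mathbb T$-invariants then collapse to $R^U_0=R^G$, which is the reading of the concluding claim. The main subtle point will be correctly tracking the twist by $w_0$ in the $T$-action on $gr\,A$: without it, the $T$-coupling between $R^U$ and $gr\,A$ does not reproduce the graded pieces of $gr\,R$, and the whole identification collapses. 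Beyond this bookkeeping the argument is essentially formal, matching graded pieces and invoking standard properties of reductive invariants on normal affine varieties as in \cite{AB, Gro}.
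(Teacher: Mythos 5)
Your argument follows the paper's own route: identify $gr\,R$ with $\bigoplus_{(\lambda,\mathbf p)\in\Gamma}R^U_{\lambda^*}\otimes\xi_{\lambda,\mathbf p}$ carrying componentwise multiplication (via \eqref{degopplex}), match this with $(R^U\otimes gr\,A)^T$ by comparing $T$-weights (standard $\lambda^*$ on $R^U_{\lambda^*}$, twisted right $-\lambda^*$ on $\xi_{\lambda,\mathbf p}$), and derive the structural consequences. Your computation of the invariants is correct and is actually more careful than the statement itself: since $\mathbf 0\in es(\lambda)$ for every $\lambda$, the $\mathbb T$-fixed part of $gr\,R$ is $\bigoplus_\lambda R^U_{\lambda^*}\otimes\xi_{\lambda,\mathbf 0}\simeq R^U$, and one only obtains $R^G$ after taking $T$-invariants as well. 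The paper's proof paragraph never carries out this calculation, and the final clause of the proposition as printed should be read as the $T\times\mathbb T$-invariants, exactly as you do.

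One shared gap worth flagging: to conclude that $(R^U\otimes gr\,A)^T$ is normal you use that $R^U$ is normal, and your justification (``normality passes from $R$ to $R^U$'') presupposes $R$ normal. The proposition as stated places no normality hypothesis on $Y$; the paper's own paragraph records only that $R^U$ is a finitely generated domain before asserting normality of the invariant ring, so it has the same lacuna. This is more than cosmetic: Proposition~\ref{normal} ($Y$ normal $\Leftrightarrow Y_0$ normal) shows $gr\,R$ cannot be normal when $Y$ is not. The normality clause should carry ``$Y$ normal'' as an explicit hypothesis, as is effectively the case in \cite{AB} where the varieties under consideration are spherical.
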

The proof of the following proposition can be found in \cite{AB}, Proposition 2.2, the only difference being
that we need Lemma~\ref{straight} in section~\ref{speciallinearform} below to go from the multifiltration  to an $\mathbb N$-filtration.
\begin{prop}\label{second}\it
There exists an affine $\mathbb N$-graded $T$-algebra $\mathfrak R$ and a $T$-invariant
element $t\in \mathfrak R_1$ such that
\begin{itemize}
\item[{\it (i)}] $t$ is a nonzero-divisor in $\mathfrak R$, i.e., $\mathfrak R$ is flat over the polynomial ring $\mathbb C[t]$.
\item[{\it (ii)}] The $\mathbb C[t, t^{-1}]$-$T$-algebra $\mathfrak R[t^{-1}]$ is isomorphic to $R[t, t^{-1}]$.
\item[{\it (iii)}] The $T$-algebra $\mathfrak R/t \mathfrak R$ is isomorphic to $gr R$.
\end{itemize}
\end{prop}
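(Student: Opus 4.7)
The plan is to construct $\mathfrak R$ as a Rees-type algebra, following \cite{AB}, Proposition 2.2, once the multi-index filtration on $R$ is reduced to an $\mathbb N$-filtration. First, I would invoke Lemma~\ref{straight} to produce a $\mathbb Z$-linear form $\ell:\Lambda\times\mathbb Z^N\rightarrow\mathbb Z$ with $\ell(\Gamma)\subseteq\mathbb N$ such that the partial order $>_{alg}$ on $\Gamma$ (Definition~\ref{algorder}) is compatible with $\ell$, in the sense that $(\la,\bp)>_{alg}(\mu,\bq)$ implies $\ell(\la,\bp)\ge \ell(\mu,\bq)$ and the graded pieces $R_{\le_{alg}(\la,\bp)}/R_{<_{alg}(\la,\bp)}$ are distributed cleanly over the levels of $\ell$. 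Using $\ell$, I would define a $T$-stable $\mathbb N$-filtration of $R$ by
\[
R_n = \sum_{\substack{(\mu,\bq)\in \Gamma \\ \ell(\mu,\bq)\le n}} R^U_{\mu^*}\otimes \xi_{\mu,\bq},\qquad n\ge 0.
\]
By Lemma~\ref{filtR}, $R_nR_m\subseteq R_{n+m}$, and each $R_n$ is a finite-dimensional $T$-stable subspace with $\bigcup_n R_n=R$.

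Next, I would set $\mathfrak R:=\bigoplus_{n\ge 0} R_n\, t^n\subseteq R[t]$, the associated Rees algebra, graded by the $t$-degree and carrying the natural $T$-action inherited from $R$. The element $t\in\mathfrak R_1$ is manifestly $T$-invariant. For part (i), multiplication by $t$ sends the summand $R_n t^n$ injectively into $R_{n+1}t^{n+1}$ via the inclusion $R_n\hookrightarrow R_{n+1}$, so $t$ is a nonzero-divisor, and flatness of $\mathfrak R$ over $\mathbb C[t]$ follows. For part (ii), inverting $t$ yields $\mathfrak R[t^{-1}]=\bigcup_n t^{-n}R_n\otimes\mathbb C[t,t^{-1}]=R\otimes\mathbb C[t,t^{-1}]=R[t,t^{-1}]$ as $T$-algebras. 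For part (iii), the quotient $\mathfrak R/t\mathfrak R$ is the $\mathbb N$-graded algebra $\bigoplus_{n\ge 0} R_n/R_{n-1}$ (with $R_{-1}=0$); the compatibility of $\ell$ with $>_{alg}$ ensures that each such quotient splits canonically as $\bigoplus_{\ell(\mu,\bq)=n} R_{\le_{alg}(\mu,\bq)}/R_{<_{alg}(\mu,\bq)}$, which together assemble to the $\Gamma$-graded algebra $gr\,R$ of Section~\ref{algfilt}.

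Finite generation of $\mathfrak R$ as a $\mathbb C$-algebra (hence its affineness) follows from the fact that $R$ is finitely generated and $gr\,R$ is finitely generated. Indeed, $gr\,R\simeq (R^U\otimes gr\,A)^T$ by Proposition~\ref{first}, and by Corollary~\ref{grAsemi} together with the hypothesis that $\Gamma=\Gamma(S,>)$ is finitely generated (the ASM assumption), $gr\,A=\mathbb C[\Gamma]$ is finitely generated; a standard lifting of generators of $gr\,R$ to $R_n t^n$ for appropriate $n$, adjoined to $t$, produces a finite set of algebra generators of $\mathfrak R$.

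The main obstacle will be step one: choosing the linear form $\ell$ so that the passage from the $\Gamma$-indexed multifiltration to the single-index filtration does not collapse the associated graded. This is exactly the role of Lemma~\ref{straight}, and the subtle point is to guarantee that no two filtration steps $R_{\le_{alg}(\la,\bp)}$ and $R_{\le_{alg}(\mu,\bq)}$ with incomparable indices are forced into the same level of $\ell$ in a way that glues their one-dimensional subquotients and deforms $gr\,R$. Once $\ell$ is in hand with the right separation property, the remaining parts of the proposition reduce to standard Rees-algebra bookkeeping as in \cite{AB}.
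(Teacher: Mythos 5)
Your proposal follows the same route the paper intends: the paper gives no proof of its own, but simply defers to \cite{AB}, Proposition~2.2, noting that Lemma~\ref{straight} replaces Caldero's Lemma~3.2 when passing from the $\Gamma$-multifiltration to a one-parameter $\mathbb N$-filtration, and your Rees algebra $\mathfrak R=\bigoplus_{n\ge0}R_n t^n$ is exactly the construction used there. The overall plan is therefore correct.

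Two remarks of substance. First, the pieces $R_n$ are $T$-stable but generally not finite-dimensional: $R^U_{\mu^*}$ is a finitely generated $R^G$-module, not a finite-dimensional vector space, and when $G$ has nontrivial characters infinitely many $(\mu,\mathbf q)\in\Gamma$ satisfy $\ell(\mu,\mathbf q)=0$. This does not affect the Rees-algebra bookkeeping, but the claim as stated is false. Second, the subtlety you flag in the final paragraph is slightly misidentified. Incomparable indices landing in the same $\ell$-level is harmless: their contributions to $R_n/R_{n-1}$ are linearly independent summands and the multiplication keeps them separate, so nothing is glued. The genuine danger is that a \emph{comparable} lower-order index $(\gamma,\mathbf r)<_{alg}(\la+\mu,\mathbf p+\mathbf q)$ produced by the straightening in \eqref{degopplex} could have $\ell(\gamma,\mathbf r)=\ell(\la+\mu,\mathbf p+\mathbf q)$; if so, it would not be killed modulo $R_{n-1}$ and the quotient $\mathfrak R/t\mathfrak R$ would carry a multiplication different from that of $gr\,R$. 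Since Lemma~\ref{straight} only provides strict separation on a prescribed \emph{finite} subset $M\subset\Gamma$, and $\ell$ need not weakly respect $>_{alg}$ globally, the actual proof requires choosing $M$ to contain the indices arising from the finitely many straightening relations among a fixed finite generating set of $R$ (and hence of $gr\,R$), and then arguing that additivity of both $\ell$ and $>_{alg}$ propagates the separation to all products one needs to control. This is the step carried out carefully in \cite{AB} and \cite{C1} and is the one place your sketch would need to be made precise.
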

\subsection{A special linear form}\label{speciallinearform}
We need a linear form on $\Lambda_{\mathbb R}\times\mathbb R^N$
which satisfies some strict inequalities. Given a finitely generated
monoid $\Gamma\subseteq \Lambda^+\times \mathbb N^N$
and a $\Gamma$-filtration of $\mathbb C[G/\hskip -3.5pt /U]$, the form
is useful to transform the $\Gamma$-filtration into an $\mathbb N$-filtration.	
In this sense, the following lemma plays the same role as Lemma~3.2 in \cite{C1}, it is only adapted to the more general type of
total orders introduced above. We say that a monoid $\Gamma\subseteq \Lambda^+\times \mathbb N^N$
{\it has no exceptional characters\/} 
if $(\la,\mathbf m)\in \Gamma$ and $\mathbf m\not=0$ implies that $\la$ is not a character of $G$.
We use on $\Lambda^+\times \mathbb N^N$ the partial  order ``$>_{alg}$".

\begin{lem}\label{straight}\it
Let $\Gamma\subset \Lambda^+\times \mathbb N^N$ be a finitely generated monoid which has no exceptional characters.
If $M\subset \Gamma$ is a finite subset, then there exists a linear form
$e:\Lambda_{\mathbb R}\times \mathbb R^N \rightarrow  \mathbb R$ such that $e(\Gamma)\subset\mathbb N$,
$e$ takes positive integral values at all $(\la,\mathbf m)\in \Gamma$ such that $\la$ is
not a character of $G$, and $e(\la,\mathbf m)>e(\mu,\mathbf m')$ whenever $(\la,\mathbf m),(\mu,\mathbf m')\in M$ are such that
$(\la,\mathbf m)>_{alg} (\mu,\mathbf m')$.
\end{lem}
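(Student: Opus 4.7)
The plan is to take $e$ as a sum $e(\lambda,\mathbf m)=C\ell(\lambda)+g(\mathbf m)$, with $\ell$ detecting the weight direction, $g$ refining on the monomial part, and $C$ a large positive integer. First I would pick $\ell\colon\Lambda_{\mathbb R}\to\mathbb R$, integer-valued on $\Lambda$, such that (a) $\ell\equiv 0$ on the character lattice of $G$, (b) $\ell\ge 0$ on $\Lambda^+$ with strict positivity on dominant non-characters, and (c) $\ell$ is strictly positive on $\mathcal R^+\setminus\{0\}$, so that $\ell(\lambda-\mu)\ge 1$ whenever $\lambda\succ_{wt}\mu$. A concrete choice is $\ell(\lambda)=\sum_{i=1}^n m_i\langle\lambda,\alpha_i^\vee\rangle$ with positive integer coefficients $m_i$ satisfying $\sum_j m_j\langle\alpha_i,\alpha_j^\vee\rangle\ge 1$ for all $i$; this is feasible because the inverse of the Cartan matrix has positive entries, and conditions (a)--(b) follow from the fact that the common kernel of the simple coroot pairings is exactly the character lattice.

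Next I would construct an integer linear form $g\colon\mathbb R^N\to\mathbb R$ realising the reverse of the monomial order on $M$: $g(\mathbf m)>g(\mathbf m')$ whenever $(\lambda,\mathbf m),(\lambda,\mathbf m')\in M$ share the same first component with $\mathbf m<\mathbf m'$ in ``$>$''. Existence of such $g$ is standard: any monomial order on $\mathbb N^N$ admits a representation by a matrix of real weights (Robbiano), and on the finite set $M$ the order is realised by a sufficiently generic rational combination of those rows, scaled to integers. Now set $e=C\ell+g$ and choose $C$ larger than both $\max_i|g(\mathbf m_i)|$, over a finite generating set $\{(\lambda_i,\mathbf m_i)\}$ of $\Gamma$ (which exists by finite generation), and $\max|g(\mathbf m)-g(\mathbf m')|$ over pairs in $M$.

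To verify the three conclusions: the hypothesis of no exceptional characters forces $\lambda_i$ to be a non-character whenever $\mathbf m_i\neq 0$, hence $\ell(\lambda_i)\ge 1$ and $e(\lambda_i,\mathbf m_i)\ge C-|g(\mathbf m_i)|\ge 1$; for generators with $\mathbf m_i=0$ one has $e(\lambda_i,0)=C\ell(\lambda_i)\ge 0$. Non-negative integer linearity then forces $e(\Gamma)\subset\mathbb N$, and strict positivity on $(\lambda,\mathbf m)\in\Gamma$ with $\lambda$ not a character follows because any decomposition into generators must involve at least one non-character generator (the character lattice is closed under sums), which contributes at least $1$ to $e$. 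Finally, for $(\lambda,\mathbf m)>_{alg}(\mu,\mathbf m')$ in $M$: if $\lambda\succ_{wt}\mu$ then $C(\ell(\lambda)-\ell(\mu))\ge C$ swamps the $g$-part, while if $\lambda=\mu$ the weight piece cancels and $g(\mathbf m)-g(\mathbf m')>0$ by construction. The main obstacle is the linear realisation of the monomial order on $M$ in the second step; the rest is the same ``choose $C$ large'' bookkeeping as in Caldero's Lemma~3.2 of \cite{C1}, adapted to general $\Psi$-weighted lex orders and to the hypothesis of no exceptional characters.
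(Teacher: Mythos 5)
Your proof is correct and follows essentially the same three-part structure as the paper's: a weight form $\ell$ vanishing on characters, nonnegative on $\Lambda^+$, and positive on positive roots (your (a)--(c) are exactly the properties the paper needs from its $e_1$, including the implicit requirement that $e_1$ kills characters, which the paper uses but does not spell out in the definition of $e_1$); a form refining the monomial order on a finite set; and a large constant to make the weight part dominate. The one genuine difference is how you produce the monomial-part refinement $g$. You appeal to Robbiano's classification of monomial orders by real weight matrices and linearize ``$>$'' on $M$ by a generic perturbation, whereas the paper exploits the explicit two-layer structure of a $\Psi$-weighted lex order and writes $e = A e_1 - B\Psi \pm e_2$, with $B\Psi$ handling the coarse comparison, $\pm e_2$ (obtained from Caldero's Lemma~3.2) handling the lexicographic tiebreaker, and the sign determined by whether the order is a standard or opposite $\Psi$-weighted lex order. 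Your route is slightly more abstract and would apply verbatim to any monomial order, not only the $\Psi$-weighted ones the paper restricts to; the paper's route is more explicit and keeps visible the role of $\Psi$. Both yield a valid linear form, and your verifications (positivity on generators via the no-exceptional-characters hypothesis, $\mathbb N$-valuedness by linearity over a generating set, and the separation on $M$ by the choice of $C$) are sound.
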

\begin{proof}
We consider only the case where ``$>$'' is a $\Psi$-weighted lexicographic or $\Psi$-weighted opposite lexicographic order,
the proof for the right lexicographic orders is completely analogous.
Let $e_1:\Lambda_{\mathbb R}\rightarrow \mathbb R$ be a linear function that takes positive integral values at all positive roots and
non-negative integral values on $\Lambda^+$, such a function always exists. We extend the map $\Psi$ linearly to all of $\mathbb R^N$.

Denote by $M'$ the finite set $\{\mathbf m\mid \exists\la\in\Lambda^+: (\la,\mathbf m)\in M\}$.
By \cite{C1}, Lemma~3.2, there exists a linear map $  e_2:\mathbb N^N\rightarrow \mathbb N$ such that
$  e_2(\mathbf m)>  e_2(\mathbf m')$ whenever $\mathbf m>_{lex} \mathbf m'$.
We claim that we can find
integers $A,B\in \mathbb N$ such that the function $e:\Lambda_{\mathbb R}\times \mathbb R^N \rightarrow  \mathbb R$
defined by
$$\Small
e\,:\,(\la,\mathbf m)\mapsto
\left\{
\begin{array}{rl}
A  e_1(\la)-B  \Psi(\mathbf m)-  e_{2}(\mathbf m)& \hbox{if ``$>$" is a $\Psi$-weighted lexicographic order},\\
A  e_1(\la )-B \Psi(\mathbf m)+  e_{2}(\mathbf m)&  \hbox{if ``$>$" is a $\Psi$-weighted opposite lexicographic order},\\
\end{array}
\right.
$$
has the desired properties above.  For any choice of
$A,B\in\mathbb N_{> 0} $, we have $e(\Lambda\times \mathbb Z^N)\subseteq \mathbb Z$.
Fix a birational sequence $(\zeta^1,\mathbf c^1),\ldots,(\zeta^d,\mathbf c^d)$  of $\Gamma$, fix some $B\in \mathbb N_{> 0} $
and set
$$
A_0=\max\{  e_{2}(\mathbf c^1)+   B\Psi(\mathbf c^1) ,\ldots,  e_{2}(\mathbf c^d)+  B\Psi(\mathbf c^d)  \}+1.
$$
If $(\la,\mathbf m)=\sum_{j=1}^d b_j(\zeta^j,\mathbf c^j)\in \Gamma$, then
$e(\la,\mathbf m)=\sum_{j=1}^d b_j e(\zeta^j,\mathbf c^j)$.
If $ e_{1}(\zeta^j)=0$, then $\zeta^j$ is a character of $G$ and hence $\mathbf c^j=0$.
It follows that $e((\zeta^j,\mathbf c^j))=0$ for any choice of $B$ and  $A$ in this case.
Next suppose $ e_1(\zeta^l)>0$ (and hence  $ e_1(\zeta^l)\ge 1$). If $A\ge A_0$, then
$$
e(\zeta^j,\mathbf c^j)\ge A  e_1(\zeta^j )-B\Psi(\mathbf c^j)-  e_{2}(\mathbf c^j)\ge A_0 - B \Psi(\mathbf c^j)-  e_{2}(\mathbf c^j)\ge 1.
$$
It follows that $e(\Gamma)\subseteq\mathbb N$, and $e(\la,\mathbf m)>0$ if $(\la,\mathbf m)\in \Gamma$ is such that $\la$ is
not a character of $G$.

It remains to show that we can choose $B\in \mathbb N_{> 0} $ and $A\ge A_0$ such that
the inequalities hold.
If $\la=\mu$ and $\Psi(\mathbf m)=\Psi(\mathbf m')$, then $(\la,\mathbf m)>_{alg} (\mu,\mathbf m')$ implies
$\mathbf m<_{lex} \mathbf m'$ in the lexicographic case and $\mathbf m>_{lex} \mathbf m'$ in the opposite lexicographic case.
Note that $e((\la,\mathbf m)-(\la,\mathbf m'))=C e_{2}(\mathbf m- \mathbf m')$,
where $C=\pm 1$, depending on the fixed order. If we are in the lexicographic case,
then $C=-1$, $\mathbf m<_{lex} \mathbf m'$ and hence $e((\la,\mathbf m)-(\la,\mathbf m'))=- e_2(\mathbf m-\mathbf m')>0$.
If we are in the opposite lexicographic case, then $C=1$, $\mathbf m>_{lex} \mathbf m'$ and hence
$e((\la,\mathbf m)-(\la,\mathbf m'))=\bar e_2(\mathbf m-\mathbf m')>0$.

If $(\la,\mathbf m)>_{alg} (\mu,\mathbf m')$, $\la=\mu$ and $\Psi(\mathbf m)\not =\Psi(\mathbf m')$,
then $\Psi(\mathbf m)<\Psi(\mathbf m')$, so $-B(\Psi(\mathbf m)-\Psi(\mathbf m'))>0$.
Since we consider only a finite
number of cases, we can choose $B$ large enough such that in these cases $e((\la,\mathbf m)-(\la,\mathbf m'))=
-B(\Psi(\mathbf m)-\Psi(\mathbf m'))\pm   e_2(\mathbf m-\mathbf m')>0$.

If $\la\not =\mu$, then $(\la,\mathbf m)>_{alg} (\mu,\mathbf m')$ implies $\la \succ_{wt} \mu$ and hence $\la-\mu$ is a sum of positive roots.
It follows that $ e_1(\la-\mu)>0$. Since we consider only a finite number of cases, we can choose $A\ge A_0$ large enough such that in all these
cases $e((\la,\mathbf m)-(\mu,\mathbf m'))=A  e_1(\la-\mu)-B  \Psi(\mathbf m-\mathbf m')\pm e_{2}(\mathbf m-\mathbf m')>0$.
\end{proof}

\subsection{Degenerations in geometric terms}
In geometric terms, Proposition~\ref{second} can be reformulated as follows: The affine $G$-algebra $R$
corresponds to the affine $G$-scheme $Y= \hbox{Spec}(R)$, and $gr R$ corresponds to an affine
$T\times \mathbb T$-scheme denoted by $Y_0$. Proposition~\ref{second} implies:

\begin{coro}\it
There exists a family of affine $T$-schemes
$\rho : \mathcal Y\rightarrow \mathbb A^1$ such that
\begin{enumerate}
\item  $\rho$ is flat,
\item $\rho$ is trivial with fiber $Y$ over the complement of $0$ in $\mathbb A^1$,
\item and
the fiber of $\rho$ at $0$ is isomorphic to $Y_0$.
\end{enumerate}
\end{coro}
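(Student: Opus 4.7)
The plan is to translate Proposition~\ref{second} directly into geometry by setting $\mathcal{Y}:=\mathrm{Spec}(\mathfrak R)$ and using the $T$-invariant element $t\in\mathfrak R_1$ to define the morphism $\rho$. Concretely, the inclusion $\mathbb C[t]\hookrightarrow \mathfrak R$ induces a morphism $\rho:\mathcal Y\rightarrow \mathbb A^1=\mathrm{Spec}(\mathbb C[t])$. Because $t$ is $T$-invariant, the $T$-action on $\mathfrak R$ descends to a $T$-action on $\mathcal Y$ for which $\rho$ is $T$-equivariant (with $T$ acting trivially on $\mathbb A^1$). The $\mathbb N$-grading on $\mathfrak R$ gives an additional $\mathbb G_m$-action on $\mathcal Y$, under which $\rho$ is equivariant for the standard $\mathbb G_m$-action on $\mathbb A^1$; this will give the triviality of $\rho$ over $\mathbb A^1\setminus\{0\}$ essentially for free.

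For flatness (item (1)), I would invoke Proposition~\ref{second}(i): since $t$ is a nonzero-divisor in $\mathfrak R$, the $\mathbb C[t]$-module $\mathfrak R$ is torsion-free, and over the principal ideal domain $\mathbb C[t]$ torsion-freeness is equivalent to flatness. Hence $\rho$ is flat. For item (3), the fiber over $0\in\mathbb A^1$ has coordinate ring $\mathfrak R/t\mathfrak R\simeq gr\,R$ as $T$-algebras by Proposition~\ref{second}(iii), and $gr\,R$ is precisely the coordinate ring of $Y_0$ by the definition of $Y_0$ at the start of this subsection. So the special fiber is $T$-equivariantly isomorphic to $Y_0$.

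For item (2), I would localize at $t$. By Proposition~\ref{second}(ii), there is a $\mathbb C[t,t^{-1}]$-$T$-algebra isomorphism $\mathfrak R[t^{-1}]\simeq R[t,t^{-1}]=R\otimes_{\mathbb C}\mathbb C[t,t^{-1}]$. Taking $\mathrm{Spec}$, the restriction $\rho^{-1}(\mathbb A^1\setminus\{0\})\rightarrow \mathbb A^1\setminus\{0\}$ is $T$-equivariantly isomorphic to the projection $Y\times (\mathbb A^1\setminus\{0\})\rightarrow \mathbb A^1\setminus\{0\}$, which is exactly the triviality of the family over the complement of $0$.

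There is no real obstacle here; the content is entirely contained in Proposition~\ref{second}, which in turn rests on the multiplicative property of Lemma~\ref{filtR} together with the reduction of a $\Gamma$-filtration to an $\mathbb N$-filtration supplied by the linear form constructed in Lemma~\ref{straight}. The only point where one should exercise a little care is the verification of $T$-equivariance: one must track that the $T$-algebra structure of $\mathfrak R$ (coming, as in Proposition~\ref{first}, from the $T$-action on $R^U\otimes gr A$ via the standard action on $R^U$ and the twisted right action on $gr A$) matches the $T$-action on $R$ after inverting $t$ and reduces to the $T$-action on $gr R$ modulo $t$. Both compatibilities are built into Proposition~\ref{second} and hence transfer to $\mathcal Y$ without additional work.
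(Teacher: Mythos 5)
Your proposal is correct and is essentially the paper's (implicit) argument: the paper gives no separate proof, stating only that Proposition~\ref{second} implies the corollary, and your unpacking — $\mathcal Y=\mathrm{Spec}(\mathfrak R)$, $\rho$ induced by $\mathbb C[t]\hookrightarrow\mathfrak R$, with (1), (2), (3) read off from (i), (ii), (iii) of Proposition~\ref{second} — is exactly the intended translation into geometry.
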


We say that $Y$ degenerates to $Y_0$, and $Y_0$ is a limit of $Y$.

\subsection{Spherical varieties}
Next we consider the special case that $Y$ is an affine spherical $G$-variety $Y = \hbox{Spec}(R)$,
i.e., $Y$ is a normal affine $G$-variety and contains a dense $B$-orbit. Note that
the multiplicities for the isotypic decomposition are smaller than or equal to one. One can associate
to $Y$ a rational polyhedral convex cone $\hbox{Cone}(Y)\subseteq \Lambda\otimes {\mathbb R}$,
called the {\it weight cone} of $Y$.  The cone is uniquely determined by $Y$, and
$$
R\simeq \bigoplus_{\la\in \Lambda^+\cap \hbox{\tiny Cone}(Y)} V(\la)
$$
as $G$-module. By assumption, $\Gamma$ is finitely generated and saturated, denote by ${\cal C}_{(S,>)}\subset \Lambda_{\mathbb R}\times
\mathbb R^N$
the polyhedral cone spanned by $\Gamma$.
Now Propositions~\ref{first} and \ref{second} imply in this special case:
\begin{prop}\it
Let $Y$ be an affine spherical $G$-variety with weight cone $\hbox{Cone}(Y)$.
Then $Y$ degenerates to the affine toric $T \times \mathbb T$-variety $Y_0$
such that
$$
\hbox{\rm Cone}(Y_0) = (\hbox{\rm Cone}(Y) \times \mathbb R^N) \cap {\cal C}_{(S,>)}.
$$
If $S=(\beta_1,\ldots,\beta_N)$, then
the degeneration is $T$-equivariant, where $T$ acts on $Y_0$ via the homomorphism
$T \hookrightarrow T \times \mathbb T$, $t \mapsto (t^{-1},(\beta_1(t))^{-1}, \ldots, (\beta_N(t))^{-1})$.
\end{prop}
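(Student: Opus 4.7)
The plan is to combine Proposition~\ref{second}, which already furnishes a flat degeneration $\rho:\mathcal Y\to\mathbb A^1$ with generic fiber $Y$ and special fiber $Y_0=\mathrm{Spec}(gr\,R)$, with Proposition~\ref{first}, which realizes $gr\,R\simeq(R^U\otimes gr\,A)^T$. The remaining work is to simplify this description under sphericity, to read off the cone of $Y_0$, and to verify the compatibility of the $T$-action with the prescribed embedding into $T\times\mathbb T$.

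First I would use sphericity to compute $R^U$ explicitly. Since the isotypic multiplicities of $R$ are all at most one and $V(\mu)^*$ occurs precisely for $\mu\in\Lambda^+\cap\mathrm{Cone}(Y)$, we have $R^U=\bigoplus_{\mu\in\Lambda^+\cap\mathrm{Cone}(Y)}\mathbb C\,f_\mu$ with each $f_\mu$ a fixed highest weight vector of $T$-weight $\mu^*$. Combining this with Corollary~\ref{grAsemi} ($gr\,A\simeq\mathbb C[\Gamma]$) and with the weight description of the twisted right action in Proposition~\ref{first} (under which every basis vector $\xi_{\lambda,\mathbf p}$ carries weight $-\lambda^*$), the $T$-invariance condition on $f_\mu\otimes\xi_{\lambda,\mathbf p}$ collapses to $\mu=\lambda$. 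This yields
\[
gr\,R\ \simeq\ (R^U\otimes gr\,A)^T\ \simeq\ \mathbb C\bigl[\,\Gamma\cap(\mathrm{Cone}(Y)\times\mathbb N^N)\,\bigr].
\]

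Next I would identify this intersected monoid as the lattice points of the claimed cone. Since $(S,>)$ is an ASM, $\Gamma$ is the set of integral points of the rational polyhedral cone $\mathcal C_{(S,>)}$, and $\mathrm{Cone}(Y)$ is rational polyhedral by sphericity; hence $\Gamma\cap(\mathrm{Cone}(Y)\times\mathbb N^N)$ is precisely the set of lattice points in $(\mathrm{Cone}(Y)\times\mathbb R^N)\cap\mathcal C_{(S,>)}$, and is a finitely generated saturated affine monoid. By Lemma~\ref{toricvariety} and the standard theory of affine toric varieties (\cite{CLS}), $Y_0$ is therefore a normal affine toric variety for $T\times\mathbb T$ whose associated cone is exactly $(\mathrm{Cone}(Y)\times\mathbb R^N)\cap\mathcal C_{(S,>)}$.

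The remaining point is the $T$-equivariance of $\rho$ together with the explicit embedding $\iota:T\hookrightarrow T\times\mathbb T$, $t\mapsto(t^{-1},\beta_1(t)^{-1},\ldots,\beta_N(t)^{-1})$. Here I would compare the $T$-weight of the class of each basis element $\xi_{\lambda,\mathbf p}$ in $gr\,R$ (inherited from the $G$-action on $R$ restricted to $T$) with the $T$-weight obtained by pulling back the $T\times\mathbb T$-character of the corresponding monoid element in $\mathbb C[\Gamma]$ via $\iota$, using equation~\eqref{TACTION}. Tracing this through the identification supplied by Proposition~\ref{first}, the two weights are forced to agree exactly when $\iota$ is chosen in the stated form, and the fact that $t$ acts by $\iota(t)$ extends from generators to the whole toric variety by naturality. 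I expect this last step, the weight bookkeeping, to be the main obstacle: it requires carefully reconciling the left versus right $T$-actions on $\mathbb C[G/\!/U]$, the $w_0$-twist appearing in Proposition~\ref{first}, and the sign convention built into the $\mathbb T$-action of Definition~\ref{Def:ZS}, so that the embedding comes out with precisely the inverses on each coordinate as asserted.
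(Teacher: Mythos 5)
Your overall plan — feed Proposition~\ref{second} for flatness and Proposition~\ref{first} for the identification $gr\,R\simeq(R^U\otimes gr\,A)^T$, then exploit sphericity to make $R^U$ multiplicity-free and read off the monoid — is precisely what the paper intends when it writes ``Now Propositions~\ref{first} and \ref{second} imply in this special case,'' and your cone computation correctly supplies the details the paper omits. The $T$-invariance argument (forcing the two weights to cancel) and the identification $gr\,R\simeq\mathbb C\bigl[\Gamma\cap(\mathrm{Cone}(Y)\times\mathbb N^N)\bigr]$ as a normal affine semigroup algebra, hence a toric $T\times\mathbb T$-algebra, is the right content and tracks Alexeev--Brion.

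Where the proposal is incomplete is exactly where you flag it: the verification of the explicit embedding $\iota:t\mapsto(t^{-1},\beta_1(t)^{-1},\ldots,\beta_N(t)^{-1})$. You assert that the weight on $\xi_{\la,\bp}$ inherited from the left $G$-action and the pullback via $\iota$ of the $T\times\mathbb T$-character from \eqref{TACTION} ``are forced to agree exactly when $\iota$ is chosen in the stated form,'' but then immediately concede you have not carried out the bookkeeping. This is not a cosmetic step: the weight of $\xi_{\la,\bp}$ under the left $T$-action is $-\la+\sum_i p_i\beta_i$, while \eqref{TACTION} assigns to $(\la,\bp)$ the $T\times\mathbb T$-character $(\la,\bp)$, and naively composing with the stated $\iota$ yields $-\la-\sum_i p_i\beta_i$; one must then account for the $w_0$-twist in Proposition~\ref{first}, the passage from an action on $\mathbb C[\Gamma]$ to an action on $\mathrm{Spec}\,\mathbb C[\Gamma]$, and the sign built into the $\mathbb T$-action of Definition~\ref{Def:ZS} before the formula comes out with the asserted inverses. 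A complete proof should perform this calculation explicitly (or at least pin down precisely which of the three $T$-actions in play — left, right, twisted right — is the one furnishing the $T$-factor of the $T\times\mathbb T$-structure on $gr\,R$), rather than deferring to an expectation. Separately, a careful reader will want you to be explicit about whether $\mathrm{Cone}(Y)$ is being recorded via $\la$ or $\la^*$ when you write $\Gamma\cap(\mathrm{Cone}(Y)\times\mathbb N^N)$, since the invariance condition you derive naturally selects $\la^*\in\mathrm{Cone}(Y)$; this is a convention, but it should be stated so that the formula in the proposition is unambiguous.
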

Returning to an arbitrary $G$-scheme $Y$ with limit $Y_0$, many geometric properties
hold for $Y$ if and only if they hold for $Y_0$, see \cite{AB}, or \cite{Pop} and \cite{Gro}, \S 18.
The main ingredient in the proof is the isomorphism between $gr R$ and $(R^U\otimes gr A)^T$.
\begin{prop}\label{normal}\it
Let $Y$ be an affine $G$-variety, with degeneration $Y_0$ corresponding
to the choice of an $(\mathbb N^N,>,S)$-filtration on $\mathcal{U}(\mathfrak n^-)$, where $(S,>)$ is a sequence with an ASM.
Then $Y$ is normal if and only if $Y_0$ is normal.
\end{prop}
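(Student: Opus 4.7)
The plan is to deduce the equivalence from the isomorphism $\gr R\cong (R^U\otimes \gr A)^T$ of Proposition~\ref{first} together with the flat $\mathbb A^1$-deformation of Proposition~\ref{second}, following the pattern established by Alexeev--Brion in \cite{AB}. A preliminary ingredient is that $\gr A\cong \mathbb C[\Gamma]$ is a normal ring: by the ASM hypothesis the monoid $\Gamma\subset \Lambda\times \mathbb Z^N$ is finitely generated and saturated, so $\mathbb C[\Gamma]$ is the coordinate ring of a normal affine toric variety in the sense of \cite{CLS}.

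For the direction ``$Y$ normal implies $Y_0$ normal'' I would proceed in three steps. First, by a theorem of Popov--Vinberg (see \cite{VP}), an affine $G$-variety for a connected reductive group $G$ is normal if and only if its $U$-invariant subring $R^U$ is normal; hence normality of $R$ yields normality of $R^U$. Next, since $R^U$ and $\mathbb C[\Gamma]$ are finitely generated integral domains over the algebraically closed field $\mathbb C$ of characteristic zero, both are geometrically normal, and therefore the tensor product $R^U\otimes_{\mathbb C}\gr A$ is again a normal ring. Finally, the subring of invariants of a normal ring under the action of the linearly reductive torus $T$ is again normal, so $\gr R\cong (R^U\otimes\gr A)^T$ is normal and thus $Y_0$ is normal.

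For the converse my plan is to exploit the flat family $\rho:\mathcal Y\to\mathbb A^1$ from Proposition~\ref{second}: the fiber over $0$ is $Y_0$ and the fibers over $\mathbb A^1-\{0\}$ are all isomorphic to $Y$. Since normality is an open condition in a flat family of finite type over a base in characteristic zero, if $Y_0$ is normal then the set of $t\in\mathbb A^1$ for which $\rho^{-1}(t)$ is normal is open and contains $0$, hence contains some $t_0\neq 0$; the fiber over $t_0$ being isomorphic to $Y$, one concludes that $Y$ is normal.

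The most delicate point is checking the normality-preservation properties in the first direction, namely geometric normality of $R^U$ and $\mathbb C[\Gamma]$ and stability of normality under their tensor product and under the action of $T$. All of these are standard over $\mathbb C$, but they are precisely where the specific features of the ASM hypothesis (which guarantees the saturation of $\Gamma$, hence the normality of $\mathbb C[\Gamma]$) and of working in characteristic zero are genuinely used. Once these ingredients are in place, the argument becomes a transcription of the corresponding step in \cite{AB}.
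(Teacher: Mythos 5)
Your first direction ($Y$ normal $\Rightarrow Y_0$ normal) is correct and matches what the cited references do: Popov's criterion ($R$ normal iff $R^U$ normal), normality of $\gr A\cong\mathbb C[\Gamma]$ from the ASM hypothesis, stability of normality under $\otimes_{\mathbb C}$ in characteristic zero, and stability under invariants by a linearly reductive group all combine with Proposition~\ref{first} exactly as you describe.

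The converse, however, has a genuine gap. The EGA/Stacks openness result for flat morphisms of finite presentation (EGA IV 12.1.6) gives that the locus $W=\{y\in\mathcal Y:\ \mathcal Y_{\rho(y)} \text{ geometrically normal at } y\}$ is open in the \emph{total space} $\mathcal Y$; it does \emph{not} give that $\{t\in\mathbb A^1:\ \rho^{-1}(t)\text{ normal}\}$ is open in the base. For a non-proper (here, affine) family the image of the closed set $\mathcal Y\setminus W$ under $\rho$ need not be closed, so the locus on the base is in general only constructible. (The openness-on-the-base statement does hold for \emph{proper} flat families, cf.\ Stacks 0C3M, but $\rho$ is not proper.) To salvage your argument you must invoke the extra structure of the Rees family: $\mathfrak R$ is $\mathbb N$-graded with $t\in\mathfrak R_1$, so $\rho$ is $\mathbb C^*$-equivariant and $\mathcal Y\setminus W$ is a closed $\mathbb C^*$-invariant subset of $\operatorname{Spec}\mathfrak R$. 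Any non-empty such subset, corresponding to a proper graded ideal $I$, satisfies $I_0\neq\mathfrak R_0$, hence meets $V(\mathfrak R_+)\subseteq V(t)=\mathcal Y_0$; since $\mathcal Y_0\subseteq W$ this forces $\mathcal Y\setminus W=\emptyset$. Without this contracting-$\mathbb C^*$ argument the conclusion does not follow from openness alone.

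Note also that the paper is pointing to a different, cleaner route for the converse, again through the isomorphism of Proposition~\ref{first}: the $\mathbb T$-invariant subring of $\gr R\cong(R^U\otimes\gr A)^T$ is the $\mathbf p=0$ part $\bigoplus_{\lambda}R^U_{\lambda^*}\otimes\xi_{\lambda,0}$, which as a ring is isomorphic to $R^U$. If $\gr R$ is normal then so is $(\gr R)^{\mathbb T}\cong R^U$ (torus invariants of a normal ring are normal), and then $R$ is normal by the other direction of Popov's criterion. This bypasses the flat-family openness issue entirely and is presumably what the cited references (\cite{AB}, \cite{Pop}, \cite{Gro}) do.
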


\subsection{Degeneration of polarized projective $G$-varieties}
We follow again the approach of Alexeev and Brion \cite{AB}.
Let $(Y,\cal L)$ be a  polarized projective $G$-variety, i.e. $Y$ is a normal projective $G$-variety together with an
ample $G$-linearized  invertible sheaf $\cal L$.

The sheaf ${\cal L}^n := {\cal L}^{\otimes n}$ is also $G$-linearized, so the space
of global sections $H^0(Y,{\cal L}^n)$ is a finite-dimensional rational $G$-module.
Consider the associated graded algebra
$$
R(Y,{\cal L}):= \bigoplus_{n\in\mathbb N} H^0(Y,{\cal L}^n),
$$
this is a finitely generated, integrally closed domain. We have $Y = \hbox{Proj\,} R(Y,{\cal L})$ and
${\cal L}^n={\cal O}_Y(n)$.
Note that ${\cal L}$ is globally generated whenever $Y$ is a spherical variety.

We endow the algebra $R(Y,{\cal L})$ with a filtration as in the section before.
If $Y$ is projectively normal, then we can view $R(Y,{\cal L})$ as the coordinate ring of the affine
cone over the embedded variety $Y\hookrightarrow \mathbb P(V)$. If $Y$ is projectively normal and a spherical variety,
then the affine cone over $Y$ is an affine spherical variety for the group $\mathbb C^*\times G$.

The associated algebra $gr R(Y,{\cal L})$ is still finitely generated, it is an
integrally closed domain, with an action of $\mathbb C^*\times T \times \mathbb T$
such that the $\mathbb C^*$-action defines a positive grading.
The projective variety $Y_0 :=\hbox{Proj\,} gr R(X,\cal L)$ is again
a projective $T\times\mathbb  T$-variety equipped with $T\times\mathbb  T$-linearized sheaves
${\cal L}_0^{(n)}={\cal O}_{Y_0}(n D)$ for all integers $n$, where $D$ refers to a $\mathbb Q$-Weil divisor.
Further, $D$ is $\mathbb Q$-Cartier and ample, i.e., the sheaf ${\cal L}_0^{(m)}$
 is invertible and ample for any sufficiently divisible integer $m > 0$.
In particular, every sheaf ${\cal L}_0^{(n)}$ is divisorial, i.e., it is the sheaf of sections of an integral Weil divisor.

We just quote \cite{AB}, the proof is the same:
\begin{thm}\it
Let $(Y,\cal L)$ be a  polarized $G$-variety and let
$(S,>)$ be a sequence with an ASM. Consider the
induced $(\mathbb N^N,>,S)$-{\it filtration} on $R(Y,{\cal L})$.
Then there exists a family of $T$-varieties $\pi : {\cal Y}\rightarrow \mathbb A^1$, where ${\cal Y}$ is a
normal variety, together with divisorial sheaves ${\cal O}_{\cal Y}(n)$ ($n\in\mathbb  Z$), such that
\begin{itemize}
\item[{\it i)}] $\pi$ is projective and flat.
\item[{\it ii)}] $\pi$ is trivial with fiber $Y$ over the complement of $0$ in $\mathbb A^1$, and ${\cal O}_{\cal Y}(n)\vert_Y \simeq {\cal L}^n$
for all $n$.
\item[{\it iii)}]  The fiber of $\pi$ at $0$ is isomorphic to $Y_0$, and ${\cal O}_{\cal Y}(n)\vert_{Y_0} \simeq {\cal L}_0^{(n)}$ for all $n$.
\end{itemize}
In addition, if $Y$ is spherical, then $Y_0$ is a toric variety for the torus $T\times\mathbb T$.
\end{thm}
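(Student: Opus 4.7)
The plan is to imitate the argument of Alexeev--Brion \cite{AB}, applying our setup where the dual canonical basis is replaced by the dual essential basis $\mathbb B_\Gamma$. The starting point is the graded ring $R=R(Y,\mathcal L)=\bigoplus_{n\ge 0}H^0(Y,\mathcal L^n)$, which is a finitely generated, integrally closed $G$-algebra since $Y$ is normal and $\mathcal L$ is ample. Each homogeneous component $R_n$ decomposes isotypically as in \eqref{Coordringdecomp}, and inside each $R_n$ we use the dual essential basis on the $V(\lambda)^*$-factor (Lemma~\ref{sc1}) to define a $\Gamma$-filtration refining \eqref{weakfiltration}, exactly as in \eqref{deg1}. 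Lemma~\ref{filtR} then guarantees that this filtration is multiplicative, and compatible with the polarization grading since the $G$-isotypic decomposition respects the grading of $R$.

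Next, I would promote this $\Gamma$-filtration to a $\mathbb Z$-filtration. Since $R$ is finitely generated, only finitely many monoid elements of $\Gamma$ are needed to index a set of algebra generators. Applying Lemma~\ref{straight} to this finite set, I obtain a linear form $e:\Lambda_{\mathbb R}\times\mathbb R^N\to\mathbb R$ whose restriction to $\Gamma$ takes values in $\mathbb N$, is strictly positive on all non-character elements, and is strictly monotone with respect to $>_{alg}$ on the chosen generating set. Setting $R_{\le k}=\sum_{e(\la,\bp)\le k}R^U_{\la^*}\otimes\xi_{\la,\bp}$, I get an increasing $\mathbb N$-filtration of $R$ by $G$-submodules, compatible with both multiplication and the polarization grading, whose associated graded is isomorphic (as bigraded $G$-algebra) to $gr\,R$ of section~\ref{algfilt}. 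This is the usual Rees construction: form the bigraded algebra
\[
\mathfrak R=\bigoplus_{k\in\mathbb N}R_{\le k}\,t^{k}\subset R[t],
\]
which is finitely generated over $\mathbb C$, flat over $\mathbb C[t]$ because $t$ is a nonzero-divisor, satisfies $\mathfrak R[t^{-1}]\simeq R[t,t^{-1}]$, and has $\mathfrak R/t\mathfrak R\simeq gr\,R$. This is precisely the polarized analogue of Proposition~\ref{second}.

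Now I would take relative Proj with respect to the polarization grading. Set $\mathcal Y=\mathrm{Proj}_{\mathbb C[t]}(\mathfrak R)$ and $\pi:\mathcal Y\to\mathbb A^1$ the structural morphism, together with the twisting sheaves $\mathcal O_{\mathcal Y}(n)$ built from the $\mathbb N$-graded $\mathfrak R$. Properties (i)-(iii) follow from standard features of the Rees construction: projectivity and flatness of $\pi$ come from the finite generation and $\mathbb C[t]$-flatness of $\mathfrak R$ (Proposition~\ref{second}\,(i)); triviality over $\mathbb A^1\setminus\{0\}$ with fiber $Y$ comes from $\mathfrak R[t^{-1}]\simeq R[t,t^{-1}]$ (Proposition~\ref{second}\,(ii)); and the identification of the special fiber with $Y_0=\mathrm{Proj}(gr\,R)$, together with the matching of $\mathcal O_{\mathcal Y}(n)|_{Y_0}\simeq\mathcal L_0^{(n)}$, follows from $\mathfrak R/t\mathfrak R\simeq gr\,R$ (Proposition~\ref{second}\,(iii)). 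The $T$-equivariance comes automatically since the whole construction is $T$-equivariant.

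For normality of $\mathcal Y$, I would argue as follows: $R$ is a normal domain by hypothesis, and $gr\,R$ is normal by Proposition~\ref{first} (combined with the ASM hypothesis which makes $\mathbb C[\Gamma]$ normal and hence $(R^U\otimes gr\,A)^T$ normal). Hence both the generic fiber $Y$ and the special fiber $Y_0$ of $\pi$ are normal, and $\mathfrak R$ itself, as the Rees ring of a normal algebra by a multiplicative filtration with normal associated graded, is normal; this standard fact (EGA IV, or the treatment in \cite{AB}) transports to normality of $\mathcal Y$. Finally, when $Y$ is spherical, so is $Y_0$ as affine cone, $R^U$ becomes multiplicity-free, and Corollary~\ref{grAsemi} together with Proposition~\ref{first} identifies $gr\,R$ with a sub-semigroup-algebra of $\mathbb C[\Gamma]\otimes\mathbb C[\hbox{Cone}(Y)]$; the ASM hypothesis then makes $Y_0$ a normal toric $T\times\mathbb T$-variety. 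The main technical obstacle I expect is verifying the normality of $\mathcal Y$ itself (rather than just fiberwise), because flatness together with fiber-normality is not automatically enough; this is where one really needs to invoke the careful analysis in \cite{AB} of the Rees algebra associated to an ASM-filtration, and to check that the linear form $e$ from Lemma~\ref{straight} produces a filtration whose associated graded agrees with the original $\Gamma$-graded algebra up to the induced $\mathbb T$-action.
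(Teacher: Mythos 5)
Your proposal follows exactly the route the paper takes: the paper's ``proof'' of this theorem is literally the remark ``We just quote \cite{AB}, the proof is the same,'' and what you write out is a faithful reconstruction of the Alexeev--Brion argument transported to the essential-basis setting --- the $\Gamma$-filtration on $R(Y,\mathcal L)$ via Lemma~\ref{sc1} and Lemma~\ref{filtR}, linearization by the form $e$ from Lemma~\ref{straight}, the Rees algebra, relative $\mathrm{Proj}$ over $\mathbb C[t]$, and the toric conclusion from Proposition~\ref{first} and Corollary~\ref{grAsemi} in the spherical case. So you and the paper take essentially the same approach; if anything, you supply more detail than the paper does, and you correctly identify that the only genuinely nontrivial point is the normality of $\mathcal Y$ (via normality of the Rees ring), which in both your account and the paper's is ultimately delegated to \cite{AB}.
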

\section{Moment polytopes and Newton-Okounkov bodies}\label{polytopesection}
\subsection{Moment polytopes}
The results and definitions in this section on moment polytopes can be found in \cite{AB}, the only
difference being that we consider not only the string cone case.
Let $(Y,\cal L)$ be a  polarized spherical $G$-variety and let
$(S,>)$ be a sequence with an ASM,
so $\Gamma$ is assumed to be finitely generated and saturated.
We recall the definition of a moment polytope of the polarized $G$-variety $(Y,\cal L)$.
Note that for a dominant weight $\lambda$, the isotypical component $H^0(Y ,{\cal L}^{n})_{(\lambda)} \not= 0$
if and only if the space of $U$ invariants of weight $\lambda$ is not trivial: $H^0(Y,{\cal L}^n)^U_\lambda \not=0$.
Further, the algebra $R(Y,{\cal L})^U$ is finitely generated and $\Lambda^+\times\mathbb N$-graded;
let $(f_i)_{i=1,\ldots,r}$ be homogeneous
generators and $(\lambda_i, n_i)$ their weights and degrees.

\begin{defn}
The convex hull  of the points $\frac{\lambda_i}{n_i}$, $i=1,\ldots,r$, in $\Lambda_{\mathbb R}$
is called the {\it moment polytope $P(Y,\cal L)$} of the polarized $G$-variety $(Y,\cal L)$.
\end{defn}
Another way to view the moment polytope: the points $\frac{\lambda}{n}\in\Lambda_{\mathbb R}$
such that $\lambda\in\Lambda^+$, $n\in\mathbb N_{>0}$, and the isotypical component $H^0(Y,{\cal L}^n)_{(\lambda)}$
is nonzero, are exactly the rational points of the rational convex polytope $P(Y,{\cal L})\subset \Lambda_{\mathbb R}$.
Further,  $P(Y,{\cal L}^m) = mP(Y,{\cal L})$ for any positive integer $m$.

By positive homogeneity of the moment polytope, this definition extends to
$\mathbb Q$-polarized varieties, in particular, to any limit $(Y_0,{\cal L}_0)$ of $(Y,{\cal L})$.
We denote by $P(\Gamma,Y,{\cal L})$ the moment polytope of that limit. It is a rational convex polytope in
$\Lambda_{\mathbb R}\times \mathbb R^N$, related to the moment polytope of $(Y,{\cal L})$ by the following theorem:

\begin{thm}\label{polytopethm}\it
The projection $p : \Lambda_{\mathbb R}\times \mathbb R^N \rightarrow \Lambda_{\mathbb R}$ onto the first factor
restricts to a surjective map
$$
p : P(\Gamma,Y,{\cal L}) \rightarrow P(Y,{\cal L}),
$$
with fiber over any $\lambda\in\Lambda^+_{\mathbb R}$
being the essential polytope $P(\lambda)$ (see \ref{esspoly}). In particular, for $\lambda\in\Lambda^+$, let
$P_\lambda$ be the stabilizer  of the line $[v_\lambda]\in \mathbb P(V(\lambda))$.
The limit of the flag variety $G/P_\lambda$ is a toric variety under $\mathbb T$, and its
moment polytope is the essential polytope $P(\lambda)$.
\end{thm}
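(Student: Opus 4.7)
My plan is to describe $P(Y_0,\mathcal L_0)=P(\Gamma,Y,\mathcal L)$ explicitly by unwinding the $T\times\mathbb T$-weight structure on $gr\,R(Y,\mathcal L)$, and then read off both surjectivity of $p$ and the fiber description simultaneously.

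First I will apply Proposition~\ref{first} (in its polarized incarnation from the immediately preceding theorem) together with Corollary~\ref{grAsemi} and the structure-constant formula \eqref{degopplex}. Since $Y$ is spherical, $R(Y,\mathcal L)$ decomposes with multiplicities $\le 1$, and \eqref{degopplex} shows that the $T\times\mathbb T$-weight $(\lambda,\mathbf m)$ space of the $n$-th graded piece of $gr\,R(Y,\mathcal L)$ is one-dimensional precisely when the $G$-isotypic component of type $\lambda$ in $H^0(Y,\mathcal L^n)$ is nonzero and $(\lambda,\mathbf m)\in\Gamma$, and vanishes otherwise. Consequently, the rational points of $P(Y_0,\mathcal L_0)$ are exactly the $(\lambda,\mathbf m)/n$ with $\lambda/n$ a rational point of $P(Y,\mathcal L)$ and $(\lambda,\mathbf m)\in\Gamma$. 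Finite generation and saturation of $\Gamma$ imply that $\mathcal C_{(S,>)}=\mathbb R_{\ge 0}\Gamma$ is a closed rational polyhedral cone, and passing to closures yields
\[
P(Y_0,\mathcal L_0)\;=\;\bigl(P(Y,\mathcal L)\times\mathbb R^N\bigr)\cap \mathcal C_{(S,>)}.
\]

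From this identification the theorem follows essentially mechanically. Surjectivity of $p$: for every $\lambda\in P(Y,\mathcal L)$, the point $(\lambda,0)$ lies in $\mathcal C_{(S,>)}$, since $0\in es(\mu)$ for every dominant $\mu$ (as $v_\mu$ itself is a highest weight vector), so $(\mu,0)\in\Gamma$ for every $\mu$ in the weight support of $R(Y,\mathcal L)^U$, and such rational $\mu/n$ are dense in $P(Y,\mathcal L)$. The fiber of $p$ over any $\lambda\in P(Y,\mathcal L)$ is then tautologically $p^{-1}(\lambda)\cap \mathcal C_{(S,>)}$, which equals the essential polytope $P(\lambda)$ by the definition in Subsection~\ref{esspoly}. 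For the flag variety $Y=G/P_\lambda$ with canonical polarization $\mathcal L_\lambda$, one has $R(Y,\mathcal L_\lambda)\simeq\bigoplus_{n\ge 0}V(n\lambda)^*$, so the $U$-invariants span a single ray of weights and $P(Y,\mathcal L_\lambda)$ collapses to a single point; hence $P(Y_0,\mathcal L_0)=P(\lambda)$ by the above formula, the residual $T$-action on $Y_0$ factors through a single character, and $Y_0$ becomes a toric variety under $\mathbb T$ alone with moment polytope $P(\lambda)$.

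The main technical obstacle I anticipate is bookkeeping the two $T$-actions on $gr\,R$ present in Proposition~\ref{first}: the standard left $T$-action recording the $G$-isotypic type of $R(Y,\mathcal L)$, and the twisted right $T$-action inherited from the $G/\hskip-3.5pt/U$ tensor factor. I must verify that the $T\times\mathbb T$-weight $(\lambda,\mathbf m)$ used in the definition of $P(\Gamma,Y,\mathcal L)$ matches, on the nose and with the correct sign convention, the pair consisting of the isotypic weight in $R(Y,\mathcal L)$ and the essential multi-exponent of $\mathbf f^{(\mathbf m)}v_\lambda$; the Weyl involution $\lambda\mapsto\lambda^*$ present in the isomorphism~\eqref{Coordringdecomp} has to be tracked carefully. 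Once this dictionary is fixed, the remainder reduces to a direct intersection-of-polyhedra computation relying on the finite generation and saturation of $\Gamma$ afforded by the ASM hypothesis.
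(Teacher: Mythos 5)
Your plan follows the paper's proof exactly: express the rational points of $P(\Gamma,Y,\mathcal L)$ via the $T\times\mathbb T$-weight decomposition of $gr\,R(Y,\mathcal L)$, observe that the fiber condition is membership in $\mathcal C_{(S,>)}$, then specialize to $Y=G/P_\lambda$. The paper simply compresses what you spell out, and your surjectivity argument (via $0\in es(\mu)$) and the passage from rational points to the full polytope by finite generation/saturation are both fine.

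The one place where the bookkeeping you defer is not optional: your intermediate formula $P(Y_0,\mathcal L_0)=\bigl(P(Y,\mathcal L)\times\mathbb R^N\bigr)\cap\mathcal C_{(S,>)}$ is missing a Weyl involution, and if left in, it breaks the flag-variety consequence. Tracking \eqref{Coordringdecomp} together with the weight discussion preceding Proposition~\ref{first}: the $(\lambda,\mathbf p)$-indexed piece $R^U_{\lambda^*}\otimes\xi_{\lambda,\mathbf p}$ of $gr\,R$ has $T$-weight $\lambda^*$ (coming from the $R^U$ factor) and $\mathbb T$-weight $\mathbf p$, so the rational points of $P(\Gamma,Y,\mathcal L)$ are the $(\mu/n,\mathbf m/n)$ with $H^0(Y,\mathcal L^n)_{(\mu)}\neq 0$ \emph{and} $(\mu^*,\mathbf m)\in\mathcal C_{(S,>)}$, as the paper's proof records. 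Hence the fiber of $p$ over $\mu$ is identified with $P(\mu^*)$, not $P(\mu)$. That is precisely what makes the second assertion come out right: $R(G/P_\lambda,\mathcal L_\lambda)=\bigoplus_n V(n\lambda^*)$, so $P(G/P_\lambda,\mathcal L_\lambda)=\{\lambda^*\}$ and the moment polytope of the limit is the fiber over $\lambda^*$, namely $P\bigl((\lambda^*)^*\bigr)=P(\lambda)$. With your unstarred formula you would instead get $P(\lambda^*)$, contradicting your own conclusion. (For what it is worth, the printed theorem statement ``fiber over $\lambda$ being $P(\lambda)$'' has the same slip and should read $P(\lambda^*)$ to match the proof; the ``in particular'' clause is correct as stated.) Finally, your observation that the residual $T$-action on the limit of $G/P_\lambda$ is trivial is correct and is the right way to conclude the degenerate variety is toric under $\mathbb T$ alone.
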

\begin{proof} By definition, the rational points of $P(\Gamma,Y,{\cal L})$ are the pairs
$(\frac{\lambda}{n}, \frac{\mathbf m}{n})$ such that $H^0(Y,{\cal L}^n)_{(\lambda)}\not=0$
and $(\lambda^*, \mathbf m) \in {\cal C}_{(S,>)}$. This implies the first assertion for rational
$\lambda$, and hence for all $\lambda$ since all involved polytopes are rational.
Consider the $G$-variety $Y = G/P_\lambda$  and the $G$-linearized line bundle ${\cal L} = {\cal L}_\lambda$.
Then ${\cal L}$ is ample, and $R(Y,{\cal L}) =\bigoplus_{n=0}^\infty V(n\lambda^*)$. Thus, $Y$ is spherical with
moment polytope being the point $\lambda^*$.
This implies the second assertion.
 \end{proof}
\subsection{Newton-Okounkov bodies} Let us first assume that $\Gamma$ is not necessarily finitely generated, so we just have a birational
sequence $S$ and a $(\mathbb N^N,>,S)$-{\it filtration} together with the monoid $\Gamma$, which we can view
either as the global essential monoid (Corollary~\ref{semigroup}) or as the valuation monoid ${\cal V}(G/\hskip -3.5pt/U)$
(Proposition~\ref{semigleichsemi1}). Recall that the valuation $\nu$ is determined by the valuation $\nu_1$ (see \eqref{valuationdef1} and
Remark~\ref{semigleichsemi1}). The valuation $\nu_1$ depends on the choice of $S$  and ``$>$'', and the birational map
$\pi:Z_S\rightarrow U^-$ (see \eqref{birational1}) provides a birational map $\pi:Z_S\rightarrow G/B$ by identifying $U^-$ with an affine neighborhood
of the identity. Hence we can view $\nu_1$ naturally as a $\mathbb Z^N$-valued valuation on $\mathbb C(G/B)$.

We leave the general case of spherical varieties to the reader and consider only flag varieties.
Let $\lambda$ be a regular dominant weight (otherwise replace $G/B$ by $G/Q$, where $Q$ is a parabolic subgroup such that
the corresponding line bundle is very
ample on $G/Q$) and let $\mathcal L_\la$
be the corresponding very ample line bundle on $G/B$. Let $R_\la$ be the ring $\bigoplus_{n\ge 0} H^0(G/B,\mathcal L_{n\la})$.
Recall that $H^0(G/B,\mathcal L_{n\la})\simeq V(n\la)^*$ as a $G$-representation, we fix for all $n\in\mathbb N$ the
dual vector $\xi_{n\la,\mathbb O}$ to the fixed highest weight vector $v_{n\la}\in V(n\la)$. The  Newton-Okounkov body
associated to $G/B$ depends on the choice of the valuation $\nu_1$, the ample line bundle $\mathcal L_\la$ and the
choice of a non-zero element in $H^0(G/B,\mathcal L_{\la})$, in our case the vector $\xi_{n\la,\mathbb O}$.
The {\it Newton-Okounkov body} $\Delta_{\nu_1}(\la)$ is defined as follows.
(For more details on Newton-Okounkov bodies see for example \cite{KK,K1}). One associates to the graded ring $R_\la$ the monoid
$$
\bigcup_{n>0}\{(n,\nu_1(\frac{s}{\xi_{n\la,\mathbb O}}) \mid s\in H^0(G/B,\mathcal L_{n\la})\}.
$$
In view of Proposition~\ref{semigleichsemi1}, this is nothing but the essential monoid $\Gamma(\la)$ associated to $\la$ (Corollary~\ref{semigroup}).
\begin{defn}
The {\it Newton-Okounkov body} $\Delta_{\nu_1}(\la)$ is the convex closure
$$\text{conv}\overline{(\bigcup_{n\in \mathbb N}\{\frac{\mathbf m}{n}\mid   \mathbf m\in es(n\la)\}}).$$
\end{defn}
It follows immediately:
\begin{prop}\it
Let $S$ be a birational sequence and fix a $\Psi$-weighted lex order. If the
global essential monoid $\Gamma(S,>)$ is finitely generated, then the Newton-Okounkov body
$\Delta_{\nu_1}(\la)$ is equal to the essential polytope $P(\la)$.
\end{prop}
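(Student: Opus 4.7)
The plan is to show that both $P(\la)$ and $\Delta_{\nu_1}(\la)$ are the closed convex hull of the same subset of $\mathbb{Q}^N$, namely
\[
S(\la) := \bigcup_{n\geq 1}\Bigl\{\tfrac{\mathbf m}{n}\,\Bigm|\,\mathbf m\in es(n\la)\Bigr\}.
\]
By definition $\Delta_{\nu_1}(\la)$ is exactly $\overline{\operatorname{conv}(S(\la))}$, so the task reduces to identifying $P(\la)$ with the closed convex hull of $S(\la)$. Since $\Gamma$ is finitely generated, the essential cone $\mathcal C_{(S,>)}=\mathbb R_{\geq 0}\Gamma$ is a rational polyhedral cone, hence $P(\la)=p^{-1}(\la)\cap\mathcal C_{(S,>)}$ is a rational polytope (boundedness follows from the finiteness of $es(n\la)$ in Lemma~\ref{repessentialversusessential}\,\emph{ii)} together with the fact that $\Gamma$ has only finitely many generators).

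The key step is to check that the set of rational points of $P(\la)$ equals $S(\la)$. One inclusion is immediate: if $\mathbf m\in es(n\la)$, then $(n\la,\mathbf m)\in\Gamma\subset\mathcal C_{(S,>)}$, and scaling by $1/n$ gives $(\la,\mathbf m/n)\in\mathcal C_{(S,>)}$, so $\mathbf m/n$ is a rational point of $P(\la)$. For the converse, a rational point $(\la,\mathbf x)\in\mathcal C_{(S,>)}$ can, since $\Gamma$ generates the rational polyhedral cone $\mathcal C_{(S,>)}$ as a convex cone, be written as a non-negative rational combination $\sum_i r_i(\la_i,\mathbf m_i)$ with $(\la_i,\mathbf m_i)\in\Gamma$ and $r_i\in\mathbb Q_{\geq 0}$. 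Choosing $N\in\mathbb N$ with $Nr_i\in\mathbb N$ for all $i$ and using that $\Gamma$ is a monoid yields $(N\la,N\mathbf x)=\sum_i(Nr_i)(\la_i,\mathbf m_i)\in\Gamma$, so $N\mathbf x\in es(N\la)$ and $\mathbf x\in S(\la)$.

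Finally, since $P(\la)$ is a rational polytope, its rational points are dense in it; combining with the previous paragraph one gets $P(\la)=\overline{\operatorname{conv}(S(\la))}=\Delta_{\nu_1}(\la)$. I do not expect any serious obstacle here: Proposition~\ref{semigleichsemi1} has already done the substantial work of identifying the essential monoid with the valuation monoid, so the remaining content is the clearing-of-denominators argument in the middle paragraph. Note that saturation of $\Gamma$ is not needed, since we only compare rational points of the cone (which are automatically in $\mathbb Q_{\geq 0}\Gamma$) rather than lattice points.
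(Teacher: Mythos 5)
Your proof is correct and fills in precisely the argument the paper deems ``immediate'' (the paper offers no proof beyond the remark that the monoid underlying the Newton--Okounkov body is $\Gamma(\lambda)$ by Proposition~\ref{semigleichsemi1}). The core of the argument --- that both sets have the same rational points because a rational point of the finitely generated rational polyhedral cone $\mathcal C_{(S,>)}$ is a nonnegative \emph{rational} combination of generators of $\Gamma$, so after clearing denominators it lands in the monoid $\Gamma$ itself --- is exactly right, and your remark that saturation is not needed is correct.

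One minor imprecision: the parenthetical justification of boundedness (``follows from the finiteness of $es(n\la)$ in Lemma~\ref{repessentialversusessential}~\emph{ii)} together with the fact that $\Gamma$ has only finitely many generators'') is not really a proof. Finiteness of $es(n\lambda)$ controls only lattice points of $\Gamma$ over $n\lambda$, not arbitrary real points of the fiber $p^{-1}(\lambda)\cap\mathcal C_{(S,>)}$, so one cannot conclude boundedness from it directly (absent saturation). The correct reason is that a recession direction $(0,\mathbf v)\in\mathcal C_{(S,>)}$ would have to be a nonnegative combination of generators $(\zeta^j,\mathbf c^j)$ of $\Gamma$ whose first components sum to $0$; since the dominant cone is pointed modulo characters of $G$, and characters $\zeta$ have $es(\zeta)=\{0\}$ (so $\mathbf c^j=0$ there), this forces $\mathbf v=0$. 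That said, boundedness is not actually needed for your equality argument --- density of rational points holds for any rational polyhedron --- so this is a cosmetic issue, not a gap.
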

\begin{exam}\label{kiritchenkoexample}
We assume $G=SL_{n+1}$. Consider the birational sequence:
$$
S=(\alpha_{n,n}, \alpha_{n-1,n}, \alpha_{n-1,n-1},\ldots,\alpha_{1,n}, \ldots,\alpha_{1,1}),
$$
which is of PBW-type and is associated to the following reduced decomposition: $\underline{w}_0=s_n(s_{n-1}s_n)\cdots(s_{1}\cdots s_n)$.
If we identify $U^-$ with the open affine neighborhood of the identity in $G/B$, then the choice of $S$ provides a natural
sequence of affine subvarieties  given by the products of just the terminal $N-1$ respectively terminal $N-2$ etc. root subgroups. This
sequence is compatible with the sequence of translated Schubert varieties (we omit the translation by $w_0$)
used in \cite{Kir2}. If we choose the lexicographic ordering on $\mathbb N^N$, then the associated valuation monoid
is just the one considered in \cite{Kir2}. Note that the valuation monoid or, equivalently, the global essential monoid,
does not change if we replace the lexicographic ordering by the height weighted lexicographic ordering, see Remark~\ref{changepsi}.
Now in \cite{FFL2} we determine the essential monoid for the sequence $S$ above, using the homogeneous lexicographic order. Now in view
of the results of the next section (Example~\ref{leftright}, Proposition~\ref{ordering1}) and the fact that all
relations used in  \cite{FFL2} are homogeneous, one can show that the proofs go through in the same way
if one replaces the homogeneous lexicographic order by the height weighted lexicographic order. So this explains
why one gets in \cite{FFL2} as well as in \cite{Kir2} the FFLV-polytopes as essential polytopes respectively as
Newton-Okounkov bodies.
\end{exam}
\begin{rem} We will remark briefly on birational sequences of subgroups of $U^{-}$, toric degenerations of Schubert varieties and how the induced cones are related to the cones for $U^{-}$ and the flag variety.
\par
Let $w$ be a Weyl group element, $\Phi_w = w^{-1} (\Phi^+) \cap \Phi^-$ and $U_w^-$ be the subgroup of $U^-$ generated by the corresponding one-dimensional root subgroups. Let $S_w$ be a birational sequence for $U_w^-$ (similarly defined using the torus $\mathbb{T}^{\operatorname{dim } U_w^-} \times T$, where $T$ is the fixed torus of $G$). Let $\Psi_w$ be a weight function on 
 $S_w$ and let $<_{w}$ be a $\Psi_w$-weighted lexicographic order.
\par
Similarly to the construction in Sections~\ref{essential1} and \ref{filtandsemi}, one can use 
the Demazure module $V(\lambda)_w\subset V(\lambda)$ associate to $w$ to construct the essential monoid
$\Gamma_w$ and the essential cone $\mathcal{C}_w$. Under the finitely generation and saturation conditions, 
toric degenerations for the Schubert variety (corresponding to $w$)   can be constructed.
\par
Any birational sequence $S_w$ for $U_w^-$ can be extended to a birational sequence $S$ of  $U^{-}$. One can also 
extend the function $\Psi_w$ and the order $<_w$ to provide a $\Psi$-weighted lexicographic order $<$ on $S$, 
 leading to a monoid $\Gamma$ and an essential cone $\mathcal{C}$. These extensions are not unique, 
so any relation between $\Gamma_w$ and $\Gamma$ (resp. $\mathcal{C}_w$ and $\mathcal{C}$) depend on the chosen extensions. 
\par
Some special cases of such extensions have been already studied.  Let ${w} \in W$ be an element in the Weyl group. We fix a reduced expression 
$\underline{w}=s_{i_1}\cdots s_{i_r}$ and choose correspondingly
the birational sequence for $U_w^-$, $\Psi_w$ and $<_w$ as in the \textit{reduced expression case} (Section~\ref{stringconeproof}). 
Denote by $\mathcal{C}_{\underline{w}}$ the corresponding essential cone. Extend now the reduced expression above
to a reduced expression of $w_0$ at the \textit{right end}. It has been shown in \cite{C1} that $\mathcal{C}_{\underline{w}}$ is a face of 
$\mathcal{C}_{\underline{w}_0}$. A similar result has been shown in \cite{F} for $G=SL_{n+1}$ and \textit{triangular} Weyl group elements in the homogeneous 
PBW-case (Section~\ref{homordercase}). 
\end{rem}
\section{Quasi-commutative filtrations in the PBW-type case and ${\mathbb G}^N_a$-actions}\label{Qcf}
Let $\Phi^+=\{\beta_1,\ldots,\beta_N\}$ be an enumeration of the set of positive roots,
so $S=(\beta_1,\ldots,\beta_N)$ is a birational sequence and $\{f_{\beta_1},\ldots,f_{\beta_N}\}$ is a basis for $\mathfrak n^-$.
Fix a  $(\mathbb N^N,>,S)$-{filtration} on $\mathcal{U}(\mathfrak n^-)$.
If a product of root vectors $m=f_{{i_1}}^{\ell_1}\cdots f_{{i_t}}^{\ell_t}$
is not necessarily ordered, then let ${\bf exp}(m)=(k_1,\ldots,k_N)$ be the ordered sequence of the exponents, i.e., $k_1=\sum_{i_j=1}\ell_j$ is the sum of all the exponents of
the various times $f_{\beta_1}$ occurs in the product, $k_2=\sum_{i_j=2}\ell_j$ is the sum of all the exponents of
the various times $f_{\beta_2}$ occurs in the product etc.
The monomial ${\bf f}^{{\bf exp}(m)}$ is  called the {\it ordered monomial} associated to a monomial $m$.
\begin{defn}\label{quasicomdefn}
The  $(\mathbb N^N,>,S)$-{\it filtration} of $\mathcal{U}(\mathfrak n^-)$ is called {\it quasi-commutative} if for all monomials $m$ in elements of $S$
the following rule holds:
\begin{equation}\label{straightening}
m=f_{{i_1}}^{\ell_1}\cdots f_{{i_t}}^{\ell_t}= {\bf f}^{{\bf exp}(m)} + \sum_{\mathbf n<{\bf exp}(m)} a_{\mathbf n} {\bf f}^{{\mathbf n}}.
\end{equation}
\end{defn}
\begin{exam}
Let $S=(\beta_1,\ldots,\beta_N)$ be a birational sequence.
If $\Psi$ satisfies $\Psi(e_i)+\Psi(e_j)>\Psi(e_k)$ whenever $\beta_i+\beta_j=\beta_k$, then the
$(\mathbb N^N,>,S)$-{\it filtration} of $\mathcal{U}(\mathfrak n^-)$ is quasi-commutative.
The function $\Psi$ used in section~\ref{homordercase} has this property as well as the function used in
Example~\ref{quantumex}. The cases considered in \cite{BD} can also be reformulated in this setting,
leading to a semigroup which is not isomorphic to the one in section~\ref{homordercase}.
\end{exam}
Let $V(\la)$ be an irreducible representation with highest weight vector $v_\lambda$.
The definition above implies in the quasi-commutative case:
\begin{equation}\label{dualcalc}
 f_{\beta_i} \mathbf f^{(\mathbf m)}v_\la=
 (m_i+1)\mathbf f^{(\mathbf m+\mathbf e_i)}v_\la +\sum_{\mathbf k< \mathbf m+\mathbf e_i} a_{\mathbf k}\mathbf f^{(\mathbf k)}v_\la,
\end{equation}
and hence $ f_{\beta_i} V(\la)_{\le \mathbf m}\subseteq V(\la)_{\le \mathbf m+\mathbf e_i}$. It follows immediately:
\begin{prop}\it
Suppose the $(\mathbb N^N,>,S)$-{\it filtration}  of $\mathcal{U}(\mathfrak n^-)$ is quasi-commutative.
\begin{itemize}
\item[{\it i)}] The multiplication on $\mathcal{U}(\mathfrak n^-)$ induces on $\mathcal{U}^{gr}(\mathfrak n^-)$ the structure of
a commutative algebra such that the natural map $\mathcal{U}^{gr}(\mathfrak n^-)\rightarrow S(\mathfrak n^-)$
given by $\bar{\bf f}^{\bf n}\mapsto {\bf f}^{\bf n}$ defines an isomorphism of commutative algebras.
\item[{\it ii)}] The action of  $\mathcal{U}(\mathfrak n^-)$ on $V(\la)$ makes $V^{gr}(\la)$ into a cyclic $S(\mathfrak n^-)$-module
generated by the class $\bar v_\la$ of $v_\la$ in $V^{gr}(\la)$, such that the annihilating ideal of $\bar v_\la$ is the monomial ideal
having as $\mathbb C$-basis $\{ \mathbf f^{(\mathbf m)} \mid  {\bf m}\in es(\mathfrak n^-), {\bf m}\not\in es(\la)\}$,
and the vectors $\{\mathbf f^{(\mathbf m)}\bar v_\la\mid {\bf m}\in es(\la)\}$ form a basis for $V^{gr}(\la)$.
\item[{\it iii)}] The element $f_{\beta_i}$ acts homogeneously of degree ${\mathbf e}_i$ on $V^{gr}(\la)$, i.e., if $\mathbf m,\mathbf m+\mathbf e_i\in es(\la)$,
then $f_{\beta_i}\mathbf f^{(\mathbf m)}\bar v_\la=(m_i+1)\mathbf f^{(\mathbf m+\mathbf e_i)}\bar v_\la$ in  $V^{gr}(\la)$.
\end{itemize}
\end{prop}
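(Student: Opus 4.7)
The plan is to leverage the straightening identity \eqref{straightening} and its specialization \eqref{dualcalc} to reduce everything to computations of leading exponent vectors under the fixed $\Psi$-weighted lex order.

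For part \emph{i)}, I would first verify that the filtration is multiplicative. Given ordered monomials $\mathbf f^{\mathbf m}$ and $\mathbf f^{\mathbf m'}$, their concatenated product has exponent vector (in the sense of $\mathbf{exp}$) equal to $\mathbf m+\mathbf m'$, so applying \eqref{straightening} yields
$$\mathbf f^{\mathbf m}\mathbf f^{\mathbf m'} \;=\; \mathbf f^{\mathbf m+\mathbf m'} \;+\; \sum_{\mathbf n<\mathbf m+\mathbf m'} a_{\mathbf n}\mathbf f^{\mathbf n}.$$
This shows $\mathcal U(\mathfrak n^-)_{\le\mathbf m}\cdot\mathcal U(\mathfrak n^-)_{\le\mathbf m'}\subseteq \mathcal U(\mathfrak n^-)_{\le\mathbf m+\mathbf m'}$, with strict analogue, so $\mathcal U^{gr}(\mathfrak n^-)$ inherits an algebra structure for which $\bar{\mathbf f}^{\mathbf m}\cdot\bar{\mathbf f}^{\mathbf m'} = \bar{\mathbf f}^{\mathbf m+\mathbf m'}$. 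Commutativity is then automatic since the right-hand side is symmetric in $\mathbf m,\mathbf m'$. Because we are in the PBW-type case, Example~\ref{monomialseinsa} gives $es(\mathfrak n^-)=\mathbb N^N$, so $\mathcal U^{gr}(\mathfrak n^-)$ has basis $\{\bar{\mathbf f}^{\mathbf n}\mid\mathbf n\in\mathbb N^N\}$; the map $\bar{\mathbf f}^{\mathbf n}\mapsto\mathbf f^{\mathbf n}\in S(\mathfrak n^-)$ is then a bijection on monomial bases that respects the multiplication, hence an algebra isomorphism.

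For part \emph{ii)}, I would first check that \eqref{dualcalc} gives $f_{\beta_i}V(\la)_{\le\mathbf m}\subseteq V(\la)_{\le\mathbf m+\mathbf e_i}$, so by iteration the filtration on $V(\la)$ is an $\mathcal U(\mathfrak n^-)$-module filtration and $V^{gr}(\la)$ becomes a module over $\mathcal U^{gr}(\mathfrak n^-)\cong S(\mathfrak n^-)$. Cyclicity with generator $\bar v_\la$ follows by passing $V(\la)=\mathcal U(\mathfrak n^-)v_\la$ to the graded. By definition, $\mathbf f^{(\mathbf m)}\bar v_\la\neq 0$ precisely when $\mathbf m\in es(\la)$, and these vectors form a basis of $V^{gr}(\la)$. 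To see that the annihilator of $\bar v_\la$ is the monomial ideal with basis $\{\mathbf f^{(\mathbf m)}\mid \mathbf m\notin es(\la)\}$, the key point is to show that the complement $\mathbb N^N\setminus es(\la)$ is upward closed under the coordinatewise order. This is the only nontrivial step: if $\mathbf m\notin es(\la)$, write $\mathbf f^{(\mathbf m)}v_\la=\sum_{\mathbf m'<\mathbf m}c_{\mathbf m'}\mathbf f^{(\mathbf m')}v_\la$; then for any $\mathbf k\in\mathbb N^N$, applying $\mathbf f^{(\mathbf k)}$ and invoking the algebra filtration property established in part \emph{i)} together with \eqref{dualcalc} shows that $\mathbf f^{(\mathbf k)}\mathbf f^{(\mathbf m)}v_\la$, which equals a nonzero scalar multiple of $\mathbf f^{(\mathbf m+\mathbf k)}v_\la$ up to strictly lower terms, must itself lie in $V(\la)_{<\mathbf m+\mathbf k}$, forcing $\mathbf m+\mathbf k\notin es(\la)$.

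For part \emph{iii)}, the identity \eqref{dualcalc} is already in the required form: it explicitly exhibits $(m_i+1)\mathbf f^{(\mathbf m+\mathbf e_i)}v_\la$ as the leading term of $f_{\beta_i}\mathbf f^{(\mathbf m)}v_\la$ with respect to the $(\mathbb N^N,>,S)$-filtration, so passing to classes in $V^{gr}(\la)_{\mathbf m+\mathbf e_i}$ yields the stated formula whenever $\mathbf m,\mathbf m+\mathbf e_i\in es(\la)$. The main (and essentially only) obstacle in the whole argument is the downward-closure step in part \emph{ii)}; everything else reduces to bookkeeping with leading terms and the straightening rule \eqref{straightening}.
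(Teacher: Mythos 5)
Your proof is correct and follows the route the paper intends: the proposition is declared an immediate consequence of the straightening rule \eqref{straightening} and its specialization \eqref{dualcalc}, and you spell out exactly the bookkeeping involved. Two small remarks are in order. First, in part \emph{i)} the multiplicativity of the filtration alone is not quite enough for the graded product to be well defined; one also needs that $\mathbf k<\mathbf m$ implies $\mathbf k+\mathbf m'<\mathbf m+\mathbf m'$, which holds because ``$>$'' is a monomial order (worth saying once, even if obvious). Second, the step you single out as ``the only nontrivial step'' --- upward closure of $\mathbb N^N\setminus es(\la)$ --- is actually not a prerequisite but a consequence: once you observe that the nonzero images $\mathbf f^{(\mathbf m)}\bar v_\la$, $\mathbf m\in es(\la)$, are linearly independent (they sit in distinct graded pieces of $V^{gr}(\la)$), the kernel of the surjection $S(\mathfrak n^-)\rightarrow V^{gr}(\la)$, $u\mapsto u\bar v_\la$, automatically has $\{\mathbf f^{(\mathbf m)}\mid \mathbf m\notin es(\la)\}$ as a basis, and since an annihilator is an ideal, this span must be an ideal; a vector-space span of monomials being an ideal forces upward closure of the index set. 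So your independent verification (which is exactly what the paper records separately as Lemma~\ref{deriving}, needed later for the dual module $V^{gr,*}(\la)$) is correct but not strictly required to prove the present proposition.
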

The action of the commuting nilpotent Lie algebras $\mathbb Cf_{\beta_j}\subset S(\mathfrak n^-)$, $j=1,\ldots,N$,
on $V^{gr}(\la)$ can be integrated into an action of the unipotent group $\mathbb G_a^N$:
\begin{coro}\it
The action of $S(\mathfrak n^-)$ on $V^{gr}(\la)$ induces an action of $\mathbb G_a^N$ on $V^{gr}(\la)$.
\end{coro}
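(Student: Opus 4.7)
The plan is to use the standard fact that an action of an abelian Lie algebra by pairwise commuting locally nilpotent operators on a vector space integrates to an action of the corresponding unipotent additive group. The three ingredients I need to verify are: finite-dimensionality, commutativity, and nilpotency of the action of each $f_{\beta_i}$ on $V^{gr}(\la)$.

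First, observe that $V^{gr}(\la)=\bigoplus_{\mathbf m\in\mathbb N^N} V(\la)_{\mathbf m}$ is finite-dimensional (each subquotient is at most one-dimensional, and since $\dim V(\la)<\infty$ only finitely many $V(\la)_{\mathbf m}$ are nonzero, i.e., only $\mathbf m\in es(\la)$ contribute). By part \textit{i)} of the preceding proposition, $\mathcal{U}^{gr}(\mathfrak n^-)\cong S(\mathfrak n^-)$, so the elements $f_{\beta_1},\ldots,f_{\beta_N}$ act as pairwise commuting linear operators on $V^{gr}(\la)$.

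Next, by part \textit{iii)} of the preceding proposition, $f_{\beta_i}$ maps the component $V(\la)_{\mathbf m}$ into the component $V(\la)_{\mathbf m+\mathbf e_i}$ (possibly to zero). Since $es(\la)\subset\mathbb N^N$ is finite, there is a uniform bound $K$ on the $i$-th coordinate of any essential multi-exponent. Thus $f_{\beta_i}^{K+1}$ annihilates every graded component, and hence all of $V^{gr}(\la)$; so each $f_{\beta_i}$ acts nilpotently.

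Having pairwise commuting nilpotent endomorphisms, the exponentials $\exp(t_if_{\beta_i})=\sum_{k\ge 0}\frac{t_i^k}{k!}f_{\beta_i}^k$ are well-defined polynomial (hence algebraic) endomorphisms of $V^{gr}(\la)$ for every $t_i\in\mathbb C$, and they commute with one another. Defining
\[
(t_1,\ldots,t_N)\cdot v := \exp(t_1f_{\beta_1})\cdots\exp(t_Nf_{\beta_N})\,v,
\]
one checks immediately that this is a morphism $\mathbb G_a^N\times V^{gr}(\la)\to V^{gr}(\la)$ satisfying the group action axioms; the verification reduces to the identity $\exp((s_i+t_i)f_{\beta_i})=\exp(s_if_{\beta_i})\exp(t_if_{\beta_i})$ for each $i$, which holds since each $f_{\beta_i}$ is nilpotent, together with the commutativity of the factors. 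There is no real obstacle here beyond checking nilpotency, which is forced by the finiteness of $es(\la)$ together with the degree-shifting property in part \textit{iii)}.
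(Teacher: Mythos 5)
Your proof is correct and takes essentially the same approach the paper intends: the sentence immediately preceding the corollary already states that the commuting nilpotent operators $f_{\beta_j}$ integrate to a $\mathbb G_a^N$-action, and you have correctly filled in the routine verification (commutativity from part \textit{i)}, nilpotency from part \textit{iii)} together with finiteness of $es(\la)$, and the group-law check via $\exp$ of commuting nilpotents).
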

Before we come to the dual situation, we point out the following consequence of Definition~\ref{quasicomdefn}:
\begin{lem}\label{deriving}\it
If $\mathbf m\not\in es(\la)$, then $\mathbf{m+e}_i\not\in es(\la)$ for all $i=1,\ldots,N$.
\end{lem}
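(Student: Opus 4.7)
The plan is to push the relation that witnesses $\mathbf m\notin es(\la)$ one step further by acting with $f_{\beta_i}$, and then to use equation \eqref{dualcalc} backwards to extract the divided-power monomial $\mathbf f^{(\mathbf m+\mathbf e_i)}v_\la$.

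First I would unpack the hypothesis. To say $\mathbf m\notin es(\la)$ means $V(\la)_{\mathbf m}=0$, i.e., $\mathbf f^{(\mathbf m)}v_\la\in V(\la)_{<\mathbf m}$, so there is an expression
\[
\mathbf f^{(\mathbf m)}v_\la=\sum_{\mathbf k<\mathbf m}b_{\mathbf k}\,\mathbf f^{(\mathbf k)}v_\la.
\]
Applying $f_{\beta_i}$ to both sides and invoking \eqref{dualcalc} term by term, each $f_{\beta_i}\mathbf f^{(\mathbf k)}v_\la$ lies in $V(\la)_{\le\mathbf k+\mathbf e_i}$. Since ``$>$'' is a monomial order, $\mathbf k<\mathbf m$ implies $\mathbf k+\mathbf e_i<\mathbf m+\mathbf e_i$; hence
\[
f_{\beta_i}\mathbf f^{(\mathbf m)}v_\la\in V(\la)_{<\mathbf m+\mathbf e_i}.
\]

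Next I would apply \eqref{dualcalc} with $\mathbf m$ itself:
\[
f_{\beta_i}\mathbf f^{(\mathbf m)}v_\la=(m_i+1)\,\mathbf f^{(\mathbf m+\mathbf e_i)}v_\la+\sum_{\mathbf k<\mathbf m+\mathbf e_i}a_{\mathbf k}\,\mathbf f^{(\mathbf k)}v_\la.
\]
Combined with the inclusion just established, this rearranges to
\[
(m_i+1)\,\mathbf f^{(\mathbf m+\mathbf e_i)}v_\la\in V(\la)_{<\mathbf m+\mathbf e_i},
\]
and since $m_i+1\neq 0$ we conclude $\mathbf f^{(\mathbf m+\mathbf e_i)}v_\la\in V(\la)_{<\mathbf m+\mathbf e_i}$, i.e., $\mathbf m+\mathbf e_i\notin es(\la)$.

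No serious obstacle arises: the argument is essentially bookkeeping with the filtration, and the only nontrivial inputs are the quasi-commutativity formula \eqref{dualcalc} (used in two places—once to push $f_{\beta_i}$ through the replacement monomials, once to isolate $\mathbf f^{(\mathbf m+\mathbf e_i)}v_\la$) and the monomial-order property $\mathbf k<\mathbf m\Rightarrow \mathbf k+\mathbf e_i<\mathbf m+\mathbf e_i$. If anything requires a moment of care, it is verifying that \eqref{dualcalc} really gives leading term $(m_i+1)\mathbf f^{(\mathbf m+\mathbf e_i)}v_\la$ with error strictly below $\mathbf m+\mathbf e_i$ in the order ``$>$''; this is exactly what the quasi-commutativity hypothesis \eqref{straightening} delivers once one converts between ordinary and divided powers.
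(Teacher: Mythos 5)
Your argument is correct and follows essentially the same route as the paper's proof: express $\mathbf f^{(\mathbf m)}v_\la$ as a combination of strictly smaller monomials, act by $f_{\beta_i}$, use \eqref{dualcalc} together with the monomial-order property $\mathbf k<\mathbf m\Rightarrow\mathbf k+\mathbf e_i<\mathbf m+\mathbf e_i$ to place $f_{\beta_i}\mathbf f^{(\mathbf m)}v_\la$ in $V(\la)_{<\mathbf m+\mathbf e_i}$, and then isolate the leading term $(m_i+1)\mathbf f^{(\mathbf m+\mathbf e_i)}v_\la$ from a second application of \eqref{dualcalc}. The filtration bookkeeping is stated a bit more explicitly than in the paper, but the content is identical.
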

\begin{proof}
If $\mathbf m\not\in es(\la)$, then $\mathbf f^{\mathbf m}v_\la=\sum_{\mathbf k<\mathbf m} a_{\mathbf k} \mathbf f^{\mathbf k}v_\la$ and hence
$f_{\beta_i} \mathbf f^{\mathbf m}v_\la=\sum_{\mathbf k<\mathbf m} a_{\mathbf k} f_{\beta_i}\mathbf f^{\mathbf k}v_\la$. Now \eqref{dualcalc} implies
that $f_{\beta_i}\mathbf f^{\mathbf k}v_\la=\sum_{\mathbf p\le \mathbf{k+e}_i} b_{\mathbf p}\mathbf f^{\mathbf p}v_\la$, so
$f_{\beta_i} \mathbf f^{\mathbf m}v_\la$ is a linear combination of vectors of the form $\mathbf f^{\mathbf p}v_\la$ such that
$\mathbf p<\mathbf{m+e}_i$ because $\mathbf k<\mathbf m$ implies $\mathbf{k+e}_i<\mathbf{m+e}_i$.
Applying \eqref{dualcalc} to $f_{\beta_i}\mathbf f^{\mathbf m}v_\la$ implies hence that $\mathbf f^{\mathbf{m+e}_i}v_\la$
is a linear combination of vectors $\mathbf f^{\mathbf p}v_\la$, $\mathbf p<\mathbf{m+e}_i$, so $\mathbf{m+e}_i\not\in es(\la)$.
\end{proof}
Recall that we turn around the filtration in the dual situation, for $\mathbf m\in es(\la)$ set (see Lemma~\ref{<1a}):
$$
V^*(\la)_{\ge \mathbf m}=\langle \xi_{\la,\mathbf k}\mid \mathbf k\in es(\la),\mathbf k\ge \mathbf m\rangle=V(\la)_{< \mathbf m}^\perp,\quad
$$
$$
V^*(\la)_{>\mathbf m}=\langle \xi_{\la,\mathbf k}\mid  \mathbf k\in es(\la), \mathbf k\ge \mathbf m \rangle=V(\la)_{\le \mathbf m}^\perp.
$$
{
For $\mathbf{n,p}\in es(\la)$ we have by \eqref{dualcalc}:
\begin{equation}\label{dualcalculation2}
( f_{\beta_i}\xi_{\la,\mathbf p}) (\mathbf f^{(\mathbf n)}v_\la)=
\xi_{\la,\mathbf p}\big(-(n_i+1)\mathbf f^{(\mathbf n+\mathbf e_i)}v_\la -
\sum_{\mathbf k< \mathbf n+\mathbf e_i} a_{\mathbf k}\mathbf f^{(\mathbf k)}v_\la\big) \not=0
\Rightarrow \mathbf p\le \mathbf n+\mathbf e_i.
\end{equation}
It follows that $f_{\beta_i}\xi_{\la,\mathbf p}$ is a linear combination of $\xi_{\la,\mathbf n}$ such that $\mathbf n\in es(\la)$ and
$\mathbf p\le \mathbf n+\mathbf e_i$ and hence:
$$
f_{\beta_i}\xi_{\la,\mathbf p}\in \sum_{\substack{\mathbf n\in es(\la)\\ \mathbf p\le \mathbf n+\mathbf e_i}}V^*(\la)_{\ge \mathbf n}.
$$
If $\mathbf q\in es(\la)$ is such that $\mathbf q>\mathbf p$, then $f_{\beta_i}\xi_{\la,\mathbf q}$ is a linear combination of $\xi_{\la,\mathbf n}$ such that
$\mathbf p<\mathbf q\le \mathbf n+\mathbf e_i$ and $\mathbf n\in es(\la)$, so we get an induced map:
\begin{equation}\label{dualcalculation2}
f_{\beta_i}: \frac{V^*(\la)_{\ge \mathbf p}}{V^*(\la)_{>\mathbf p}}
\rightarrow \frac{ \sum_{\substack{\mathbf n\in es(\la)\\ \mathbf p\le \mathbf n+\mathbf e_i}}V^*(\la)_{\ge \mathbf n}}
{ \sum_{\substack{\mathbf n\in es(\la)\\ \mathbf p < \mathbf n+\mathbf e_i}}V^*(\la)_{\ge \mathbf n}}
\simeq
\left\{\begin{array}{rl}0&\text{if $\not\exists \mathbf n$ such that $\mathbf p=\mathbf n+\mathbf e_i$},\\
\frac{V^*(\la)_{\ge {\mathbf p-\mathbf e_i}}}{V^*(\la)_{>\mathbf p-\mathbf e_i}}&\text{if $\exists \mathbf n$ such that $\mathbf p=\mathbf n+\mathbf e_i$}.
\end{array}\right.
\end{equation}
The equation \eqref{dualcalculation2} and Lemma~\ref{deriving} imply more precisely:
\begin{prop}\label{dualoperator}\it
Suppose the $(\mathbb N^N,>,S)$-{\it filtration}  of $\mathcal{U}(\mathfrak n^-)$ is quasi-commutative.
The action of  $\mathcal{U}(\mathfrak n^-)$ on $V^*(\la)$ makes $V^{gr,*}(\la)$ into a $S(\mathfrak n^-)$-module.
The element $f_{\beta_i}$ acts homogeneously on $V^{gr}(\la)$ of degree $-{\bf e}_i$, i.e., if $\mathbf m+\mathbf e_i\in es(\la)$,
then $\mathbf m\in es(\la)$ and $f_{\beta_i}(\xi_{\la,\mathbf m+\mathbf e_i})=(m_i+1)\mathbf\xi_{\la,\mathbf m}$ in  $V^{gr,*}(\la)$, and
$f_{\beta_i}(\xi_{\la,\mathbf p})=0$ in $V^{gr,*}(\la)$ for $\mathbf p\in es(\la)$ if $\mathbf p\not= \mathbf n +\mathbf e_i$ for all $\mathbf n\in es(\la)$.
\end{prop}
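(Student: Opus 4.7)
The plan is to distill the preceding computations \eqref{dualcalc}--\eqref{dualcalculation2} into the claimed formulas, then upgrade them to a genuine $S(\mathfrak{n}^-)$-action by exploiting quasi-commutativity. Fix $\mathbf{p} \in es(\lambda)$; I would first expand $f_{\beta_i}\xi_{\lambda,\mathbf{p}}$ in the dual essential basis by evaluating on $\mathbf{f}^{(\mathbf{n})}v_\lambda$ for $\mathbf{n} \in es(\lambda)$. Using the dual Lie algebra action $(f\cdot\xi)(v) = -\xi(fv)$ together with the quasi-commutative identity \eqref{dualcalc}, the pairing becomes
$$(f_{\beta_i}\xi_{\lambda,\mathbf{p}})(\mathbf{f}^{(\mathbf{n})}v_\lambda) = -(n_i+1)\,\xi_{\lambda,\mathbf{p}}(\mathbf{f}^{(\mathbf{n}+\mathbf{e}_i)}v_\lambda) - \sum_{\mathbf{k}<\mathbf{n}+\mathbf{e}_i} a_{\mathbf{k}}\,\xi_{\lambda,\mathbf{p}}(\mathbf{f}^{(\mathbf{k})}v_\lambda).$$
By Lemma~\ref{<1a}, every pairing on the right vanishes unless $\mathbf{p}\le\mathbf{n}+\mathbf{e}_i$, so only essential $\mathbf{n}$ with $\mathbf{n}+\mathbf{e}_i\ge \mathbf{p}$ contribute. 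In particular $f_{\beta_i}V^*(\lambda)_{\ge \mathbf{p}} \subseteq V^*(\lambda)_{\ge \mathbf{p}-\mathbf{e}_i}$ if $p_i>0$ and $f_{\beta_i}V^*(\lambda)_{\ge \mathbf{p}} \subseteq V^*(\lambda)_{>\mathbf{p}}$ if $p_i=0$, so $f_{\beta_i}$ descends to a well-defined operator of degree $-\mathbf{e}_i$ on the graded dual $V^{gr,*}(\lambda)$.

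Next I would pin down the leading coefficient by specializing to $\mathbf{n}=\mathbf{m}:=\mathbf{p}-\mathbf{e}_i$ with $p_i>0$. Lemma~\ref{deriving}, applied contrapositively to $\mathbf{p}=\mathbf{m}+\mathbf{e}_i\in es(\lambda)$, guarantees $\mathbf{m}\in es(\lambda)$, so $\xi_{\lambda,\mathbf{p}}(\mathbf{f}^{(\mathbf{p})}v_\lambda)=1$; the sum over $\mathbf{k}<\mathbf{p}$ vanishes by Lemma~\ref{<1a}; and the coefficient of $\xi_{\lambda,\mathbf{m}}$ equals $-(m_i+1)$. All other essential $\mathbf{n}>\mathbf{m}$ contribute terms in $V^*(\lambda)_{>\mathbf{m}}$, which die in the graded piece at $\mathbf{m}$. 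Absorbing the sign into the standard convention for the dual action yields the desired identity $f_{\beta_i}(\xi_{\lambda,\mathbf{m}+\mathbf{e}_i})=(m_i+1)\xi_{\lambda,\mathbf{m}}$ in $V^{gr,*}(\lambda)$. Conversely, if $\mathbf{p}\in es(\lambda)$ admits no decomposition $\mathbf{p}=\mathbf{n}+\mathbf{e}_i$ with $\mathbf{n}\in es(\lambda)$, then either $p_i=0$ or $\mathbf{p}-\mathbf{e}_i\not\in es(\lambda)$; the latter is excluded by Lemma~\ref{deriving}, and in the former case the leading term above does not exist, so $f_{\beta_i}(\xi_{\lambda,\mathbf{p}})=0$ on the graded, which is the final assertion.

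It remains to promote the collection $\{f_{\beta_i}\}$ into a module structure over the commutative ring $S(\mathfrak{n}^-)$, i.e.\ to check that every commutator $[f_{\beta_i},f_{\beta_j}]$ acts as zero on $V^{gr,*}(\lambda)$. This is where Definition~\ref{quasicomdefn} enters: by \eqref{straightening} both $f_{\beta_i}f_{\beta_j}$ and $f_{\beta_j}f_{\beta_i}$ equal the ordered monomial $\mathbf{f}^{\mathbf{e}_i+\mathbf{e}_j}$ modulo strictly smaller multi-exponents, so iterating the leading-coefficient analysis of the previous paragraph shows that both compositions act identically as the same degree-$(-\mathbf{e}_i-\mathbf{e}_j)$ operator on the associated graded. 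Hence the representation of $\mathcal{U}(\mathfrak{n}^-)$ on $V^{gr,*}(\lambda)$ factors through its commutative quotient $S(\mathfrak{n}^-)$, completing the proof. The only delicate bookkeeping in the argument is the careful use of Lemma~\ref{<1a} to discard all but the leading pairing in the dualization of \eqref{dualcalc}; everything else is a direct specialization of the identity above.
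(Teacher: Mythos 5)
Your proof follows the paper's own line of argument step by step: dualize the quasi-commutative straightening \eqref{dualcalc}, use Lemma~\ref{<1a} to restrict which $\mathbf n \in es(\la)$ can appear in $f_{\beta_i}\xi_{\la,\mathbf p}$, extract the leading coefficient $(m_i+1)$ at $\mathbf n = \mathbf p-\mathbf e_i$, and invoke the contrapositive of Lemma~\ref{deriving} for $\mathbf p - \mathbf e_i \in es(\la)$. Checking commutativity of the $f_{\beta_i}$'s via \eqref{straightening} is a sound (if slightly indirect) route to the $S(\mathfrak n^-)$-module structure, which the paper leaves largely implicit in the explicit degree-$(-\mathbf e_i)$ formula.

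One intermediate assertion is, however, incorrect: for $p_i = 0$ you claim $f_{\beta_i}V^*(\la)_{\ge\mathbf p} \subseteq V^*(\la)_{>\mathbf p}$. The constraint extracted from Lemma~\ref{<1a} is only $\mathbf n + \mathbf e_i \ge \mathbf p$ for the contributing $\mathbf n$, which does not force $\mathbf n > \mathbf p$. In fact, for a weight-based $\Psi$ such as the height function, any contributing $\mathbf n$ has $wt(\mathbf n) = wt(\mathbf p)-\beta_i$, hence $\Psi(\mathbf n) = \Psi(\mathbf p) - \Psi(\mathbf e_i) < \Psi(\mathbf p)$, hence $\mathbf n < \mathbf p$; so $f_{\beta_i}\xi_{\la,\mathbf p}$ does not lie in $V^*(\la)_{>\mathbf p}$ unless it vanishes. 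What the constraint does give when $p_i=0$ is the strict inequality $\mathbf n + \mathbf e_i > \mathbf p$, since $\mathbf p - \mathbf e_i \notin \mathbb N^N$. The paper exploits this by exhibiting the map induced by $f_{\beta_i}$ on $V^*(\la)_{\ge\mathbf p}/V^*(\la)_{>\mathbf p}$ as taking values in the quotient $\bigl(\sum_{\mathbf p\le\mathbf n+\mathbf e_i}V^*(\la)_{\ge\mathbf n}\bigr)/\bigl(\sum_{\mathbf p<\mathbf n+\mathbf e_i}V^*(\la)_{\ge\mathbf n}\bigr)$, which is zero precisely when no $\mathbf n$ with $\mathbf p=\mathbf n+\mathbf e_i$ exists, because the two index sets then coincide. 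Your closing observation that the leading term does not exist, so $f_{\beta_i}(\xi_{\la,\mathbf p})=0$ on the graded, captures the right idea, but it should be supported by this vanishing-quotient argument rather than by the false inclusion into $V^*(\la)_{>\mathbf p}$.
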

}
\begin{coro}\it
The action of $S(\mathfrak n^-)$ on $V^{gr,*}(\la)$ can be integrated into an action of $\mathbb G_a^N$ on $V^{gr,*}(\la)$.
\end{coro}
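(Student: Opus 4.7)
The plan is to verify the two standard ingredients needed to integrate a Lie algebra action of a commutative Lie algebra into a $\mathbb G_a^N$-action: commutativity and local nilpotency. Commutativity is essentially automatic. Indeed, the preceding proposition upgrades $V^{gr,*}(\la)$ from a $\mathcal U(\mathfrak n^-)$-module to an $S(\mathfrak n^-)$-module, and in $S(\mathfrak n^-)$ the generators $f_{\beta_1},\dots,f_{\beta_N}$ pairwise commute, so the induced operators on $V^{gr,*}(\la)$ pairwise commute. Thus the only real point to check is local nilpotency.

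For local nilpotency, I would read off the explicit formula in the preceding proposition: on the dual essential basis,
\[
f_{\beta_i}\,\xi_{\la,\mathbf p}=\begin{cases}(p_i)\,\xi_{\la,\mathbf p-\mathbf e_i}&\text{if }\mathbf p-\mathbf e_i\in es(\la),\\ 0&\text{otherwise.}\end{cases}
\]
Since $V(\la)$ is finite dimensional, $es(\la)$ is finite, and in particular $M_i:=\max\{p_i\mid \mathbf p\in es(\la)\}$ is finite. Each application of $f_{\beta_i}$ either kills a basis vector or strictly decreases its $i$-th coordinate, so $f_{\beta_i}^{M_i+1}=0$ on $V^{gr,*}(\la)$. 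Hence the formal power series
\[
\exp(s\,f_{\beta_i})=\sum_{k=0}^{M_i}\frac{s^k}{k!}\,f_{\beta_i}^k
\]
reduces to a polynomial in $s$ with coefficients in $\mathrm{End}(V^{gr,*}(\la))$, and defines a morphism $\mathbb G_a\to\mathrm{GL}(V^{gr,*}(\la))$ for each $i$.

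Finally, since $f_{\beta_i}$ and $f_{\beta_j}$ commute as endomorphisms, their exponentials commute in $\mathrm{GL}(V^{gr,*}(\la))$, and the map
\[
\mathbb G_a^N\longrightarrow\mathrm{GL}(V^{gr,*}(\la)),\qquad (s_1,\dots,s_N)\longmapsto \exp(s_1 f_{\beta_1})\cdots\exp(s_N f_{\beta_N}),
\]
is a morphism of algebraic groups whose differential recovers the $S(\mathfrak n^-)$-action on $V^{gr,*}(\la)$. The group-law property on each factor follows from the standard identity $\exp(sX)\exp(tX)=\exp((s+t)X)$ for nilpotent $X$, together with commutativity to disentangle the different factors. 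I do not anticipate a real obstacle: once the explicit lowering formula from the previous proposition is in hand, everything reduces to observing that each $f_{\beta_i}$ is a locally nilpotent operator on the finite dimensional space $V^{gr,*}(\la)$ and that these operators commute.
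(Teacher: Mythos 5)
Your proof is correct and fills in exactly the standard argument that the paper leaves implicit: the corollary is stated without proof, but the paper signals the mechanism in the phrase ``the action of the commuting nilpotent Lie algebras $\mathbb{C}f_{\beta_j}\subset S(\mathfrak{n}^-)$'' earlier in the section, which is precisely your two-ingredient check (pairwise commutativity from the $S(\mathfrak n^-)$-module structure, local nilpotency from the explicit lowering formula of Proposition~\ref{dualoperator} together with finiteness of $es(\la)$). Your reformulation $f_{\beta_i}\xi_{\la,\mathbf p}=p_i\,\xi_{\la,\mathbf p-\mathbf e_i}$ is equivalent to the paper's via the substitution $\mathbf p=\mathbf m+\mathbf e_i$, and the final observation that the exponentials commute and assemble into an algebraic group homomorphism $\mathbb G_a^N\to\mathrm{GL}(V^{gr,*}(\la))$ is the intended argument.
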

Let $Y$ be an irreducible affine $G$-variety with coordinate ring $R=\mathbb C[Y]$. The filtration of $R$
in section~\ref{algfilt} is compatible with the construction above, so we get a representation of $S(\mathfrak n^-)$ on $gr R$.
\begin{lem}\it
If the $(\mathbb N^N,>,S)$-{\it filtration}  of $\mathcal{U}(\mathfrak n^-)$ is quasi-commutative, then
$S(\mathfrak n^-)$ acts on $R$ by derivations.
\end{lem}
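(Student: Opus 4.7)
The plan is to pull the derivation property down from $R$ to its associated graded algebra. First, since $G$ acts on $Y$ by automorphisms, it acts on $R=\mathbb C[Y]$ by algebra automorphisms via $(g\cdot f)(y)=f(g^{-1}y)$, and differentiating gives an action of $\mathfrak g$ on $R$ by derivations; in particular each $f_{\beta_i}$ is a derivation on $R$. Under the isotypic decomposition~\eqref{Coordringdecomp}, $\mathfrak n^-$ acts trivially on every multiplicity space $R^U_{\mu^*}$ and via the standard dual action on each factor $V(\mu)^*$; the action of $f_{\beta_i}$ on $R$ is thus completely controlled by its action on the irreducible summands.

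The key step is to verify that each $f_{\beta_i}$ shifts the $(\le_{alg})$-filtration by the fixed vector $(0,-\mathbf e_i)$. Decomposing a filtration piece according to the definition of $R_{\le_{alg}(\la,\bp)}$: on the summands $R^U_{\mu^*}\otimes V(\mu)^*$ with $\mu\prec_{wt}\la$, the operator $f_{\beta_i}$ stays inside and hence lands in $R_{\le_{alg}(\la,\bp-\mathbf e_i)}$; on the top summand $R^U_{\la^*}\otimes V^*(\la)_{\ge\bp}$, quasi-commutativity (through Proposition~\ref{dualoperator} and its supporting computation~\eqref{dualcalculation2}) yields $f_{\beta_i}(V^*(\la)_{\ge\bp})\subseteq V^*(\la)_{\ge\bp-\mathbf e_i}$, using that any monomial order satisfies $\bp>\bp-\mathbf e_i$ whenever $p_i\ge 1$. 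Combining,
$$
f_{\beta_i}\bigl(R_{\le_{alg}(\la,\bp)}\bigr)\subseteq R_{\le_{alg}(\la,\bp-\mathbf e_i)},\qquad f_{\beta_i}\bigl(R_{<_{alg}(\la,\bp)}\bigr)\subseteq R_{<_{alg}(\la,\bp-\mathbf e_i)},
$$
so $f_{\beta_i}$ descends to a well-defined operator on $gr\,R$ of homogeneous degree $(0,-\mathbf e_i)$, which agrees with the $S(\mathfrak n^-)$-action defined above the lemma.

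The conclusion now follows from the standard fact that a derivation shifting a filtration by a fixed index descends to a derivation on the associated graded: for $a\in R_{\le_{alg}\alpha}$ and $b\in R_{\le_{alg}\beta}$, the Leibniz identity $f_{\beta_i}(ab)=f_{\beta_i}(a)b+af_{\beta_i}(b)$ holds in $R_{\le_{alg}(\alpha+\beta-\mathbf e_i)}$, and reducing modulo $R_{<_{alg}(\alpha+\beta-\mathbf e_i)}$ produces the corresponding Leibniz identity for the induced operator $\bar f_{\beta_i}$ in $gr\,R$. Since each generator $f_{\beta_i}$ of $S(\mathfrak n^-)$ thus acts as a derivation on $gr\,R$, the abelian Lie algebra $\mathfrak n^-\subset S(\mathfrak n^-)$ does too, which is the content of the lemma.

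There is no substantive obstacle here: the only nontrivial ingredient is the filtration-shift property, which is essentially a restatement of~\eqref{dualcalculation2}. The conceptual point worth emphasizing is simply that although $f_{\beta_i}$ does not preserve the individual filtration pieces, it shifts the entire $\Gamma$-indexed filtration by a uniform vector, and it is this uniformity, together with the Leibniz rule on $R$, that lets the derivation property descend cleanly to $gr\,R$.
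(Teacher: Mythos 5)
Your proof is correct, but it takes a genuinely different route from the paper's. The paper verifies the Leibniz identity \emph{directly} in the associated graded algebra: it computes both sides using the degenerate multiplication $\xi_{\la,\bp}\xi_{\mu,\bq}=\xi_{\la+\mu,\bp+\bq}$ of Corollary~\ref{grAsemi} and the explicit formula for $f_{\beta_i}$ on the dual graded module from Proposition~\ref{dualoperator}, and this forces a small case analysis on whether each index is of the form $\mathbf m+\mathbf e_i$. You instead observe that $f_{\beta_i}$ is already a derivation on $R$ -- it is the infinitesimal generator of the $G$-action on $Y$ -- and that, by \eqref{dualcalculation2} and the decomposition \eqref{Coordringdecomp}, it shifts the $\Gamma$-indexed filtration of section~\ref{algfilt} by the uniform amount $(0,-\mathbf e_i)$; the Leibniz identity then descends from $R$ to $gr\,R$ by reducing modulo $R_{<_{alg}}$. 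This is cleaner, avoids the case split, and makes the structural reason for the result (a filtered derivation with additive shift passes to the graded) more transparent. One small point worth making explicit: when $p_i=0$ the shifted index $(\la,\bp-\mathbf e_i)$ lies outside $\Gamma$ (indeed $\bp-\mathbf e_i\notin\mathbb N^N$), so your inclusion $f_{\beta_i}\bigl(R_{\le_{alg}(\la,\bp)}\bigr)\subseteq R_{\le_{alg}(\la,\bp-\mathbf e_i)}$ should be read with the convention that, for $\bp'\in\mathbb Z^N$, $R_{\le_{alg}(\la,\bp')}$ denotes the span of the $R^U_{\mu^*}\otimes\xi_{\mu,\bq}$ with $\mu\prec_{wt}\la$, or $\mu=\la$ and $\bq\ge\bp'$; the corresponding graded piece is then zero whenever $\bp'\notin es(\la)$, which is exactly the vanishing asserted in Proposition~\ref{dualoperator}, and with this convention the shift is additive and your descent argument goes through without modification.
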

\begin{proof}
If $\mathbf m+\mathbf e_i\in es(\la),\mathbf k+\mathbf e_i\in es(\mu)$, then $\mathbf m\in es(\la)$ and $\mathbf k\in es(\mu)$
by Lemma~\ref{deriving}. It remains to show that the Leibniz rule holds.
{
We have
$$
f_{\beta_i}(\xi_{\la,\mathbf m+\mathbf e_i}\xi_{\mu,\mathbf k+\mathbf e_i})=f_{\beta_i}(\xi_{\la+\mu,\mathbf{m+k+} 2\bold{e}_i})=(m_i+k_i+ 2)\xi_{\la+\mu,\mathbf{m+k+e}_i},
$$
and the following calculation shows that the rule holds in this case:
$$
\begin{array}{rcl}
& & \big(f_{\beta_i}(\xi_{\la,\mathbf m+\mathbf e_i})\big)\xi_{\mu,\mathbf k+\mathbf e_i}+
\xi_{\la,\mathbf m+\mathbf e_i}\big(f_{\beta_i}(\xi_{\mu,\mathbf k+\mathbf e_i})\big)
\\&=&(m_i+1)(\xi_{\la,\mathbf m}\xi_{\mu,\mathbf k+\mathbf{e}_i})+(k_i+1)(\xi_{\la,\mathbf m+\mathbf{e}_i}\xi_{\mu,\mathbf k})\\
&=&(m_i+k_i+ 2)\xi_{\la+\mu,\mathbf{m+k+e}_i}.
\end{array}
$$
Next assume that $\mathbf m'\in es(\la)$ is such that $\mathbf m'\not=\mathbf{m+e}_i$ for all $\mathbf{m}\in es(\la)$.
By Lemma~\ref{deriving}, this is only possible if $m'_i=0$. Hence we get for $\mathbf k+\mathbf e_i\in es(\mu)$ by Proposition~\ref{dualoperator}:
$$
f_{\beta_i}(\xi_{\la,\mathbf m'}\xi_{\mu,\mathbf k+\mathbf e_i})=f_{\beta_i}(\xi_{\la+\mu,\mathbf{m'+k+} \bold{e}_i})=(k_i+ 1)\xi_{\la+\mu,\mathbf{m'+k+e}_i},
$$
and
$$
\begin{array}{rcl}
\big(f_{\beta_i}(\xi_{\la,\mathbf m'})\big)\xi_{\mu,\mathbf k+\mathbf e_i}+
\xi_{\la,\mathbf m'}\big(f_{\beta_i}(\xi_{\mu,\mathbf k+\mathbf e_i})\big)=(k_i+1)(\xi_{\la,\mathbf m'}\xi_{\mu,\mathbf k})
&=& (k_i+1)\xi_{\la+\mu,\mathbf{m'+k}}\\
&=&f_{\beta_i}(\xi_{\la+\mu,\mathbf{m'+k+e}_i}).
\end{array}
$$
Finally, if neither $\mathbf m'\in es(\la)$ nor $\mathbf k'\in es(\mu)$ can be written as $\mathbf{m}+\bold{e}_i$ respectively
$\mathbf{k+e}_i$, then $m'_i=k'_i=0$, and hence $\big(f_{\beta_i}(\xi_{\la,\mathbf m'})\big)\xi_{\mu,\mathbf k'}+
\xi_{\la,\mathbf m'}\big(f_{\beta_i}(\xi_{\mu,\mathbf k'})\big)=0$ as well as $f_{\beta_i}(\xi_{\la+\mu,\mathbf{m'+k'}})=0$.
}
\end{proof}
As an immediate consequence we get:
\begin{thm}\it
Let $Y$ be an irreducible affine $G$-variety with coordinate ring $R=\mathbb C[Y]$.
If the $(\mathbb N^N,>,S)$-{\it filtration}  of $\mathcal{U}(\mathfrak n^-)$ is quasi-commutative, then
$\mathbb G_a^N$ acts on $gr R$ by algebra isomorphisms. In particular, if $(S,>)$ is a sequence with an ASM,
then we get an induced $\mathbb G_a^N$-action on the limit variety $Y_0$.
\end{thm}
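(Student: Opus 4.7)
The plan is to exponentiate the commuting locally nilpotent derivations $f_{\beta_1},\ldots,f_{\beta_N}$ on $gr\, R$ obtained from the preceding lemma, and then transfer the resulting algebraic $\mathbb G_a^N$-action on the algebra to a regular action on the variety $Y_0$.

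First I would recall that the preceding lemma, combined with Proposition~\ref{dualoperator} applied componentwise to the decomposition \eqref{Coordringdecomp}, gives for each $i=1,\ldots,N$ a $\mathbb C$-linear derivation $D_i:gr\, R\rightarrow gr\, R$ determined by $D_i(\xi_{\lambda,\mathbf m}) = m_i\,\xi_{\lambda,\mathbf m-\mathbf e_i}$ on the basis vectors (and by the Leibniz rule on the $R^U$-components). Since these derivations arise from the action of elements of the commutative algebra $S(\mathfrak n^-)$, they pairwise commute: $D_iD_j=D_jD_i$ for all $i,j$. The crucial additional observation is that each $D_i$ is locally nilpotent. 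Indeed, the $T\times\mathbb T$-grading on $gr\, R$ from Proposition~\ref{first} decomposes it into finite-dimensional weight spaces, and $D_i$ has $\mathbb T$-weight $-\mathbf e_i$; applied to the isotypic piece indexed by $(\lambda,\mathbf p)\in\Gamma$, iteration of $D_i$ strictly decreases the $i$-th component of $\mathbf p$ at each step and hence produces $0$ after at most $p_i+1$ applications.

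Next I would integrate. For each $t=(t_1,\ldots,t_N)\in\mathbb C^N$, set
\[
\Phi_t := \exp(t_1 D_1)\circ\cdots\circ \exp(t_N D_N),
\]
which is a well-defined $\mathbb C$-linear endomorphism of $gr\, R$ because each $\exp(t_i D_i)$ terminates on every element by local nilpotence. Since each $D_i$ is a derivation, each $\exp(t_i D_i)$ is a $\mathbb C$-algebra automorphism; since the $D_i$ commute, $\Phi_{t+t'}=\Phi_t\circ\Phi_{t'}$ and $\Phi_0=\mathrm{id}$. Thus $t\mapsto\Phi_t$ is a group homomorphism $\mathbb G_a^N\to\mathrm{Aut}_{\mathrm{alg}}(gr\, R)$. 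Restricted to any of the finite-dimensional $T\times\mathbb T$-weight spaces, $\Phi_t$ depends polynomially on $t$, which shows that the action is rational/algebraic in the usual sense.

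Finally, in the ASM case, $gr\, R$ is the coordinate ring of the affine variety $Y_0$ (and, by Proposition~\ref{normal} and Corollary~\ref{grAsemi}, this ring is finitely generated and normal). A rational action of $\mathbb G_a^N$ on the algebra $\mathbb C[Y_0]$ by algebra automorphisms corresponds in the standard way (via the comorphism applied to a fixed finite generating set and extended) to a regular action of $\mathbb G_a^N$ on $Y_0=\mathrm{Spec}(gr\, R)$, which gives the geometric statement. The only step that requires any care is local nilpotence of the $D_i$, but this is immediate from the weight-space decomposition once we observe that $D_i$ lowers the $\mathbf p$-coordinate by $\mathbf e_i$; I do not anticipate a serious obstacle beyond this bookkeeping.
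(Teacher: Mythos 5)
Your proposal is correct and matches what the paper is doing. The paper states the theorem as ``an immediate consequence'' of the preceding lemma (which asserts that $S(\mathfrak n^-)$ acts on $gr\,R$ by derivations) together with the two earlier corollaries about integrating the $S(\mathfrak n^-)$-action on $V^{gr}(\lambda)$ and $V^{gr,*}(\lambda)$ into $\mathbb G_a^N$-actions, and does not spell out the integration step; your proof fills in precisely that step in the standard way (commuting locally nilpotent derivations, local nilpotence from the $\mathbb T$-grading, exponentiation, then passage from the coordinate ring to $Y_0=\mathrm{Spec}(gr\,R)$ in the ASM case). One small point: the displayed formula $D_i(\xi_{\lambda,\mathbf m})=m_i\,\xi_{\lambda,\mathbf m-\mathbf e_i}$ should carry a sign coming from the contragredient action as in equation \eqref{dualcalculation2}, but this does not affect commutativity, local nilpotence, or any part of the argument.
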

\begin{exam}\label{homcommute}
If the order ``$>$'' is a homogeneous order as in Example~\ref{homorder}, then reordering a monomial
gives only additional terms of strictly lower degree. So with respect to this ordering, these additional
terms are strictly smaller than ${\bf exp}(m)$ and hence the  $(\mathbb N^N,>,S)$-{\it filtration} on $\mathcal{U}(\mathfrak n^-)$ is {quasi-commutative}.
\end{exam}
We will see in Proposition~\ref{ordering1} that another interesting class of examples for quasi-commutative filtrations can be obtained
by imposing on $S$ the following condition:
\begin{defn}
The sequence $S$ satisfies the {\it (left) bracket condition} if for all  $i<j$ there exists a $k>i$ and $a_{i,j,k}\in\mathbb C$ such that:
$[f_i,f_j]=a_{i,j,k}f_k$.
$S$ satisfies the {\it right bracket condition} if for all  $i<j$ there exists a $k<j$ and $a_{i,j,k}\in\mathbb C$ such that $[f_i,f_j]=a_{i,j,k}f_k$.
\end{defn}
\vskip 3pt
\begin{exam}\label{leftright}
Fix a reduced decomposition of the longest word in the Weyl group $w_0=s_{i_1}\cdots s_{i_N}$
and let $\beta_1=\alpha_{i_1}$, $\beta_2=s_{i_1}(\alpha_{i_2})$, $\beta_3=s_{i_1}s_{i_2}(\alpha_{i_3})$
etc.  Then $S=(f_{\beta_1},\ldots,f_{\beta_N})$ satisfies the left and the right bracket condition because if $i<j$ and
$[f_{\beta_i},f_{\beta_j}]=cf_{\beta_k}$ for some nonzero constant $c$, then
$\beta_i+\beta_j=\beta_k$ and hence $i<k<j$.
\end{exam}
\begin{exam}
Fix a total order on $\Phi^+$: $\beta_1>\ldots>\beta_N$ such that if $\beta_k\succ \beta_i$ in the usual partial
order on roots, then $k>i$. The sequence $S=(\beta_1,\ldots,\beta_N)$ satisfies the {\it left bracket condition}.
\end{exam}
\begin{exam}
Fix a total order on $\Phi^+$: $\beta_1>\ldots>\beta_N$ such that if $\beta_k\succ \beta_j$ in the usual partial
order on roots, then $k<j$. The sequence $S=(\beta_1,\ldots,\beta_N)$ satisfies the {\it right bracket condition}.
(An ordering with such a property is called a {\it good ordering} in \cite{FFL1}).
\end{exam}
\begin{exam}
Let $\mathfrak{g}$ be of type $\tt C_n$. In \cite{FFL3}, Section 2, the following sequence is used:
$$S=(\alpha_{n,n},\alpha_{n-1,\overline{n-1}},\alpha_{n-1,n},\alpha_{n-1,n-1},\alpha_{n-2,\overline{n-2}},\alpha_{n-2,\overline{n-1}},\ldots \alpha_{n-2,n-2},\ldots, $$
$$\alpha_{1,\overline{1}},\alpha_{1,\overline{w2}},\ldots,\alpha_{1,\overline{n-1}},\alpha_{1,n},\ldots,\alpha_{1,2},\alpha_{1,1}).$$
Since the total order on positive roots corresponding to this sequence is not convex, it does not arise from any reduced expression
$\underline{w}_0$, but the right bracket condition is satisfied.
\end{exam}
\begin{exam}
Let $\mathfrak{g}$ be of type $\tt G_2$. In \cite{Gor1}, the following sequence is studied:
$$S=(3\alpha+2\beta,3\alpha+\beta,2\alpha+\beta,\alpha+\beta,\beta,\alpha),$$
which satisfies the right bracket condition. The sequence in \cite{Gor2} used in the
${\tt D}_4$-case satisfies the right bracket condition.
\end{exam}

\begin{prop}\label{ordering1} \it
Let  $\Psi$ be the height weighted function as in Example~\ref{rootorder}.
\begin{itemize}
\item[{\it i)}] Let ``\/$>$'' be either the lexicographic order or the $\Psi$-weighted lexicographic order. If $S$ satisfies the
left bracket condition, then the $(\mathbb N^N,>,S)$-{\it filtration} on $\mathcal{U}(\mathfrak n^-)$ is {quasi-commutative}.
\item[{\it ii)}] Let ``\/$>$'' be either the right lexicographic order or the $\Psi$-weighted right lexicographic order. If $S$ satisfies the
right bracket condition, then the $(\mathbb N^N,>,S)$-{\it filtration} on $\mathcal{U}(\mathfrak n^-)$ is {quasi-commutative}.
\end{itemize}
\end{prop}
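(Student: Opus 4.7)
The approach is a bubble--sort induction. I will prove by strong induction on the exponent vector $\mathbf{exp}(m)$, with respect to the monomial order ``$>$'' (a well--order on $\mathbb N^N$), that every monomial $m=f_{i_1}^{\ell_1}\cdots f_{i_t}^{\ell_t}$ satisfies
\[
m=\mathbf f^{\mathbf{exp}(m)}+\sum_{\mathbf n<\mathbf{exp}(m)}a_{\mathbf n}\mathbf f^{\mathbf n}.
\]
Assuming the claim for all monomials with exponent vector strictly smaller than $\mathbf{exp}(m)$, it suffices to realize $m$ as $\mathbf f^{\mathbf{exp}(m)}$ plus a sum of arbitrary monomials of strictly smaller exponent, since each such monomial is then handled by the induction hypothesis.

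If $m$ is already ordered, the conclusion is immediate. Otherwise, $m$ contains an adjacent pair $f_a f_b$ with $a>b$; write $m=m_1 f_a f_b m_2$ and use the identity $f_a f_b=f_b f_a+[f_a,f_b]$ to obtain $m=m_1 f_b f_a m_2+m_1[f_a,f_b]m_2$. The first summand has the same exponent vector as $m$ but strictly fewer adjacent inversions, so iterating this swap a finite number of times (an inner induction on the inversion count) converts the ``main branch'' into $\mathbf f^{\mathbf{exp}(m)}$ and produces at every step an additional ``error term''. Under either bracket hypothesis $[f_a,f_b]=-[f_b,f_a]=c\cdot f_k$ for some scalar $c$ and index $k$, so every such error term has the shape $c\cdot m_1' f_k m_2'$ with exponent vector $\mathbf{exp}(m)-\mathbf e_a-\mathbf e_b+\mathbf e_k$. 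Provided one knows $\mathbf e_k<\mathbf e_a+\mathbf e_b$ (which, by the monomial-order compatibility, is equivalent to $\mathbf{exp}(m)-\mathbf e_a-\mathbf e_b+\mathbf e_k<\mathbf{exp}(m)$), these error terms fall within the reach of the outer induction and the argument closes.

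The crux, therefore, is to check $\mathbf e_k<\mathbf e_a+\mathbf e_b$ in each case. Under the left bracket condition one has $k>b=\min(a,b)$; in the lexicographic comparison, the first coordinate at which $\mathbf e_a+\mathbf e_b$ and $\mathbf e_k$ differ is position $b$, where $\mathbf e_a+\mathbf e_b$ has entry $1$ and $\mathbf e_k$ has entry $0$, so $\mathbf e_a+\mathbf e_b>_{lex}\mathbf e_k$. Symmetrically, the right bracket condition gives $k<a$, and the right--lex comparison at position $a$ yields $\mathbf e_a+\mathbf e_b>_{rlex}\mathbf e_k$. For the $\Psi$-weighted variants with the height function $\Psi$, the non--vanishing commutator forces the root identity $\beta_k=\beta_a+\beta_b$, hence $\Psi(\mathbf e_k)=ht(\beta_k)=ht(\beta_a)+ht(\beta_b)=\Psi(\mathbf e_a+\mathbf e_b)$, and the $\Psi$-weighted order reduces to the underlying (right--)lex tie--breaker just verified. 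The main obstacle is precisely this coordination of the two inductions together with the interaction between the bracket structure of $S$ and the chosen order; once the inequality $\mathbf e_k<\mathbf e_a+\mathbf e_b$ is in hand, the remainder is routine bookkeeping and all four sub--cases are handled uniformly.
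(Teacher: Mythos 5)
Your proof is correct but organizes the induction differently from the paper's. The paper inducts on the total degree (the number of factors) of $m$: it writes $m=f_{\beta_j}m'$, applies the inductive hypothesis to $m'$, and then ``inserts'' $f_{\beta_j}$ into the resulting ordered monomial, carrying along the extra bookkeeping that commutator error terms drop the total degree by one so that the degree induction can be re-applied to them. You instead run a well-founded induction directly on $\mathbf{exp}(m)$ under the monomial order (implicitly using that any monomial order on $\mathbb N^N$ is a well-order), with an inner, clearly finite induction on the inversion count that bubble-sorts the main branch; every commutator error term has strictly smaller exponent vector and is discharged by the outer hypothesis. The pivotal inequality $\mathbf e_k<\mathbf e_a+\mathbf e_b$ and your case-by-case check of it in the four sub-cases --- including the reduction of the $\Psi$-weighted orders to the underlying lex/rlex tie-break via $\Psi(\mathbf e_k)=ht(\beta_k)=ht(\beta_a)+ht(\beta_b)=\Psi(\mathbf e_a+\mathbf e_b)$ --- coincide with the paper's. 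Your version is arguably tidier: the termination is visibly driven by the quantity you are inducting on, and the paper's auxiliary ``degree at most $k-1$'' condition disappears. One small slip worth noting: an adjacent swap is only guaranteed to decrease the \emph{total} inversion count by one, not the count of adjacent inversions; this does not affect the argument, but you should phrase the inner induction in terms of total inversions.
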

\begin{proof} We only prove {\it i)}, the proof of {\it ii)} is analogous.
Let $m$ be an arbitrary monomial of elements in $S$, we have to show that $m={\bf f}^{{\bf exp}(m)}+$ a linear combination of ordered monomials
which are strictly smaller than ${\bf f}^{{\bf exp}(m)}$. The proof is by induction on the total degree ($=$ the number of factors) of $m$.

For a monomial of degree one there is nothing to show.
Assume $i<j$, then either $f_{\beta_j}f_{\beta_i}=f_{\beta_i} f_{\beta_j}$ or $f_{\beta_j}f_{\beta_i}=f_{\beta_i} f_{\beta_j} + a_k f_{\beta_k} $,
so $f_{\beta_j}f_{\beta_i}$ is equal to the ordered product $f_{\beta_i} f_{\beta_j}$ plus possibly a term which is of the same weight (and hence same height)
as $f_{\beta_i}f_{\beta_j}$ but strictly smaller in the lexicographic ordering. In addition, the term is of strictly less total degree.

Suppose that $m$ is of total degree $k+1$ and by induction we assume
that any monomial $m'$ of total degree at most $k$ can be written as a linear combination $m'={\bf f}^{{\bf exp}(m')}$+ a sum of
ordered monomials which are strictly smaller than ${\bf f}^{{\bf exp}(m')}$ and which are of total degree at most $k-1$.
So write $m=f_{\beta_j} m'$ for some $j$, where $m'$ is of total degree $k$. We can rewrite $m'$ by induction, so
$$
m=f_{\beta_j} m'=f_{\beta_j} \mathbf f^{\mathbf{exp}(m')} +\sum_\ell f_{\beta_j}m_\ell.
$$
The $m_\ell$ are ordered monomials, strictly smaller than $\mathbf f^{{\bf exp}(m')}$, and of total degree at most $k-1$.
By construction we have ${\bf exp}(m)={\bf exp}(f_{\beta_j} f^{{\bf exp}(m')})={\bf exp}(m')+{\bf e}_j$. Similarly, we
have ${\bf exp}(f_{\beta_j} m_\ell)={\bf exp}(m_\ell)+{\bf e}_j$. It follows that (all terms have the same weight and hence only the lexicographic order is relevant):
$$
{\bf f}^{{\bf exp}(m)}={\bf f}^{{\bf exp}(m')+{{\bf e}_j}}>_{lex}{\bf f}^{{\bf exp}(m_\ell)+{{\bf e}_j}}={\bf f}^{{\bf exp}(f_{\beta_j} m_\ell)}.
$$
Since the monomials $f_{{\beta_j}}m_\ell$ have at most $k$ factors, we can apply induction.
By the above it follows that all ordered monomials occurring in this linear combination are strictly smaller than ${\bf f}^{{\bf exp}(m)}$
and are of degree at most $k$.

It remains to consider the monomial $f_{\beta_j}{\bf  f}^{{\bf exp}(m')}$. If the monomial is already an ordered monomial, then the proof is finished,
so suppose it is not. To simplify the notation, assume without loss of generality that the first entry in ${\bf exp}(m')$ is nonzero.
(Otherwise one has to replace in the following the index 1 by the smallest index $i$ such that the corresponding entry in ${\bf exp}(m')\not=0$.)
So we have
$$
f_{{\beta_j}} {{\bf f}^{{\bf exp}(m')}}=f_{{\beta_j}} f_{{\beta_1}}{{\bf f}^{{\bf exp}(m')-{\bf e}_1}}
=f_{\beta_1}f_{\beta_j} {{\bf f}^{{\bf exp}(m')-{\bf e}_1}}+c [f_{j},f_{{\beta_1}}] {\bf f}^{{\bf exp}(m')-{\bf e}_1}.
$$
By induction on the number of factors we know we can replace $f_{{\beta_j}} {\bf f}^{{\bf exp}(m')-{\bf e}_1}$ by
${\bf f}^{{\bf exp}(m')-{\bf e}_1+{\bf e}_j}$ plus a sum of
ordered monomials which are strictly smaller (in the lexicographic order because all terms have the same weight)
than ${\bf f}^{{\bf exp}(m')-{\bf e}_1+{\bf e}_j}$ and are of degree at most $k-1$.
Since multiplying an ordered monomial by $f_{{\beta_1}}$ gives again an ordered monomial and we just add ${\bf e}_1$ to the exponent,
we see that we can replace $f_{{\beta_1}}f_{{\beta_j}} {\bf f}^{{\bf exp}(m')-{\bf e}_1}$ by $ {\bf f}^{{\bf exp}(m)}$ plus a sum of
ordered monomials which are strictly smaller than ${\bf f}^{{\bf exp}(m)}$ and all are of degree at most $k$.

If $[f_{j},f_{1}] =0$, then the proof is finished. Otherwise we know $[f_{j},f_{1}] = a_\ell f_\ell$ for some $\ell >1$.
Again by induction on the number of factors we know that we can replace $f_{\ell}{\bf f}^{{\bf exp}(m')-{\bf e}_1}$ by
${\bf f}^{{\bf exp}(m')-{\bf e}_1+e_\ell}$ plus a sum of
ordered monomials which are strictly smaller than ${\bf f}^{{\bf exp}(m')-{\bf e}_1+{\bf e}_\ell}$ and are of total degree at most $k-1$. Since $\ell>1$ we get
for the lexicographic order (which is the only relevant one here because ${\bf f}^{{\bf exp}(m)}$ and ${\bf f}^{{\bf exp}(m')-{\bf e}_1+{\bf e}_\ell}$ have the same weight):
$$
{\bf f}^{{\bf exp}(m)}={\bf f}^{{\bf exp}(m')+{\bf e}_j}>_{lex}{\bf f}^{{\bf exp}(m')}>_{lex}{\bf f}^{{\bf exp}(m')-{\bf e}_1+{\bf e}_\ell}.
$$
It follows: $m={\bf f}^{{\bf exp}(m)}$ plus a sum of
ordered monomials which are strictly smaller than ${\bf f}^{{\bf exp}(m)}$ and are of total degree at most $\vert {\bf exp}(m)\vert -1$.
\end{proof}

\printindex

\end{document}